\newtheoremstyle{slthm}
  {9pt}
  {5pt}
  {\slshape}
  {}
  {\bfseries}
  {.}
  {.5em}
  {\thmname{#1} \thmnumber{#2}{\rm \thmnote{ (#3)}}}
\newtheoremstyle{prcl}
  {9pt}
  {5pt}
  {\slshape}
  {}
  {\bfseries}
  {.}
  {.5em}
  {\thmname{#3} \thmnumber{ #2}}
\newtheoremstyle{slthmprime}
  {9pt}
  {5pt}
  {\slshape}
  {}
  {\bfseries}
  {.}
  {.5em}
  {\thmname{#1} \thmnumber{#2}${}^\prime${\rm \thmnote{ (#3)}}}
\theoremstyle{slthm}
\newtheorem{theorem}{Theorem}[section]
\newtheorem{lemma}[theorem]{Lemma}
\newtheorem{proposition}[theorem]{Proposition}
\newtheorem{corollaries}[theorem]{Corollaries}
\theoremstyle{slthmprime}
\newtheorem{Theorem}{Theorem}[section]
\theoremstyle{definition}
\newtheorem{definition}[theorem]{Definition}
\newtheorem{definitions}[theorem]{Definitions}
\newtheorem{example}[theorem]{Example}
\theoremstyle{remark}
\newtheorem{notation}[theorem]{Notation}
\newtheorem{remark}[theorem]{Remark}
\newenvironment{renumerate}
        {
         \begin{enumerate}}
        {\end{enumerate}}
\numberwithin{equation}{section}
\newcommand{\tprime}[1]{\ensuremath{{#1}^\prime}}
\newcommand{\A}{\mathcal{A}}
\newcommand{\B}{\mathcal{B}}
\newcommand{\C}{\mathcal{C}}
\newcommand{\F}{\mathcal{F}}
\renewcommand{\H}{\mathcal{H}} %
\renewcommand{\P}{\mathcal{P}} %
\renewcommand{\S}{\mathcal{S}}   %
\newcommand{\NN}{\mathbb{N}}
\newcommand{\QQ}{\mathbb{Q}}
\newcommand{\RR}{\mathbb{R}}
\newcommand{\PP}{\mathbb{P}}
\renewcommand{\bar}[1]{\overline{#1}} %
\newcommand{\tld}[1]{\widetilde{#1}}
\DeclareMathOperator{\im}{im}
\newcommand{\Restr}[1]{\big|_{#1}}
\newcommand{\PD}[3]{\frac{\partial^{#1}#2}{\partial {#3}^{#1}}}
\DeclareMathOperator{\cl}{cl}
\DeclareMathOperator{\bd}{bd}
\newcommand{\barellprime}{\bar{\ell}^{\,\prime}}
\author[Cluckers]{Raf Cluckers}
\address{Universit\'e Lille 1, Laboratoire Painlev\'e, CNRS - UMR 8524, Cit\'e Scientifique, 59655
Villeneuve d'Ascq C'edex, France, and, Katholieke Universiteit Leuven, Departement wiskunde,
Celestijnenlaan 200B, B-3001 Leu\-ven, Bel\-gium.
}
\email{raf.cluckers@wis.kuleuven.be}
\urladdr{http://www.wis.kuleuven.be/algebra/Raf/}
\author[Miller]{Daniel~J.~Miller}
\address{Emporia State University, Department of Mathematics, Computer Science and Economics, 1200 Commercial Street, Campus Box 4027, Emporia, KS 66801, U.S.A.}
\email{dmille10@emporia.edu}
\title[Integration of Constructible Functions]{Stability under integration of sums of products of real globally subanalytic functions and their
        logarithms}
\begin{document}

\begin{abstract}
We study Lebesgue integration of sums of products of globally subanalytic functions and their logarithms, called constructible functions. Our first theorem states that the class of constructible functions is stable under integration. The second theorem treats integrability conditions in Fubini-type settings, and the third result gives decay rates at infinity for constructible functions. Further, we give preparation results for constructible functions related to integrability conditions.
\end{abstract}

\maketitle

\section{Introduction}

Several ``tame'' frameworks on real affine spaces have come up through, for example,
semialgebraic and subanalytic geometry, and more generally, o-minimal structures, where tame means that many concrete analytic, topological, and geometric results hold.  On the other hand, many mathematical mysteries and non-tame behavior exist in the theory of real integrals, and they can be studied through concepts like $L^p$-spaces, Hardy spaces, periods,
and so on. For example, integrals in o-minimal set-ups are very little understood, with probably one of the hardest and most important cases being the study of integrals of semialgebraic functions defined over $\QQ$, which are called periods; see \cite{KontZag}. To give another example, for $f:\RR^n\times\RR^m\to\RR$ a function definable in an o-minimal structure  (for example, a semialgebraic function), the function given by the parameterized integral $x\in\RR^n \mapsto\int_{\RR^m}f(x,y)dy$ (a ``family'' of periods) will in general not belong to the original o-minimal structure. In this paper we
study such families of integrals in the o-minimal framework of globally subanalytic functions, which is richer than the collection of semialgebraic functions, but is still rather close to it.

In theory one could try to  expand any o-minimal structure to a larger structure by closing under parameterized integration, or,
under taking integral manifolds of suitable definable $1$-forms, as in the Pfaffian closure construction in \cite{SpeissPfaff}, but then the new structure will in general be much larger and more complicated than the original structure.
In this paper we follow another philosophy. Our central player is Lebesgue integration, and we put aside irrelevant definability constructions, like taking inverse functions of bijections, taking compositions, and so on. For example, the logarithm can be created using a parameterized integral, but we will not encounter the exponential function as a parameterized integral of globally subanalytic functions. The aim of the alternative philosophy is to find a class of functions with a tame theory of integration and
nice geometrical properties,
while avoiding the complexity to be blown up.
In this paper we provide such a tame theory of integration
for globally subanalytic sets and functions. We show that it is enough to enlarge this class of functions with (obligatory) logarithms to obtain stability and related properties under parameterized integration; see Theorems \ref{thm:LRC}, \tprime{\ref{THM:LRCfamily}} and Proposition \ref{thm:decayRate}.
As far as we can see, it is the only known framework of this kind, and it shows that the globally subanalytic functions form a very special collection of functions. Note that inspiring partial results in this direction were obtained by Comte, Lion, and Rolin
\cite{LR98}, \cite{CLR2000}, and that the $p$-adic and motivic understanding of parameterized integrals predates the study in the real setting through the work \cite{Denef1}, \cite{Ccell}, \cite{CLoes}, \cite{CLR}.

Our main Theorem \ref{thm:LRC} on the stability under parameterized integration deals crudely with integrability issues, and thus it must be understood together with Theorem \tprime{\ref{thm:LRCfamily}} on integrability.
These two theorems form a powerful pair,
particularly in view of Fubini's Theorem, and they are fundamental results on Lebesgue integration in the tame but rich context of globally subanalytic functions and their logarithms. The preparation theorems we obtain for the class of so-called constructible functions and Proposition \ref{thm:decayRate} describe the tame geometry of this class of globally subanalytic functions and their logarithms; see Definition \ref{def:constru}, in which we omit the word ``globally'' after the convention made in  Section \ref{sec:MainResults}.

\subsection{The Main Results}\label{sec:MainResults}

A function $f:X\subset \RR^n\to\RR$ is called globally subanalytic if its graph is a globally subanalytic set, and a set $A\subset\RR^n$ is called globally subanalytic if its image under the natural embedding of $\RR^n$ into $n$-dimensional real projective space, namely $\RR^n\to\PP^n(\RR): (x_1,\ldots,x_n)\mapsto(1:x_1:\cdots:x_n)$, is a subanalytic subset of $\PP^n(\RR)$ in the classical sense.  This condition on $A$ is equivalent to saying that $A$ is definable in the expansion of the real field by all restricted analytic functions $f:\RR^k\to\RR$, with $k\in\NN$, where a restricted analytic function $f$ is, by definition, associated to an analytic function $f_0$ on an open $U\subset\RR^k$ containing $[-1,1]^k$ and defined by
$$
f:\RR^k\to\RR:x\mapsto \left\{\begin{array}{ll} f_0(x) & \mbox{ if } x\in  [-1,1]^k,\\
0 & \mbox{ otherwise. }  \end{array}\right.
$$

Henceforth, in the whole paper, we abbreviate the terminology and consequently use the word \textbf{subanalytic} to mean globally subanalytic, for both sets and functions.  It is important to note that with this convention on our terminology, the natural logarithm $\log:(0,+\infty)\to\RR$ is not subanalytic.  (For more background on subanalytic sets and functions, see Bierstone and Milman \cite{BM88}, Denef and van den Dries \cite[Section 4]{DvdD}, and van den Dries and C. Miller \cite[2.5.\,Examples(4)]{vdDcM}.)

\begin{definition}
For $X$ a subanalytic set, $m\geq 0$ an integer, and $f:X\times \RR^m\to\RR$ a Lebesgue measurable function, consider the parameterized integral
\[
I_{X}(f):X\to\RR:x\mapsto
\left\{
\begin{array}{ll}
\int_{\RR^m}f(x,y)dy,
    & \mbox{if $f(x,\cdot)$ is integrable for each $x\in X$,}\\
0,
    & \mbox{otherwise},
\end{array}
\right.
\]
with $dy=dy_1\wedge \ldots\wedge dy_m$ the Lebesgue measure on $\RR^m$ and $f(x,\cdot)$ the function $y\in\RR^m\mapsto f(x,y)$, and where integrable means Lebesgue integrable.
\end{definition}
Theorem \ref{thm:LRC} will show that the following question has a particularly nice answer.
\begin{quote}
What functions can be obtained starting from the subanalytic functions through the finite application of the integral operators $I_X$?
\end{quote}
The answer will be in terms of the following rings of constructible functions.

\begin{definition}\label{def:constru}
For each subanalytic set $X$, let $\C(X)$ be the $\RR$-algebra of real-valued functions on $X$ generated by all subanalytic functions on $X$ and the functions $x\mapsto \log f(x)$, where $f:X\to(0,+\infty)$ is subanalytic.  We call $f\in \C(X)$ a constructible function on $X$, and $\C(X)$ is called the algebra of {\bf constructible functions} on $X$.
\end{definition}

(In some literature  constructible functions are named log-analytic functions.)
It is easy to see that all constructible functions can be obtained from the subanalytic functions through a single application of the $I_X$.  Indeed, for $f\in\C(X)$,  there are subanalytic functions $f_i:X\to\RR$ and $f_{i,j}:X\to(0,+\infty)$ such that
\[
f(x) = \sum_{i=1}^{k} f_i(x)\prod_{j=1}^{l_i}\log f_{i,j}(x).
\]
By absorbing sign information inside the functions $f_i$, we may assume that $f_{i,j}(x) > 1$ for all $x$.  Therefore if we write
\[
f(x) = \sum_{i=1}^{k} \int_{1}^{f_{i,1}(x)}\!\!\!\cdots\int_{1}^{f_{i,l_i}(x)}
    \frac{f_i(x)}{t_{i,1}\cdots t_{i,l_i}} dt_{i,1}\cdots dt_{i,l_i},
\]
and if we extend each integrand $(x,t_{i,1},\ldots,t_{i,l_i})\mapsto \frac{f_i(x)}{t_{i,1}\cdots t_{i,l_i}}$ to $X\times\RR^{l_1+\cdots+l_k}$ by zero outside its natural domain, we see that $f = I_X(F)$ for a single subanalytic function $F:X\times\RR^{l_1+\cdots+l_k}\to\RR$.  It follows from Theorem \ref{thm:LRC} and the above observation that the constructible functions are in fact \emph{precisely} the functions obtainable from the subanalytic functions through the  $I_X$
because the constructible functions are stable under integration.

\begin{theorem}[Stability under integration]\label{thm:LRC}
Let $f$ be in $\C(X\times \RR^{m})$ for some subanalytic set $X$. Then $I_{X}(f)$ is in
$\C(X)$.
\end{theorem}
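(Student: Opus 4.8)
The plan is to reduce to the one-variable case and then integrate a \emph{prepared} form of $f$ termwise. By Fubini's theorem I would write $\int_{\RR^m}=\int_{\RR^{m-1}}\int_{\RR}$, treat $X\times\RR^{m-1}$ as the new parameter space, integrate out the last variable, and induct on $m$; the one delicate point in this reduction is keeping track of where the intermediate integrals are finite, which is exactly the bookkeeping governed by the companion integrability statement Theorem~1.1$'$. So it suffices to prove the theorem for $m=1$.

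The central device is a preparation theorem in the variable $y$. Invoking a constructible refinement of the Lion--Rolin preparation theorem \cite{LR98}, I would partition $X\times\RR$ into finitely many subanalytic pieces on each of which $f(x,y)$ is a finite sum of \emph{prepared monomials}
\[
a(x)\,\big|y-\theta(x)\big|^{\,r}\,\big(\log\big|y-\theta(x)\big|\big)^{s}\,U(x,y),
\]
where $r\in\QQ$, $s\in\ZZ_{\ge 0}$, the center $\theta$ and coefficient $a$ are subanalytic, and $U$ is a \emph{unit} with a convergent expansion $U(x,y)=\sum_{\nu\ge 0}c_\nu(x)\,(y-\theta(x))^{\nu/d}$, $d\in\ZZ_{>0}$, with subanalytic coefficients $c_\nu$. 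After refining the partition I may assume each piece is a band $\{\alpha(x)<y<\beta(x)\}$ with subanalytic (possibly infinite) endpoints, and the endpoints at $\pm\infty$ I would convert to finite centers by the inversion $y\mapsto 1/y$, which preserves the prepared shape.

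Integration over $y$ then reduces to evaluating, on each band, the definite integral of each prepared monomial. The elementary antiderivative
\[
\int t^{\,r}(\log t)^s\,dt=
\begin{cases}
t^{\,r+1}\,P_{r,s}(\log t), & r\neq -1,\\[2pt]
\dfrac{(\log t)^{s+1}}{s+1}, & r=-1,
\end{cases}
\]
(with $P_{r,s}$ a universal polynomial) shows that an antiderivative is again a prepared constructible function of $(x,y)$, so evaluation at the endpoints yields constructible functions of $x$, since a subanalytic function composed with a subanalytic endpoint is subanalytic and $\log$ of a positive subanalytic function is constructible. The point requiring genuine care is the unit: multiplying by the monomial and integrating the series gives $\sum_{\nu}\frac{c_\nu(x)}{r+\nu/d+1}\,(y-\theta(x))^{\,r+\nu/d+1}$, and the decisive observation is that dividing the coefficients of a convergent power series by the affine-in-$\nu$ factors $r+\nu/d+1$ leaves the radius of convergence unchanged, so the resummed series again defines a restricted analytic, hence subanalytic, function of $x$ when evaluated at the small argument $(\beta(x)-\theta(x))^{1/d}$; the at most one index $\nu$ with $r+\nu/d+1=0$ contributes exactly an obligatory $\log$ factor. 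I expect this to be the main obstacle, both in establishing the preparation with adequate control of the unit and in justifying that termwise integration and re-summation remain inside $\C$.

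Summing over the finitely many bands and prepared terms, $I_X(f)$ becomes a finite $\RR$-linear combination of products of subanalytic functions and logarithms of positive subanalytic functions, hence lies in $\C(X)$. Finally I would reconcile this with the convention setting $I_X(f)=0$ where $f(x,\cdot)$ is not integrable: the prepared form pinpoints the subanalytic locus where integrability fails (only through monomials $|y-\theta|^{-1}$ or growth at infinity), so the constructible formula agrees with the integral precisely on a subanalytic set, compatibly with Theorem~1.1$'$.
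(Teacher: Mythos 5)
Your overall architecture (reduce to $m=1$ via Fubini and the companion integrability theorem, prepare $f$ in the last variable, integrate termwise with explicit antiderivatives) is the paper's architecture, but the proposal skips the two steps that carry the real weight of the proof. The first and most serious gap is termwise integrability. After preparation you have $f=\sum_i T_i$ on each piece, and you integrate each prepared monomial separately; but $f(x,\cdot)$ integrable does not imply each $T_i(x,\cdot)$ integrable --- the non-integrable parts of distinct terms could cancel. Your closing remark that ``the prepared form pinpoints the subanalytic locus where integrability fails'' does not repair this: Example \ref{ex:notint} shows the integrability locus of a constructible function need not be subanalytic, and in any case the issue is cancellation between terms, not locating a bad set. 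The paper spends all of Sections \ref{s:sliver}--\ref{s:constrPrep} on exactly this point: Proposition \ref{prop:constrPrep} shows, via the sliver functions and the distinctness of the tuples of logarithmic exponents in Theorem \ref{thm:constrPrep}, that the terms cannot cancel asymptotically, so integrability of $f$ on (a dense set of) fibers forces integrability of every term. Without some substitute for this non-cancellation argument your termwise integration is unjustified.

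The second gap is in your description of the unit. A strong subanalytic unit has the form $U\circ\varphi$ with $\varphi(x,y)=\bigl(c_1(x),\ldots,c_N(x),c_{N+1}(x)|y-\theta(x)|^{1/p},c_{N+2}(x)|y-\theta(x)|^{-1/p}\bigr)$ (Lemma \ref{lemma:strongUnit}); the \emph{negative} power is unavoidable, and on a band where both $y$ and $d(x)/y$ can be small simultaneously the unit does \emph{not} admit a one-variable expansion $\sum_\nu c_\nu(x)(y-\theta(x))^{\nu/d}$ with coefficients depending on $x$ alone --- the would-be coefficients involve $d(x)/y$. This is precisely why the paper localizes on the compact set $K$, performs the case analysis on $(Y_0,Z_0)$, and invokes the splitting lemma to break the doubly-indexed series $\sum F_{i,j}(X)Y^iZ^j$ into the parts $i-j\le -2$, $i-j=-1$, $i-j\ge 0$, each of which is then integrable in closed form. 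Your resummation observation (dividing coefficients by $r+\nu/d+1$ preserves convergence, with the single resonant index producing the obligatory logarithm) is correct and matches the paper's construction of $G$ and $H$, but it only applies after the splitting has reduced the unit to a genuine one-variable series.
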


The definition of $I_X$ deals crudely with integrability issues.  So in order to apply Theorem \ref{thm:LRC} in combination with Fubini's theorem, we show that for constructible functions, the notion of ``integrable almost everywhere'' has a subanalytic interpretation.  In simplest form this means the following.



\begin{theorem}[Integrability]\label{thm:LRCfamily}
Let $f$ be in $\C(\RR^{n+1})$.
Suppose that for  all $y$ in a dense subset $C$ of $\RR^n$, the function
\begin{equation*}\label{eq:z}
f(y,\cdot):\RR\to \RR: z\mapsto f(y,z)
\end{equation*}
is integrable.
Then there exists a subanalytic dense subset $C'$ of $\RR^n$ such that $f(y,\cdot)$ is integrable over $\RR$ for all $y\in C'$.
\end{theorem}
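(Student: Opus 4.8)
The plan is to reduce the global integrability of each fiber $f(y,\cdot)$ to the vanishing of finitely many auxiliary constructible functions of $y$, and then to exploit the rigidity of constructible functions on open strata. The main tool will be a preparation theorem for constructible functions in the distinguished variable $z$ (the integrability-oriented preparation announced in the introduction): applied to $f$, it yields a finite partition of $\RR^n$ into subanalytic sets $D_\alpha$ and, over each $D_\alpha$, a finite subdivision of the $z$-line by subanalytic center functions $\theta_{\alpha,1}(y) < \cdots < \theta_{\alpha,p}(y)$ such that on each resulting cell $f$ is given by an explicit prepared expression, a finite sum of terms $c(y)\,|z-\theta(y)|^{a}\bigl(\log|z-\theta(y)|\bigr)^{b}\,U(y,z)$ with $a \in \QQ$ and $b \in \NN$ constant on the cell, $c$ constructible in $y$, and $U$ a subanalytic unit bounded away from $0$ and $\infty$.

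First I would read off, cell by cell, a criterion for integrability. Near a finite endpoint $\theta(y)$ the integral of such a prepared sum converges precisely when the coefficients of all monomials $|z-\theta(y)|^{a}\bigl(\log|z-\theta(y)|\bigr)^{b}$ with $a \le -1$ vanish; as $z \to \pm\infty$ it converges precisely when the coefficients of all monomials $z^{a}(\log z)^{b}$ in the non-integrable range ($a > -1$, or $a = -1$ with $b \ge -1$) vanish. The essential point, supplied by the preparation, is that distinct monomials $z^{a}(\log z)^{b}$ cannot cancel one another asymptotically, so the dominant surviving monomial controls each tail and no hidden cancellation occurs; hence, on each stratum $D_\alpha$, integrability of $f(y,\cdot)$ over $\RR$ is equivalent to the simultaneous vanishing of a finite list of coefficient functions $c_{\alpha,1}(y),\dots,c_{\alpha,r_\alpha}(y)$, each of which is \emph{constructible} on $D_\alpha$.

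Next I would restrict attention to the union $U$ of the top-dimensional (open) strata $D_\alpha$, which is a dense open subanalytic subset of $\RR^n$ since the remaining strata are nowhere dense. Because $C$ is dense in $\RR^n$ and each such $D_\alpha$ is open, $C \cap D_\alpha$ is dense in $D_\alpha$, and for every $y$ in this set all the obstructions $c_{\alpha,j}(y)$ vanish. I would then invoke the rigidity of constructible functions: each $c_{\alpha,j}$ is real-analytic off a subanalytic subset of $D_\alpha$ of dimension $< n$, so on each connected component of the complement it is a real-analytic function vanishing on a dense subset, hence identically zero. Thus every $c_{\alpha,j}$ vanishes on a dense open subanalytic subset $D_\alpha' \subseteq D_\alpha$, and setting $C' = \bigcup_\alpha D_\alpha'$ over the open strata produces a dense subanalytic subset of $\RR^n$ on which every obstruction vanishes, so $f(y,\cdot)$ is integrable for all $y \in C'$, as required.

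The step I expect to be the main obstacle is the passage from the prepared form to the clean integrability criterion of the second paragraph: one must verify that the coefficients governing divergence at the finite centers and at infinity genuinely assemble into constructible functions of $y$, and that after allowing cancellation \emph{within} each log-power monomial there is no further cancellation \emph{between} distinct monomials that could render a fiber integrable for sporadic $y$ without the corresponding coefficient vanishing. This is exactly the content that the preparation theorem must be engineered to deliver, and controlling the uniform asymptotics of the units $U(y,z)$ as $z \to \pm\infty$ is where the real work lies.
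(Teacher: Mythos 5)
Your outer strategy (prepare $f$ via Theorem \ref{thm:constrPrep}, restrict to the open top-dimensional strata where $C$ remains dense, and let $C'$ be their union) is exactly the paper's, but the engine you put inside it has a genuine gap. You assert that on each stratum, integrability of $f(y,\cdot)$ is \emph{equivalent} to the vanishing of finitely many constructible coefficient functions $c_{\alpha,j}(y)$, where $c_{\alpha,j}$ is the coefficient of a monomial $|z-\theta(y)|^{a}(\log|z-\theta(y)|)^{b}$ in the prepared sum. This is false for the only notion of ``coefficient'' the preparation actually hands you, because the units $U(y,z)$ are not constant in $z$. Several terms of the prepared sum can share the same pair $(a,b)$ in $z$ while differing in their logarithmic monomials in $y$ (the preparation only guarantees distinctness of the \emph{full} tuples of log-exponents), so the coefficient of $|z-\theta|^{a}(\log|z-\theta|)^{b}$ is a sum $\sum_i u_i(y,z)g_i(y)$ whose limit as $z\to\theta(y)$ is your $c_{\alpha,j}(y)$ --- but whose value for $z\neq\theta(y)$ is only $c_{\alpha,j}(y)+O(|z-\theta|^{\rho})$. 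Concretely, take
\[
f(y,z)=z^{-3/2}\bigl(1+(yz)^{1/4}\bigr)+z^{-3/2}\log y
\]
on $(0,1)^2$: this is in prepared form (the log-exponent tuples $(0,0)$ and $(1,0)$ are distinct), the coefficient of $z^{-3/2}$ is $1+\log y$, which vanishes at $y=e^{-1}$, and yet $f(e^{-1},\cdot)=e^{-1/4}z^{-5/4}$ is not integrable near $0$. So ``all obstruction coefficients vanish'' does not imply integrability, and a symmetric phenomenon (higher-order terms of one unit feeding into the leading monomial of another group) breaks the converse implication as well. A correct pointwise criterion would require expanding every unit as a convergent generalized power series in $z-\theta(y)$ with constructible coefficients and collecting \emph{all} contributions to each non-integrable monomial; that can likely be made to work, but it is a substantially heavier argument than what you have written, and it is not what you have proved.

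The paper sidesteps the pointwise criterion entirely. Its Proposition \ref{prop:constrPrep} shows that if the set of parameters over which $f$ is fiberwise integrable is merely \emph{dense} in the open base of a prepared cell, then every individual term $T_i$ is integrable over every fiber --- equivalently, the minimal exponent $\bar r$ of the last variable over \emph{all} terms satisfies $\bar r>-1$, so on such a cell there are no obstruction coefficients to begin with. The proof of that proposition is where the real work lies: one restricts to a ``sliver'' (an open subset of the base parameterized so that all rational monomials become powers of a single variable $t_1$ with affine exponents), uses the distinctness of the log-exponent tuples to see that the sum of the dominant terms, divided by a reference monomial $W$, tends to a value controlled by a nonzero polynomial in the sliver parameters, hence is bounded away from zero on an open set; density of the integrability locus then forces $W$ itself to be integrable, i.e.\ $\bar r>-1$. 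Your continuity-of-constructible-functions argument is playing the role that the sliver plays in the paper, but it can only be deployed once a valid finite list of constructible obstructions exists --- which is precisely the step your proposal leaves unproven and, as stated, is incorrect.
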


Note that if $\RR^n\setminus C$ has measure zero, then $C$ is dense in $\RR^n$.  Also note that a subanalytic set $C'$ is dense in $\RR^n$ if and only if $\RR^n\setminus C'$ has measure zero, if and only if the dimension of $\RR^n\setminus C'$ is less than $n$.

We do not provide a separate proof of Theorem \ref{thm:LRCfamily}, but instead prove Theorem \tprime{\ref{THM:LRCfamily}}, which is a stronger, parameterized version of Theorem \ref{thm:LRCfamily}.  The proof of Theorem \ref{thm:LRC} will be reduced to the case that $m=1$ by Theorem \tprime{\ref{THM:LRCfamily}} (with $Y = \RR^{m-1}$) and Fubini's
theorem.

\addtocounter{theorem}{-1}
\setcounter{Theorem}{\value{theorem}}
\addtocounter{theorem}{1}

\begin{Theorem}[Integrability in families]\label{THM:LRCfamily}
Let $f$ be in $\C(X\times Y\times \RR)$ for some subanalytic sets $X$ and $Y$.
Suppose that for each $x\in X$ and for all $y$ in a dense subset $C_x$ of $Y$, the function
\begin{equation*}\label{eq:z:cor}
f(x,y,\cdot):\RR \to \RR: z\mapsto f(x,y,z)
\end{equation*}
is integrable, where $C_x$ may depend on $x$.
Then there are dense subsets $C'_x$ of $Y$ such that the set $\{(x,y)\in X\times \RR^n : y\in C'_x\}$ is subanalytic and such that for all $x\in X$ and all $y$ in $C'_x$, the function $f(x,y,\cdot)$ is integrable.
\end{Theorem}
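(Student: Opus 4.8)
The plan is to treat $(x,y)$ jointly as a parameter $w$ ranging over $W=X\times Y$ and to reduce everything to the asymptotic behaviour in the single integration variable $z$ by means of a preparation theorem for constructible functions. First I would prepare $f\in\C(W\times\RR)$ in the variable $z$: this produces a finite partition of $W$ into subanalytic pieces $P$ together with, over each $P$, a finite subanalytic decomposition of the $z$-line into cells carrying a center $\theta(w)$, on which $f$ takes the prepared form
\[
f(w,z)=\sum_{(r,s)\in E} c_{r,s}(w)\,|z-\theta(w)|^{r}\bigl(\log|z-\theta(w)|\bigr)^{s}\,u_{r,s}(w,z),
\]
where $E$ is finite, $r\in\QQ$, $s\in\NN$, each $c_{r,s}$ is constructible on $P$, and each $u_{r,s}$ is a subanalytic unit tending to a nonzero limit at the ends of the cell. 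Writing $J\subseteq W$ for the integrability locus, integrability of $f(w,\cdot)$ over $\RR$ is equivalent to integrability on each cell, which in turn is a condition only at the finitely many \emph{ends} of the cells, namely approach to a center (where $|z-\theta|\to0$) and approach to $\pm\infty$.

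The second step is to read off end integrability from the dominant monomial. Since the $u_{r,s}$ tend to nonzero constants and distinct pairs $(r,s)$ give asymptotically incomparable scales, $f(w,\cdot)$ is integrable near a given end if and only if the asymptotically dominant monomial whose coefficient $c_{r,s}(w)$ is nonzero is itself integrable there; a monomial $|z-\theta|^{r}(\log|z-\theta|)^{s}$ is integrable near a finite end iff $r>-1$ and near $\infty$ iff $r<-1$ (the case $r=-1$ never being integrable since $s\geq0$). The crucial feature is that near a finite end the dominant pair has the smallest $r$, so ``dominant integrable'' forces every present exponent to satisfy $r>-1$ and hence \emph{all} monomials to be integrable; symmetrically near $\infty$. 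Thus, once I refine $P$ so that each $c_{r,s}$ is either identically zero or not identically zero on $P$ and let $(r^{*},s^{*})$ be the dominant surviving pair at each end, the pieces split into \emph{good} ones---where $(r^{*},s^{*})$ is integrable at every end, so that $f(w,\cdot)$ is integrable for \emph{every} $w\in P$---and \emph{bad} ones, where at some end $(r^{*},s^{*})$ is non-integrable, so that $J\cap P\subseteq\{c_{r^{*},s^{*}}=0\}$.

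I would then set $C'$ to be the union of the good pieces. This is subanalytic and contained in $J$ by construction, so it remains only to check that each fiber $C'_{x}$ is dense, and this is exactly where the density hypothesis enters. On a bad piece the fiber $(J\cap P)_{x}$ lies in the zero set of the constructible function $c_{r^{*},s^{*}}(x,\cdot)$; provided this coefficient does not vanish identically on $P_{x}$, that zero set is lower-dimensional in $P_{x}$, hence nowhere dense. Since $J_{x}\supseteq C_{x}$ is dense in $Y$ while its intersection with the bad pieces is nowhere dense, the set $J_{x}\cap\bigcup_{\mathrm{good}}P_{x}$ is dense in $Y$; as it is contained in $C'_{x}$, the fiber $C'_{x}$ is dense, as required.

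The assumption that makes the previous paragraph run---and the step I expect to be the main obstacle---is that the dominant coefficient of each bad piece vanishes identically on no fiber of the stratum under consideration. This is delicate because the $c_{r,s}$ are constructible, so their zero sets need not be subanalytic (consider $\{\log x_{1}=x_{2}\}$). The plan is to stratify $X$ subanalytically according to the pattern of \emph{fiberwise identical vanishing} of the coefficients, that is, according to the loci of $x$ for which $c_{r,s}(x,\cdot)$ vanishes identically on $P_{x}$, and to induct on $\dim X$: on each stratum one first discards the fiberwise-identically-zero coefficients and only then chooses $(r^{*},s^{*})$, which guarantees that the surviving dominant coefficient of a bad piece is fiberwise not identically zero. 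Establishing that this stratification is itself subanalytic is the crux; I would handle it using that a constructible function is real-analytic off a lower-dimensional subanalytic set, so that a nonzero one has a lower-dimensional zero set, together with the structural control on the coefficients $c_{r,s}$ furnished by the preparation theorem, which should let the identical-vanishing loci be captured by subanalytic conditions fiber by fiber.
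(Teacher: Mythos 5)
Your overall plan (prepare in $z$, classify pieces as good or bad according to the dominant surviving exponent at each end, take $C'$ to be the union of the good pieces, and use density to dispose of the bad ones) has the right shape, and you have correctly located the crux. But the resolution you propose for that crux does not work. The locus $\{x : c_{r,s}(x,\cdot)\equiv 0 \text{ on } P_x\}$ of fiberwise identical vanishing of a constructible coefficient is \emph{not} subanalytic in general: take $c(x_1,x_2,y)=(x_1-\log x_2)\,h(y)$ with $h$ nowhere zero, whose identical-vanishing locus is the graph of $\log$ --- exactly the obstruction exhibited in Example \ref{ex:notint}. So the stratification of $X$ by vanishing patterns cannot be carried out inside the subanalytic category, and the set $C'$ your recipe produces need not be subanalytic, which is part of what the theorem asserts. (The auxiliary claim that a nonzero constructible function has lower-dimensional zero set is also false as stated --- such a function can vanish identically on an open cell without vanishing everywhere --- though that part is repairable by a further cell decomposition; the fiberwise-vanishing locus is the part that cannot be repaired.)

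The paper sidesteps this by never letting constructible coefficients survive: Theorem \ref{thm:constrPrep} prepares $f$ in \emph{all} variables at once, so that on each cell every term is an explicit product $u_i(\tld{x})\prod_j|\tld{x}_j|^{\alpha_j(i)}(\log|\tld{x}_j|)^{\ell_j(i)}$ of a unit, a rational monomial, and integer powers of logarithms of the prepared coordinates, with the log-exponent tuples $(\ell_j(i))_j$ pairwise distinct; each term is then nowhere zero on the cell, and the question of ``which coefficients survive'' disappears. What replaces it is the possibility that the finitely many terms \emph{cancel} one another along the fiber, so that the sum is integrable while the individual terms are not; this is ruled out by the sliver-function argument of Sections \ref{s:sliver}--\ref{s:constrPrep} culminating in Proposition \ref{prop:constrPrep}: along a suitable open sliver the sum is asymptotically a nonzero multiple of a single monomial $W$, because the distinctness of the log-tuples makes the polynomial \eqref{eq:tpoly} nonzero. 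That non-cancellation statement is the ingredient your proposal is missing, and it cannot be replaced by an analysis of zero sets of constructible functions, precisely because those zero sets leave the subanalytic category.
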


Along the way we prove the following
proposition for bounding
constructible functions
at infinity.

\begin{proposition}[Decay rates]\label{thm:decayRate}
Let $f$ be in $\C(X\times\RR)$ for some subanalytic set $X$, and suppose that
\[
\lim_{y\to +\infty} f(x,y) = 0
\]
for all $x\in X$.  Then there exist a constant $r>0$ and a subanalytic function $g:X\to(0,+\infty)$ such that
\[
|f(x,y)| \leq y^{-r}
\]
for all $x\in X$ and all $y$ with $y > g(x)$.
\end{proposition}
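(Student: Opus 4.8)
The plan is to extract the asymptotic behavior of $f(x,y)$ as $y\to+\infty$ from a preparation theorem for constructible functions, applied in the single variable $y$ with $x$ as a parameter. Such a theorem (in the spirit of the preparation results for constructible functions established in this paper, going back to Lion and Rolin) provides a finite partition of $X$ into subanalytic pieces $X=\bigsqcup_\ell X_\ell$, a subanalytic function $\theta_\ell\colon X_\ell\to(0,+\infty)$ on each piece, and a representation
\[
f(x,y)=\sum_{i=1}^{k_\ell} a_i(x)\,y^{q_i}(\log y)^{p_i}\,u_i(x,y)
\qquad\text{for }x\in X_\ell,\ y>\theta_\ell(x),
\]
where the exponents $q_i\in\QQ$ and $p_i\in\NN$ are constants, the monomials $y^{q_i}(\log y)^{p_i}$ are pairwise distinct, each $a_i$ is a constructible function on $X_\ell$, and each $u_i$ is a subanalytic unit bounded between two positive constants with $u_i(x,y)\to 1$ as $y\to+\infty$. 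Since there are finitely many pieces, it suffices to produce a suitable $r>0$ and a subanalytic $g$ on each piece; I therefore fix one piece and drop the subscript $\ell$.

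First I would use the hypothesis to control the exponents. The monomials $y^{q}(\log y)^{p}$ are asymptotically independent: ordering the pairs $(q,p)$ lexicographically, the term of largest order dominates. Fix $x$ and let $i^{*}$ index the largest order among those $i$ with $a_i(x)\neq 0$ (if there is no such $i$, then $f(x,\cdot)\equiv 0$ for $y>\theta(x)$ and there is nothing to prove at that point). Dividing by $y^{q_{i^{*}}}(\log y)^{p_{i^{*}}}$ and using $u_i\to 1$ gives $f(x,y)/\bigl(y^{q_{i^{*}}}(\log y)^{p_{i^{*}}}\bigr)\to a_{i^{*}}(x)\neq 0$; since $f(x,y)\to 0$, the dominating monomial must tend to $0$, which (as $p_{i^{*}}\ge 0$) forces $q_{i^{*}}<0$. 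As $i^{*}$ realizes the maximal order, every $i$ with $a_i(x)\neq 0$ satisfies $q_i\le q_{i^{*}}<0$. Running this over all $x\in X_\ell$ shows that $q_i<0$ for every index $i$ with $a_i\not\equiv 0$; discarding the identically-zero terms, we may assume $q_i<0$ for all $i$.

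Now I would set up the bound. Since there are finitely many terms and all $q_i<0$, I can choose rationals $r,\eta>0$ with $r+\eta<\min_i(-q_i)$, so that $q_i+r+\eta<0$ for every $i$. There is a constant $y_0$ with $(\log y)^{p_i}\le y^{\eta}$ for all $y\ge y_0$ and all $i$, and $|u_i|\le C$ for some constant $C$; hence
\[
|f(x,y)|\;\le\;C\sum_{i} |a_i(x)|\,y^{q_i}(\log y)^{p_i}
\;\le\; y^{-r}\Bigl(C\sum_i |a_i(x)|\,y^{\,q_i+r+\eta}\Bigr)\qquad(y\ge y_0),
\]
so it is enough to force the bracketed sum to be at most $1$. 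The key remaining point is that although each $a_i$ is only constructible, it is dominated by a subanalytic function: for positive subanalytic $h$ one has $|\log h|\le h+h^{-1}$, so replacing each log factor of $a_i$ by this subanalytic majorant and expanding gives $|a_i|\le A_i$ for some subanalytic $A_i\colon X_\ell\to(0,+\infty)$. Writing $\delta_i:=-(q_i+r+\eta)>0\in\QQ$, the inequality $C\,A_i(x)\,y^{-\delta_i}\le 1/k_\ell$ holds as soon as $y\ge (k_\ell C\,A_i(x))^{1/\delta_i}=:g_i(x)$, which is subanalytic since subanalytic functions are closed under positive rational powers. Taking $g(x):=\max\bigl(\theta(x),\,y_0,\,\max_i g_i(x)\bigr)$, which is subanalytic, yields $|f(x,y)|\le y^{-r}$ for all $y>g(x)$; assembling the finitely many pieces (taking the minimum of the $r$'s and gluing the $g$'s over the subanalytic partition) gives the claim.

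I expect the main obstacle to be packaging the preparation theorem in exactly the uniform form used above — in particular, obtaining a single subanalytic threshold $\theta_\ell(x)$ beyond which an expansion with distinct, asymptotically independent monomials and units tending to $1$ is valid, so that the pointwise hypothesis $f(x,\cdot)\to 0$ can be converted term-by-term into negativity of the exponents $q_i$. Once that expansion is in hand, the remaining work — the elementary domination of constructible coefficients by subanalytic ones and the choice of $r$ and $g$ — is routine.
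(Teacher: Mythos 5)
Your overall strategy is the right one, and your endgame (once all exponents of $y$ are known to be negative: dominate the constructible coefficients by subanalytic functions via $|\log h|\le h+h^{-1}$, trade $(\log y)^{p}$ for $y^{\eta}$, and solve for the threshold $g$) matches the paper's almost verbatim --- the paper uses its Lemma \ref{lemma:subanalBound} and defines $g(y')=\min\{\delta,\,h(y')^{-1/(\bar r-\epsilon\bar\ell-r)}\}$ in exactly this way. The gap is in the preparation you assume. Theorem \ref{thm:constrPrep} does \emph{not} deliver a representation $f=\sum_i a_i(x)\,y^{q_i}(\log y)^{p_i}u_i(x,y)$ with pairwise distinct monomials in $y$, coefficients $a_i$ depending on $x$ only, and units tending to $1$. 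What it delivers is a sum of terms $u_i(\tld x)\prod_{j\in J}|\tld x_j|^{\alpha_j(i)}(\log|\tld x_j|)^{\ell_j(i)}$ in which only the \emph{full} tuples $(\ell_j(i))_{j\in J}$ (over all asymptotically undetermined variables, not just $y$) are distinct, and in which the units $u_i$ are genuinely multivariate. Two terms may carry the same monomial $y^{\alpha_n}(\log y)^{\ell_n}$ while differing in their $\log|\tld x_j|$ exponents for $j<n$; grouping them, the ``coefficient'' of that $y$-monomial is a finite sum of units times log-monomials in the $x$-variables, and its limit as $y\to+\infty$ is a constructible function of $x$ that could \emph{a priori} vanish identically by cancellation among those log-monomials. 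Your key step --- pick an $x$ where the leading coefficient is nonzero, so that $f(x,y)$ is genuinely asymptotic to the top monomial and the hypothesis forces its exponent to be negative --- presupposes that no such cancellation occurs. Ruling it out is precisely the hard core of the paper's argument: the sliver functions of Sections \ref{s:sliver}--\ref{s:constrPrep} are introduced to exhibit an open region on which the leading terms cannot cancel (the nonvanishing of the polynomial \eqref{eq:tpoly}, which uses the distinctness of the log-exponent tuples), whence the limit \eqref{eq:Wasymp} is nonzero and $\bar r>0$ follows. You flag ``packaging the preparation theorem'' as the main obstacle, but it is not a packaging issue; it is the non-cancellation statement itself, and without it the pointwise dominance argument has no point $x$ to stand on when the grouped leading coefficient happens to vanish.

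A secondary, related inaccuracy: even where the grouped coefficient's limit is nonzero, it cannot in general be factored as $a(x)\cdot v(x,y)$ with $v$ a unit tending to $1$ (the ratio of the grouped coefficient to its limit need not be bounded away from $0$ and $\infty$), so the single-unit normalization you assume is also not available. That said, your observation that one only needs to control the lexicographically largest $(q,p)$ --- since all other $q_j\le q_1<0$ automatically --- is a genuine simplification of the bookkeeping, and the conversion of the pointwise limit hypothesis into negativity of the top exponent is correct \emph{conditional} on the non-cancellation input. To repair the proof you would either have to prove that input (essentially reproving the sliver argument) or, as the paper does, run the argument of Proposition \ref{prop:constrPrep} in the transformed coordinates $F=G^{[\theta]}_A\circ H$ where $y\to+\infty$ becomes $y_d\to 0^+$ and conclude $\bar r>0$ from \eqref{eq:W} and \eqref{eq:Wasymp}.
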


Theorem \ref{thm:LRC} yields a small and geometrically interesting framework of functions which is closed under integration and which is rich enough to contain all
semialgebraic functions. Thus, this framework contains a relative (in the sense of parameterized) version of the periods as presented by Kontsevich and Zagier \cite{KontZag}.
Note that Theorems \ref{thm:LRC} and \ref{thm:LRCfamily} in the special case that $f$ is a subanalytic function appear already in \cite{CLR2000}. In \cite{LR98}, the $I_X(f)$ of Theorem \ref{thm:LRC} for constructible $f$ are shown to belong to a larger class of functions than $\C(X)$, hence our Theorem \ref{thm:LRC} is more precise and gives moreover a stable framework under integration.

In \cite{CluckersMiller2} we study $L^p$-properties of constructible functions for general $p$, as well as variants of Theorem \tprime{\ref{THM:LRCfamily}}.  In \cite{CluckersMiller3} we give an application of our work to harmonic analysis and generalized Fourier transforms in the context of chapter VIII of Stein's book \cite{Stein}.

\subsection{Some context}
It is classical to study asymptotic expansions for one parameter integrals like
$$
E(z):=\int\limits_{z=f(x),\ x\in (-1,1)^n} \frac{dx_1\wedge\ldots\wedge dx_n}{df},
$$
or its Fourier transform,
with $f:U\to\RR $ analytic on an open set $U$ containing $[-1,1]^n$, having some isolated critical points in $(-1,1)^n$, and where $dx/df$ is the Gelfand-Leray differential form and $z$ runs over noncritical values of $f$. Such asymptotic expansions are  interesting for $z$ going to a critical value of $f$ and are used, for example, in Varchenko's definition of the Hodge filtration and the spectrum of an isolated singularity of $f$; see \cite{AGZV88} and \cite{Kulikov98}. Just to name some other studies of similar parameterized integrals we mention the link made to monodromy by Malgrange \cite{Mal74}, and the work for nonisolated critical points by Barlet \cite{Barlet82} and Loeser \cite{Loeser86}.

Instead of looking at such classical asymptotical expansions with only one parameter,
in Theorem \ref{thm:LRC}
we describe globally (that is, for all parameters) what happens for a richer class of integrands (namely, what we call constructible functions) and when arbitrarily many parameters are involved.

Lion and Rolin \cite{LR98}, in their Th\'eor\`eme d'Int\'egration, examined bounded constructible functions (with bounded domain and bounded range) and described the parameterized integrals of such bounded functions, but did not show that
they are
again constructible. Instead they only showed that such an integral is a pointwise limit of constructible functions. Combined with their Th\'eor\`eme d'\'Elimination \cite{LR98}, they also showed that such an integral is piecewise the restriction of constructible functions, but possibly with pieces more general than subanalytic sets, which again misses the strength and naturality of Theorem \ref{thm:LRC}.

Comte, Lion and Rolin \cite{CLR2000} continued the work of \cite{LR98} by showing that the ($k$-dimensional) volume of sets in a subanalytic family is a constructible function in the parameters of the family, provided that the volumes are finite. This indeed is the  special case of Theorem \ref{thm:LRC} where $f$ itself is (globally, as always) subanalytic, as opposed to constructible (the latter may involve logarithms). They also show in Th\'eor\`eme 1' of \cite{CLR2000} that the set of parameters where the volume is finite is subanalytic, which compares to Theorem \ref{thm:LRCfamily} but which does not generalize literally to constructible functions, as is shown by the following example.


\begin{example}\label{ex:notint}
Let $g:\RR\to \RR$ be a function in $\C(\RR)$ that is not
integrable over $\RR$. Define
$f:(\RR\setminus\{0\})^3\to\RR:(x,y,z)\mapsto (x-\log|y|)g(z)$. Then
the set of $(x,y)$ such that $z\mapsto f(x,y,z)$ is integrable is
not (globally, as always in this paper) subanalytic since it is the graph of $\log|\cdot|$.
\end{example}
This example shows that one can not really hope for more than what is given by Theorem
\ref{thm:LRCfamily} and that a naive adaptation of Theorem 1' of \cite{CLR2000} to the context of constructible functions is false.
%
%
%
Because $x\mapsto \log f(x)$ is in general not a (globally) subanalytic function for $f$ a subanalytic function, the assumption in Theorem \ref{thm:LRCfamily} that the set $C$ is dense   is
essential.


Logarithms play an important role in many studies of integrals and differential equations, but it is somehow surprising that they suffice to yield a  framework closed under integration as given by Theorem \ref{thm:LRC}, although \cite{CLR2000} and the $p$-adic situation already hinted in that direction.

\subsection{The $p$-adic analogue} 
Using Haar measures on the $p$-adic numbers, Igusa  \cite{Igusa3} gave several
asymptotic expansions for $p$-adic one parameter integrals, and
Denef \cite{Denef3}, \cite{Denef1} obtained results which are very close to the $p$-adic analogue of Theorem \ref{thm:LRC}.
 Similar results to our two main theorems were made uniformly in big $p$ and for motivic
integrals in \cite{CLoes}. These steps meant a global description
instead of asymptotic expansions, arbitrarily many parameters in
the integrals, and a framework closed under integration. In \cite{Ccell} and \cite{CLR}, one
enriched the class of integrands to include $p$-adic subanalytic functions, instead of only $p$-adic semialgebraic functions.

In some sense, the $p$-adic situation is easier since many $p$-adic integrals (e.g., of $p$-adic constructible functions) can be reduced to certain sums over the
residue field and the value group, and these are often more easy to handle. On the reals such a reduction lacks, and one has to use very precise and specific versions of preparation theorems to prepare the integrands on nice pieces in a finite partition.

Another difficult and open problem on the real numbers is that, while for the $p$-adics integration is well understood as soon as one has a nice cell decomposition and such cell decomposition is known for semialgebraic as well as for subanalytic set-ups \cite{Denef2}\cite{Ccell}, for the reals a theory of functions closed under integration for now only makes sense in the subanalytic set-up, and is completely open in the semialgebraic set-up. This is so because many non-semialgebraic functions come up as parameterized integrals of semialgebraic functions and not only their logarithms play a role.

\subsection{Our Method}
The heart of our proof of Theorems \ref{thm:LRC}, \tprime{\ref{thm:LRCfamily}} and Proposition \ref{thm:decayRate} lies in the preparation theorem \ref{thm:constrPrep} for constructible functions and our main proposition \ref{prop:constrPrep} about compatibility of such preparations with integrability conditions.  More specifically, in Theorem \ref{thm:constrPrep} we consider a constructible function $f:X\to\RR$ for some subanalytic set $X\subset\RR^n$, and we construct a subanalytic cell decomposition of $X$ such that on each cell $A$ in the decomposition, $f$ can be expressed as a finite sum $f(x) = \sum_{i\in I} T_i(x)$ where each term $T_i(x)$ is a very basic term with good properties, namely, roughly a product of a rational monomial, an integral logarithmic monomial, and a subanalytic unit, with distinct powers in the logarithmic monomials. In Proposition \ref{prop:constrPrep} we then show, for simplicity say when $A=X$,  that if the set of $x_{<n}:=(x_1,\ldots,x_{n-1})$ for which $f(x_{<n},\cdot):x_n\mapsto f(x_{<n},x_n)$ is integrable is dense in $\Pi_{n-1}(A) := \{x_{<n} : x\in A\}$, then for all $x_{<n}\in\Pi_{n-1}(A)$ and all $i\in I$ the function $T_i(x_{<n},\cdot)$
is integrable.  Theorem \tprime{\ref{thm:LRCfamily}} and Proposition \ref{thm:decayRate} will follow. Together with Fubini's Theorem, this will reduce the proof of Theorem \ref{thm:LRC} to the case $m=1$, which will be proven by integrating each of the terms $T_i$
using an adapted version of the procedure of Lion and Rolin \cite{LR98}.
Note that these proofs avoid taking limits for improper integrals, which were used previously in
\cite{LR98}, \cite{CLR2000}.

We begin by establishing refined versions of the Lion-Rolin preparation theorem
of \cite{LR97} for subanalytic functions
which are used to prove Theorem \ref{thm:constrPrep}; see Theorems \ref{thm:cellPrep} and \ref{thm:iteratedCellPrep} below.
To prove the main Proposition \ref{prop:constrPrep} we use an additional, new technique: we introduce special functions, called ``sliver functions'', to study the relative asymptotic behavior of the terms $T_i$ to show that they do not cancel each other out on some thin, open ``sliver''.

\subsection*{Acknowledgment} The authors would like to thank several persons for the interest they have shown in our work and for stimulating discussions, among which M.~Aschenbrenner, D.~Bertrand, J.-B.~Bost, G.~Comte, J.~Denef, Z.~Denkowska, E.~Hrushovski, J.-M.~Lion, F.~Loeser, J.-P. Rolin, P.~Speissegger, and B.~Totaro. Further they would like to thank the Newton Institute for its hospitality and for providing excellent working conditions. 

\section{Cylindrical preparation of subanalytic functions}\label{s:subPrep}

%
%
In this section we recall the statement of the
subanalytic (cylindrical) preparation theorem from \cite{LR97} (see also
\cite{DJMprep}) and its supporting definitions, and we fix some notation to be used throughout the paper.

\begin{notation}\label{notation:projections}
Consider integers $m$ and $n$ with $0\leq m\leq n$ and a tuple of variables $x=(x_1,\ldots,x_n)$.  For any increasing map $\lambda : \{1,\ldots,m\}\to\{1,\ldots,n\}$, define the coordinate projection $\Pi_\lambda:\RR^n\to\RR^m$ by $\Pi_\lambda(x):=(x_{\lambda(1)},\ldots,x_{\lambda(m)})$, with the understanding that when $m=0$ one uses the conventions that $\RR^0 := \{0\}$ and $\Pi_\emptyset(x) := 0$, where $\emptyset$ denotes the empty map.  An important special case is when $\lambda(i) = i$ for all $i\in\{1,\ldots,m\}$, and in this case we will write $\Pi_m$ instead of $\Pi_\lambda$.  (Thus $\Pi_0 = \Pi_\emptyset$.)  We also use the notation $x_{\leq m} = (x_1,\ldots,x_m)$, $x_{<m} = (x_1,\ldots,x_{m-1})$ and $x_{>m} = (x_{m+1},\ldots,x_n)$.  This notation can be applied to components of maps as well, for example, if $f = (f_1,\ldots,f_n)$ for some real-valued functions $f_i$, then $f_{\leq m} = (f_1,\ldots,f_m)$.  If $A\subset\RR^n$ and $0\leq m \leq n$, we write $A_{x_{\leq m}} = \{x_{>m} : (x_{\leq m}, x_{>m})\in A\}$ for the fiber of $A$ over $x_{\leq m}\in\RR^m$. Further, we write $\im(g)$ for the image of a function $g$.
\end{notation}

\begin{definitions}\label{def:subPrep0}
Call a function $f:X\subset \RR^\ell\to \RR^k$ {\bf analytic} if it
extends to an analytic function on an open neighborhood of $X$.
An analytic function $u:A\to\RR$ on a set $A\subset\RR^n$ is a {\bf unit on $A$} if either $u(x) > 0$ on $A$ or $u(x) < 0$ on $A$.
A {\bf restricted analytic
function} is a function $f:\RR^n\to\RR$ such that the restriction of
$f$ to $[-1,1]^n$ is analytic and $f(x)=0$ on
$\RR^n\setminus[-1,1]^n$.

Recall from the introduction that we call a set or a function  {\bf subanalytic} if and only if it is definable in the expansion of the real field by all restricted analytic functions.  Thus in this paper, ``subanalytic'' is an abbreviation of ``globally subanalytic'', and in this meaning, the natural logarithm $\log:(0,+\infty)\to\RR$ is not subanalytic.

A {\bf subanalytic term} is a function which can be constructed as a
finite composition of restricted analytic functions, the
algebraic operations of addition and multiplication, and
the rational power functions which are defined on $\RR$ by
\[
x\mapsto
\begin{cases}
x^r & \text{if $x\geq 0$},\\
0   & \text{if $x < 0$},
\end{cases}
\]
for each rational number $r$.  We shall also use this terminology for restrictions of subanalytic terms to subanalytic sets.
\end{definitions}

For the rest of the section we fix an ordered list of variables $x_1,\ldots,x_{n+1}$, where $n\geq 0$, and we write $x$ for $(x_1,\ldots,x_n)$ and write $y$ for $x_{n+1}$, since the variable $x_{n+1}$ will play a special role.

\begin{definitions}\label{def:subPrep}
A set $A\subset\RR^{n+1}$ is a {\bf subanalytic cylinder} if
\begin{renumerate}
\item
$n = 0$, and $A$ is of one of the following four forms:
\begin{description}
\item[Form 1]
$A = \{a\}$,

\item[Form 2]
$A = (a,+\infty)$,

\item[Form 3]
$A = (-\infty,a)$,

\item[Form 4]
$A = (a,b)$,
\end{description}
where $a<b$ are real numbers, or

\item
$n > 0$, and $B = \Pi_{n}(A)$ is a subanalytic set defined in a quantifier-free manner using subanalytic terms and the relations $=$ and $<$, and $A$ is of one of the following four forms:
\begin{description}
\item[Form 1]
$A = \{(x,y)\in B\times\RR : y = a(x)\}$,

\item[Form 2]
$A = \{(x,y)\in B\times\RR : y > a(x)\}$,

\item[Form 3]
$A = \{(x,y)\in B\times\RR : y < a(x)\}$,

\item[Form 4]
$A = \{(x,y)\in B\times\RR : a(x) < y < b(x)\}$,
\end{description}
where $a,b:B\to\RR$ are analytic subanalytic terms and $a(x) < b(x)$ on $B$. We call $B$ the {\bf base} of $A$.
\end{renumerate}
We say that $A$ is {\bf thin} (in $y$) if it is of the Form 1 and that $A$ is {\bf fat} (in $y$) if it is of the Form 2, 3, or 4.

If $A$ is a fat subanalytic cylinder with base $B$, a {\bf center
for $A$} is an analytic subanalytic term $\theta:B\to\RR$ whose
graph is disjoint from $A$.

If $A$ is a fat subanalytic cylinder and $\theta$ is a center for $A$, a {\bf
strong subanalytic unit on $A$ with center $\theta$} is a function
$u:A\to\RR$ of the form $u = U\circ\varphi$, where
$\varphi:A\to\RR^N$ is a bounded function for some natural number $N$, $U$ is an analytic unit on the
closure of the image of $\varphi$, and $\varphi$ has the form
\[
\varphi(x,y) =
(a_1(x)|y-\theta(x)|^{r_1},\ldots,a_N(x)|y-\theta(x)|^{r_N}),
\]
where $a_1,\ldots,a_N$ are analytic subanalytic terms on the base of $A$
and $r_1,\ldots,r_N$ are rational numbers (some, or all, of which may be equal to $0$).

For any real-valued functions $f$ and $g$ on a
set $A$, we say that {\bf $f$ is  equivalent to $g$ on $A$},
written $f\sim  g$ on $A$, if there exists $\epsilon>1$ such that for all $x\in A$,
\[
\begin{cases}
\epsilon^{-1}f(x)\leq g(x) \leq \epsilon f(x), &  \text{if
$f(x)\geq 0$},\\
\epsilon f(x) \leq g(x) \leq \epsilon^{-1} f(x), &  \text{if $f(x) <
0$}.
\end{cases}
\]
We write $f\sim_\epsilon g$ on $A$ if we want to specify $\epsilon$.

A partition $\P$ of a set $X$ is called {\bf compatible} with a set $\S$ of
subsets of $X$ if for all $P\in\P$ and all $S\in\S$ either
$P\subset S$ or $P\cap S = \emptyset$.
\end{definitions}

\begin{theorem}[Subanalytic Preparation Theorem \cite{LR97}, \cite{DJMprep}]\label{thm:subPrep}
Let $\F$ be a finite set of real-valued subanalytic functions on a
subanalytic set $X\subset\RR^{n+1}$, let $\S$ be a finite set of
subanalytic subsets of $X$, and let $\epsilon
> 1$.  There exists a finite partition of $X$ into subanalytic
cylinders which is compatible with $\S$ and is such that the
following hold for each cylinder $A$ in this partition:
\begin{renumerate}{\setlength{\itemsep}{3pt}
\item
If $A$ is thin, then for each $f\in\F$ there exists an analytic
subanalytic term $t:B\to\RR$ such that $f(x,y) = t(x)$ on $A$.

\item
If $A$ is fat, then there exists a center $\theta$ for $A$ such that
each $f\in\F$ can be written in the form
\[
f(x,y) = a(x)|y-\theta(x)|^r u(x,y)
\]
on $A$, where $a$ is an analytic subanalytic term on the base of $A$, $r$ is a
rational number, and $u$ is a strong subanalytic unit on $A$ with
center $\theta$.  Moreover, if $\theta$ is not identically zero,
then $y\sim_\epsilon \theta$ on $A$.
}
\end{renumerate}
\end{theorem}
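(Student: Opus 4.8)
The plan is to prove the theorem by induction on $n$, reducing at each stage to the preparation of subanalytic functions in the single distinguished variable $y = x_{n+1}$ while treating $x = (x_1,\ldots,x_n)$ as parameters. Before the induction proper I would dispose of two preliminary reductions. Compatibility with $\S$ is handled by adjoining to $\F$ finitely many subanalytic functions whose zero sets and sign conditions cut out the members of $\S$, so that any cylindrical partition adapted to the enlarged $\F$ automatically refines $\S$. The passage from arbitrary subanalytic functions to terms uses the standard structure theory of subanalytic functions (definability in the real field with restricted analytic functions): after a finite subanalytic partition of its domain, each subanalytic function is given by a single subanalytic term in the sense of Definitions \ref{def:subPrep0}. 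Thus I may assume every $f \in \F$ is a subanalytic term, and I would run a secondary induction on the syntactic complexity of these terms.

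The analytic heart is the case $n = 0$, where each $f$ is a one-variable subanalytic function on a subset of $\RR$. Such a function is real-analytic off a finite set and, near each finite or infinite endpoint of a maximal interval of analyticity, admits a convergent Puiseux expansion. I would partition $\RR$ into finitely many points (the thin cylinders of Form~1) and open intervals (the fat cylinders of Forms~2--4) so that on each open interval every $f\in\F$ is analytic and either vanishes identically or is nonvanishing, and so that the relevant endpoint $\theta$ can serve as a center. Writing the Puiseux expansion at $\theta$ as $f = a\,|y-\theta|^r\,U$ with $U$ analytic and nonzero at the origin exhibits $f$ in the prepared form $a|y-\theta|^r u$, and $U$ composed with a bounded map of the shape $y \mapsto |y-\theta|^{\pm 1/N}$ is exactly a strong subanalytic unit. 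The requirement $y \sim_\epsilon \theta$ when $\theta \neq 0$ is met by a final refinement: on the portion where $y$ is multiplicatively comparable to the chosen center we subdivide along finitely many curves $y = \lambda\theta$ so that $y/\theta$ stays within $(\epsilon^{-1},\epsilon)$, and on the remaining portion we take the center identically zero, where the condition is vacuous.

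For the inductive step $n > 0$ I would treat $x$ as a parameter and seek the monomial-times-unit form uniformly in $x$. The engine is the Weierstrass preparation theorem applied to the restricted analytic functions occurring in the term defining $f$, after transforming them to normal crossings by resolution of singularities; this yields, over a finite subanalytic partition of the base $B$, a center $\theta(x)$ and a rational exponent $r$ that is constant on each piece, with $f(x,y) = a(x)|y-\theta(x)|^r u(x,y)$ and $u$ a strong unit. The products and sums encountered in the secondary induction on term complexity are re-prepared by comparing the leading exponents of already-prepared factors, factoring out the dominant rational monomial in $|y-\theta|$, and absorbing the remaining lower-order contributions into the unit via the equivalence $\sim$. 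Finally I would invoke the inductive hypothesis in $n$ variables on the coefficient terms $a$, the center $\theta$, the base-defining data, and the components $a_i,r_i$ of the map $\varphi$ of Definitions \ref{def:subPrep} defining each strong unit, so that all of these are genuine analytic subanalytic terms on honest subanalytic cylinders.

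The step I expect to be the main obstacle is exactly this uniformity across parameters: guaranteeing that a single rational exponent $r$ works on each cell and that the center $\theta$ is a bona fide subanalytic term rather than merely a measurable selection, simultaneously for all $f\in\F$. This is where resolution to normal crossings together with Weierstrass division carries the full weight, and where the combinatorial bookkeeping of exponents and strong units, needed to keep the prepared form stable under the sums and products arising in the term induction, is most delicate. By contrast, arranging the prescribed equivalence $y \sim_\epsilon \theta$ is a comparatively routine final refinement.
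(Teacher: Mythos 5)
The first thing to say is that the paper contains no proof of Theorem \ref{thm:subPrep} to compare against: it is imported as a black box from \cite{LR97} and \cite{DJMprep} (the latter being the second author's detailed account), and the paper only proves refinements built on top of it (Lemma \ref{lemma:simulPrep}, Theorems \ref{thm:cellPrep} and \ref{thm:iteratedCellPrep}). So your proposal must be judged against the literature rather than against anything in this text. Your broad outline --- reduce to terms, induct on $n$ and on term complexity, Puiseux in the base case, Weierstrass preparation in the inductive step --- does match the general shape of the known proofs, and your closing paragraph correctly identifies where the weight lies. But as written there are two genuine gaps, and they are precisely at the points you defer to ``resolution plus Weierstrass.''

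First, resolution of singularities to normal crossings does not respect the projection $(x,y)\mapsto x$. The conclusion of the theorem is rigid about this: the partition must consist of \emph{cylinders over subanalytic subsets of $\RR^n$ in the fixed last variable $y$}, the center $\theta$ must be a term in $x$ alone, and the unit must be a function of bounded monomials in $|y-\theta(x)|$ with $x$-dependent coefficients. A blow-up of the total space mixes $x$ and $y$ and destroys all of this; one cannot simply resolve and then ``invoke the inductive hypothesis on the coefficient data.'' The actual arguments either apply Weierstrass preparation/division in the distinguished variable $y$ to produce a distinguished polynomial and then compare $y$ to its real and complex roots (whose real parts and moduli furnish the candidate centers $\theta$, as subanalytic terms, after a partition of the base), or use a resolution that is carefully local and fibered over $B$. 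Your sketch does not explain how the cylindrical structure survives. Second, the re-preparation of \emph{sums} is not a matter of ``comparing the leading exponents of already-prepared factors'': two prepared summands generically come with two \emph{different} centers $\theta_1\neq\theta_2$, and there is no common monomial scale in which to compare them until one has subdivided $A$ according to the relative positions of $y$, $\theta_1$, $\theta_2$ --- exactly the trichotomy the paper isolates as Lemma \ref{lemma:simulPrep}. Moreover, absorbing a remainder into the unit is not achieved ``via the equivalence $\sim$'': a strong subanalytic unit must have the specific form $U\circ\varphi$ with $\varphi$ a bounded monomial map in $|y-\theta|$, so one must verify that $1+(\text{remainder})/(\text{leading term})$ is literally of that form, which again forces the multi-center case analysis. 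These two omissions are the combinatorial heart of the theorem, so the proposal, while pointed in the right direction, does not yet constitute a proof.
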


\begin{remark}\label{rmk:unbddFiberCylinder}
In Theorem \ref{thm:subPrep} the requirement that $y\sim_\epsilon \theta$ on $A$ if $\theta$ is not identically $0$ implies that if each of the fibers of $A$ over $\Pi_n(A)$ are unbounded (namely, $A$ is of the Form 2 or 3 of Definitions \ref{def:subPrep}), then $\theta = 0$.
\end{remark}

In addition to the subanalytic preparation theorem,
we will need the following two lemmas.  The first is a reformulation of Lemma 3.4 from \cite{DJMprep}, and the second is a strengthening of Lemma 4.6 from \cite{DJMprep}.

\begin{lemma}[\cite{DJMprep}]\label{lemma:strongUnit}
Let $A\subset\RR^{n+1}$ be a subanalytic cylinder with center $\theta$, let $u$ be a strong subanalytic unit on $A$ with center $\theta$, and let $r$ be a rational number.  There exists a finite partition of $\Pi_n(A)$ into subanalytic cylinders such that for each cylinder $B$ in the partition there exist a natural number $N$ and functions $\varphi:A\cap(B\times\RR)\to\RR^{N+2}$ and $U:\RR^{N+2}\to\RR$
such that $u(x,y) = U\circ\varphi(x,y)$ on $A\cap(B\times\RR)$, $U$ is an analytic unit on the closure of the image of $\varphi$, and $\varphi$ is a bounded function of the form
\[
\varphi(x,y) =
(a_1(x),\ldots,a_N(x),b(x)|y-\theta(x)|^{1/p},c(x)|y-\theta(x)|^{-1/p}),
\]
where $a_1,\ldots,a_N,b,c$ are analytic subanalytic terms on $B$
and $p$ is a positive integer such that $rp$ is an integer.
\end{lemma}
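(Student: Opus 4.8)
The plan is to collapse the many rational exponents $r_1,\dots,r_N$ occurring in $\varphi$ down to the single pair $\pm 1/p$ by a common-denominator substitution, and then to re-express the unit through two ``master variables''. Choose a positive integer $p$ that is a common denominator of $r_1,\dots,r_N$ and of $r$, so that $e_i:=r_i p\in\ZZ$ for all $i$ and $rp\in\ZZ$, and put $w:=|y-\theta(x)|^{1/p}$, so that the $i$-th component of $\varphi$ becomes $a_i(x)\,w^{e_i}$. I would then seek analytic subanalytic terms $b,c$ on pieces of the base for which $s:=b(x)\,w$ and $t:=c(x)\,w^{-1}$ are bounded on $A$. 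The algebraic heart of the matter is the identity $st=b(x)c(x)$, a function of $x$ alone: it lets me express each integer power $w^{e}$ as $b^{-e}s^{e}$ when $e\ge 0$ and as $c^{\,e}t^{-e}$ when $e<0$, i.e.\ as a monomial in $s$ or $t$ with an $x$-coefficient.

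To obtain $b$ and $c$ as genuine analytic subanalytic terms I would first partition $\Pi_n(A)$ into subanalytic cylinders, applying Theorem \ref{thm:subPrep} to the defining data of $A$ and to $\theta$, so that on each piece the supremum $W(x)$ and infimum $E(x)$ of $|y-\theta(x)|$ over the fiber $A_x$ are given by analytic subanalytic terms (the relevant boundary function minus $\theta$), or are identically $+\infty$, respectively $0$. Since $\theta$ is a center, its graph misses $A$, so these extrema are attained at the fiber endpoints and are indeed such terms. On a piece with $W<\infty$ and $E>0$ I would set $b:=W^{-1/p}$ and $c:=E^{1/p}$; then $s$ and $t$ take values in $(0,1]$, and the coefficients $\alpha_i:=a_i b^{-e_i}$ (for $e_i>0$) and $\gamma_i:=a_i c^{\,e_i}$ (for $e_i<0$) are bounded, being the limiting values of the bounded quantities $a_i(x)w^{e_i}$ as $w\to\sup w$, respectively $w\to\inf w$. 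Where $W=+\infty$, boundedness of $a_i(x)w^{e_i}$ over an unbounded range of $w$ forces $a_i\equiv 0$ for every $e_i>0$ (and $\theta=0$ there by Remark \ref{rmk:unbddFiberCylinder}); likewise, where $E=0$ one gets $a_i\equiv 0$ for every $e_i<0$. In these degenerate cases I would simply set $b:=0$, respectively $c:=0$, discarding the corresponding, now identically zero, components.

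It then remains to reassemble the unit. Collecting the bounded $x$-terms $\alpha_i$, $\gamma_i$ and the $e_i=0$ coefficients $a_i$ as the coordinates $a_1,\dots,a_N$, together with $s=b(x)|y-\theta(x)|^{1/p}$ and $t=c(x)|y-\theta(x)|^{-1/p}$ as the last two coordinates, produces a bounded map $\varphi$ of the required shape into $\RR^{N+2}$. By construction each old component $a_i(x)w^{e_i}$ equals a monomial $M_i$ in the new coordinates ($\alpha_i s^{e_i}$, $\gamma_i t^{-e_i}$, or $a_i$), so the polynomial map $M=(M_i)_i$ satisfies $M\circ\varphi=\varphi^{\mathrm{old}}$ on $A\cap(B\times\RR)$. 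Setting $U:=U^{\mathrm{old}}\circ M$, a composition of the polynomial $M$ with the original analytic unit $U^{\mathrm{old}}$, I get $u=U^{\mathrm{old}}\circ\varphi^{\mathrm{old}}=U\circ\varphi$; and since $M$ is continuous with $M\big(\overline{\im\varphi}\big)\subseteq\overline{\im\varphi^{\mathrm{old}}}$, the function $U$ is analytic and nonvanishing, hence an analytic unit, on $\overline{\im\varphi}$. This is exactly the asserted presentation of $u$ on each piece $B$ of the partition.

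The main obstacle is the middle step: arranging, after a subanalytic partition of the base, that the scaling factors $b$ and $c$ are honest analytic subanalytic terms for which both $\varphi$ and the coefficients $\alpha_i,\gamma_i$ are bounded. This rests on the geometric fact that the extrema of $|y-\theta(x)|$ over a fiber are attained at the boundary, hence are analytic subanalytic terms, together with the dichotomy that an infinite supremum annihilates the positive exponents and a vanishing infimum annihilates the negative ones. Once this is secured, re-expressing $u$ through the two master variables and checking that $U^{\mathrm{old}}\circ M$ is again an analytic unit are routine.
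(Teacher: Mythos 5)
Your argument is correct. Note that the paper itself gives no proof of this lemma; it is imported verbatim as a reformulation of Lemma 3.4 of \cite{DJMprep}, so there is no in-paper argument to compare against. Your proof is essentially the standard one: pass to a common denominator $p$ of $r_1,\ldots,r_N,r$, use that $\theta$'s graph misses the cylinder to see that $|y-\theta(x)|$ is monotone on each fiber, so its fiberwise extrema $E(x)\leq W(x)$ are (after partitioning the base into cylinders and separating the loci $\{E=0\}$, $\{E>0\}$ and the bounded/unbounded fiber cases) analytic subanalytic terms; then rescale to the two bounded master monomials $b(x)|y-\theta(x)|^{1/p}$ and $c(x)|y-\theta(x)|^{-1/p}$ with $b=W^{-1/p}$, $c=E^{1/p}$, absorb the resulting monomial substitution into the outer analytic unit, and observe that an infinite $W$ (resp.\ vanishing $E$) kills the coefficients of the positive (resp.\ negative) exponents. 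All the steps you flag as delicate --- boundedness of the rescaled coefficients $a_iW^{e_i/p}$, $a_iE^{e_i/p}$ as limits of the bounded components of the original $\varphi$, and that $U\circ M$ remains an analytic unit on the closure of the image of the new $\varphi$ --- check out.
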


\begin{lemma}\label{lemma:simulPrep}
Let $\theta_1$ and $\theta_2$ be real-valued subanalytic terms
on $\Pi_n(X)$ for a subanalytic set $X\subset\RR^{n+1}$.  There
exists a finite partition of $X$ into subanalytic cylinders such
that for each fat cylinder $A$ in this partition, $\theta_1$ and
$\theta_2$ are both centers for $A$ such that $\theta_1 - \theta_2$ has a constant sign on $\Pi_n(A)$,
and when $\theta_1\neq\theta_2$ on $\Pi_n(A)$, at least one of the following three cases hold on $A$, where the expressions in square brackets are strong subanalytic units on $A$ (with center $\theta_2$ in case (i), and center $\theta_1$ in cases (ii) and (iii)):

\begin{renumerate}{\setlength{\itemsep}{3pt}
\item
$\displaystyle y - \theta_1(x) = (\theta_2(x) - \theta_1(x))\cdot
\left[1 + \frac{y-\theta_2(x)}{\theta_2(x) - \theta_1(x)}\right]$,

\item
$\displaystyle y - \theta_2(x) = (\theta_1(x) - \theta_2(x))\cdot
\left[1 + \frac{y-\theta_1(x)}{\theta_1(x) - \theta_2(x)}\right]$,

\item
$\displaystyle
y - \theta_2(x) = (y - \theta_1(x))\cdot \left[1 +
\frac{\theta_1(x) - \theta_2(x)}{y-\theta_1(x)}\right]$.
}
\end{renumerate}
\end{lemma}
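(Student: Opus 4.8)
The plan is to reduce the whole statement to a one-variable analysis in $y$ controlled by the single subanalytic function
\[
s(x,y)=\frac{y-\theta_1(x)}{\theta_1(x)-\theta_2(x)},
\]
defined wherever $\theta_1\neq\theta_2$, and to partition $X$ according to the size of $s$. First I would set up the coarse data by one application of the Subanalytic Preparation Theorem~\ref{thm:subPrep} to $X$ with $\S$ containing $\Graph(\theta_1)$, $\Graph(\theta_2)$ and the three base sets $\{\theta_1>\theta_2\}$, $\{\theta_1<\theta_2\}$, $\{\theta_1=\theta_2\}$ (pulled back to $\RR^{n+1}$). On each resulting fat cell, $\theta_1$ and $\theta_2$ become analytic subanalytic terms whose graphs are disjoint from the cell, so both are centers, and $\theta_1-\theta_2$ has constant sign on the base. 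Over the locus $\theta_1=\theta_2$ there is nothing more to prove, so from here on I work where $\theta_1\neq\theta_2$, writing $g=|\theta_1-\theta_2|>0$.

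Next I fix thresholds $M\ge 2$ and $0<\delta<1$ and adjoin to $\S$ the graphs of the six analytic subanalytic terms $\theta_1,\ \theta_2,\ \theta_1\pm Mg,\ \theta_2\pm\delta g$ (each a subanalytic-term combination of $\theta_1,\theta_2$), then re-run Theorem~\ref{thm:subPrep}. Since $s$ is monotone in $y$, a fat cylinder of the output has a fibre that is an open interval avoiding all six cut-values, hence lying in a single strip; translating the cut-values back through $s$, each such fibre lands in exactly one of the three regions
\[
R_{\mathrm{i}}=\{\,|s+1|\le\delta\,\},\qquad R_{\mathrm{ii}}=\{\,|s|\le M,\ |s+1|\ge\delta\,\},\qquad R_{\mathrm{iii}}=\{\,|s|\ge M\,\},
\]
which together cover every value of $s$. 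Moreover every such cylinder still has $\theta_1,\theta_2$ as centers and, its fibre being an interval disjoint from the graphs of $\theta_1,\theta_2$, has $y-\theta_1$, $y-\theta_2$ and $\theta_1-\theta_2$ all of constant sign.

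It then remains to read the three identities off the elementary relations $y-\theta_1=(\theta_1-\theta_2)s$ and $y-\theta_2=(\theta_1-\theta_2)(s+1)$. On $R_{\mathrm{i}}$ I invoke case (i): its bracket equals $-s$, and its variable term $\tfrac{y-\theta_2}{\theta_2-\theta_1}=a(x)|y-\theta_2|$ (with $a$ an analytic subanalytic term, the signs of $y-\theta_2$ and $\theta_2-\theta_1$ being fixed) is bounded by $\delta$, so $U(w)=1+w$ is a unit on $[-\delta,\delta]$ since $\delta<1$; thus the bracket is a strong subanalytic unit with center $\theta_2$. On $R_{\mathrm{ii}}$ I invoke case (ii): its bracket is $1+s$ with variable term $s=a(x)|y-\theta_1|$ bounded by $M$, and by the gap $|s+1|\ge\delta$ the image of $s$ stays on one side of $-1$ on the (interval) fibre, so $U(w)=1+w$ has constant sign there — a strong unit with center $\theta_1$. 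On $R_{\mathrm{iii}}$ I invoke case (iii): its bracket is $1+\tfrac1s=1+\tfrac{\theta_1-\theta_2}{y-\theta_1}$ with variable term $c(x)|y-\theta_1|^{-1}$ bounded by $1/M$, and $U(w)=1+w$ is a unit on $[-1/M,1/M]$ since $M>1$ — a strong unit with center $\theta_1$.

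The main obstacle is not the partition itself but the uniform verification just sketched: one must keep $y-\theta_1$, $y-\theta_2$ and $\theta_1-\theta_2$ of constant sign on each single cylinder so that the ratios legitimately take the prescribed form $a(x)|y-\theta_i|^{r}$, and one must choose the thresholds so that $U=1+(\cdot)$ is nonvanishing of constant sign on the closure of the image of its argument. This is exactly what forces $M>1$ and $\delta<1$, what distinguishes cases (i), (ii), (iii), and what the separation of $R_{\mathrm{ii}}$ from the value $s=-1$ accomplishes. The remaining, routine point is that $\theta_1,\theta_2$ (hence $a,c,g^{-1}$) are genuine analytic subanalytic terms on each base, which is secured by the initial application of Theorem~\ref{thm:subPrep}.
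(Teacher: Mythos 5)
Your proposal is correct and follows essentially the same route as the paper: both arguments reduce to the normalized coordinate $s=(y-\theta_1)/(\theta_1-\theta_2)$ and cut its range into finitely many bands (the paper uses four overlapping regions governed by constants $\tfrac12<a<1<b<1+a$, you use three governed by $\delta<1\le M$), then check on each band that the relevant bracket $1+(\cdot)$ has its argument bounded with image whose closure avoids the zero of $1+w$, hence is a strong subanalytic unit with the stated center. The only cosmetic difference is your threshold bookkeeping and the explicit second pass through Theorem \ref{thm:subPrep}; the mathematical content coincides.
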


\begin{proof}
By partitioning $X$ we may focus on a fat cylinder $A$ such that $\theta_1 - \theta_2$ has constant sign on $\Pi_n(A)$.  We are done if $\theta_1 = \theta_2$ on $\Pi_n(A)$, and the cases $\theta_1 > \theta_2$ and $\theta_1 < \theta_2$ can be handled similarly, so we assume that $\theta_1 > \theta_2$ on $\Pi_n(A)$.  Choose constants $a$ and $b$ such that $\frac{1}{2} < a < 1 < b < 1+a$, and consider the following sets, each of which is a finite union of cylinders:
\begin{eqnarray*}
A_1
    & = &
    \left\{(x,y)\in A : \frac{y - \theta_2(x)}{\theta_1(x) - \theta_2(x)} > b\right\},\\
A_2
    & = &
    \left\{(x,y)\in A : \left|\frac{y - \theta_1(x)}{\theta_1(x) - \theta_2(x)}\right| < a\right\}, \\
A_3
    & = &
    \left\{(x,y)\in A : \left|\frac{y - \theta_2(x)}{\theta_1(x) - \theta_2(x)}\right| < a\right\}, \\
A_4
    & = &
    \left\{(x,y)\in A : \frac{y - \theta_1(x)}{\theta_1(x) - \theta_2(x)} < -b\right\}. \\
\end{eqnarray*}
By the choice of $a$ and $b$, these sets cover $A$.  The expression in square brackets in (i) is clearly a strong subanalytic unit on $A_3$, and likewise for (ii) on $A_2$.  A little algebra shows that
\[
0 < \frac{\theta_1(x) - \theta_2(x)}{y - \theta_1(x)} < \frac{1}{b-1}
\]
on $A_1$, and that
\[
-\frac{1}{b} < \frac{\theta_1(x) - \theta_2(x)}{y - \theta_1(x)} < 0
\]
on $A_4$.  Therefore the expression in square brackets in (iii) is a strong subanalytic unit on both $A_1$ and $A_4$.
\end{proof}

Note that the center $\theta_1$ plays a special role in Lemma \ref{lemma:simulPrep}, in the sense that when $y-\theta_1(x)$ and $y-\theta_2(x)$ are equivalent up to multiplication by a strong subanalytic unit, this unit is always constructed to have center $\theta_1$.

\begin{remark}\label{rmk:simulPrep}
In case (iii) of Lemma \ref{lemma:simulPrep} the definition of strong subanalytic units shows that $\frac{\theta_1(x) - \theta_2(x)}{y-\theta_1(x)}$ is bounded on $A$,
and also $\theta_1(x) - \theta_2(x)$ has a constant nonzero sign on the base of $A$, from which it follows that there exists a constant $c$ such that
$|y-\theta_1(x)| > c(\theta_1(x)-\theta_2(x)) > 0$ on $A$.
\end{remark}

\section{Cell preparation of subanalytic and constructible functions}\label{s:cellPrep}

This section gives three variants of the subanalytic preparation
theorem. They prepare functions on cells,
rather than on cylinders, in such a specific way that we call them cell preparation theorems. The first two prepare (finite collections of) subanalytic functions, and the third prepares constructible functions.

\begin{definition}\label{def:cell}
A {\bf subanalytic cell} is a set $A\subset\RR^n$ such that
$\Pi_i(A)$ is a subanalytic cylinder for all $i\in\{1,\ldots,n\}$.
There exists a unique increasing function
$\lambda:\{1,\ldots,d\}\to\{1,\ldots,n\}$ whose image is the set
$\{i\in\{1,\ldots,n\} : \text{$\Pi_i(A)$ is fat}\}$.  We call $A$ a
{\bf $\lambda$-cell} if we want to specify $\lambda$.
For any $I \subset \im(\lambda)$, we say that $A$ is {\bf fat in $(x_i)_{i\in I}$}.
\end{definition}

In the previous definition, $A$ is clearly a connected analytic
submanifold of $\RR^n$ of dimension $d$, and the projection
$\Pi_\lambda:A\to\RR^d$
is an analytic isomorphism onto its image, which is an open subanalytic cell in $\RR^d$.

\begin{definition}\label{def:center}
Suppose that $A\subset\RR^n$ is an open subanalytic cell.  For each $i\in\{1,\ldots,n\}$, let $\theta_i:\Pi_{i-1}(A)\to\RR$ be an analytic subanalytic term, and write $\tld{x}_i := x_i - \theta_i(x_1,\ldots,x_{i-1})$.   If $\tld{x}_i\not\in\{-1,0,1\}$ for all $i\in\{1,\ldots,n\}$ and all $x\in A$, then we call $\theta = (\theta_1,\ldots,\theta_n) :A\to\RR^n$ a {\bf center for $A$} and call $\tld{x} = (\tld{x}_1,\ldots,\tld{x}_n)$ the {\bf coordinates for $A$ with center $\theta$}.  If $\theta_1=\cdots=\theta_n=0$, we simply say that ``$0$ is a center for $A$''.

More generally, suppose that $A\subset\RR^n$ is a $d$-dimensional $\lambda$-cell.   A {\bf center for $A$} is a center $\theta$ for $\Pi_\lambda(A)$, and the {\bf coordinates for $A$ with center $\theta$} are the coordinates for $\tld{x}$ for $\Pi_\lambda(A)$ with center $\theta$.
\end{definition}

\begin{notation}
Suppose that $A\subset\RR^n$ is a $d$-dimensional $\lambda$-cell with center $\theta$, and that $\tld{x}$ are the coordinates for $A$ with center $\theta$.
We index $\theta$ and $\tld{x}$ by $\im(\lambda)$ rather than $\{1,\ldots,d\}$, writing $\theta = (\theta_{\lambda(1)},\ldots,\theta_{\lambda(d)})$ and $\tld{x} = (\tld{x}_{\lambda(1)},\ldots,\tld{x}_{\lambda(d)})$, and considering each $\theta_{\lambda(i)}$ to be a function of $(x_{\lambda(1)},\ldots,x_{\lambda(i-1)})$.  For any tuple $\alpha = (\alpha_i)_{i\in\im(\lambda)}$ of rational numbers, let
\[
|\tld{x}|^{\alpha}  = \prod_{i\in\im(\lambda)}|\tld{x}_{i}|^{\alpha_i}.
\]
We shall write $\QQ^{\im(\lambda)}$ for tuples of rational numbers indexed by $\im(\lambda)$, and likewise for $\RR^{\im(\lambda)}$.  If $J\subset\im(\lambda)$ and $\alpha\in\QQ^{\im(\lambda)}$ are such that $\alpha_i = 0$ for all $i\in\im(\lambda)\setminus J$, we say that $\alpha$ {\bf has support in $J$}.
\end{notation}

\begin{definitions}\label{def:Abd}
Let $A\subset\RR^n$ be a $d$-dimensional subanalytic $\lambda$-cell with center $\theta$.  Then for all $i\in\im(\lambda)$, the set $\{\tld{x}_i : x\in A\}$ is contained in either $(-\infty,-1)$, $(-1,0)$, $(0,1)$, or $(1,+\infty)$, so there exist unique $\varepsilon_i,\zeta_i\in\{-1,1\}$ such that $0 < \varepsilon_i \tld{x}_{i}^{\zeta_i} < 1$ for all $x\in A$.  Define
\[
A_{\bd} = \left\{(\varepsilon_i \tld{x}_{i}^{\zeta_i})_{i\in\im(\lambda)} : x\in A\right\}.
\]
Define the isomorphism $F^{[\theta]}_{A}:A\to A_{\bd}$ by $F^{[\theta]}_{A}(x) = (\varepsilon_i \tld{x}_{i}^{\zeta_i})_{i\in\im(\lambda)}$, and define $G^{[\theta]}_{A}:A_{\bd}\to A$ to be the inverse of $F^{[\theta]}_{A}$.  We consider $A_{\bd}$ to be a subset of $\RR^d$ (rather than of $\RR^{\im(\lambda)}$), and write $y = (y_1,\ldots,y_d) = F_{A}^{[\theta]}(x)$ for $x\in A$.  So
\[
y_i = \varepsilon_{\lambda(i)} \tld{x}_{\lambda(i)}^{\,\,\,\zeta_{\lambda(i)}}
\]
for each $i\in\{1,\ldots,d\}$.  Clearly $A_{\bd}$ is an open subanalytic cell in $(0,1)^d$ with center $0$.  Write
\begin{equation}\label{eq:Abd}
\Pi_i(A_{\bd}) = \{y_{\leq i} : y_{<i}\in\Pi_{i-1}(A_{\bd}), a_i(y_{<i}) < y_i < b_i(y_{<i})\}
\end{equation}
for each $i\in\{1,\ldots,d\}$.
\begin{enumerate}{\setlength{\itemsep}{3pt}
\item
A {\bf strong subanalytic unit on $A$ with center $\theta$} is a function of the form $u = U\circ\varphi$, where $\varphi:A\to\RR^N$ is a bounded function of the form
\[
\varphi(x) = (|\tld{x}|^{\beta_1},\ldots,|\tld{x}|^{\beta_N})
\]
for some natural number $N$ and $\beta_1,\ldots,\beta_N\in\QQ^{\im(\lambda)}$, and $U$ is an analytic unit on the closure of the image of $\varphi$.

\item
For $i\in\{1,\ldots,d\}$, call $\tld{x}_{\lambda(i)}$ {\bf asymptotically determined on $A$} if there exists $C > 0$ such that $b_i(y_{<i}) < C a_i(y_{<i})$ on $\Pi_{i-1}(A_{\bd})$.  Otherwise call $\tld{x}_{\lambda(i)}$ {\bf asymptotically undetermined}.

\item
Let $\F$ be a finite family of subanalytic functions on $A$, and consider a set
\[
J \subset \{i \in\im(\lambda): \text{$\tld{x}_i$ is asymptotically undetermined on $A$}\}.
\]
By a joint induction on $d$, we define what it means for $A$ to be {\bf $J$-prepared with center $\theta$} and what it means for $\F$ to be {\bf $J$-prepared on $A$ with center $\theta$}.

If $d = 0$, then $A$ is a singleton and $J = \theta = \emptyset$, and $A$ and $\F$ are called $\emptyset$-prepared with center $\emptyset$.  For $d\geq 1$, say that $A$ is $J$-prepared with center $\theta$ if the collection of functions $\{a_d,b_d,b_d-a_d\}$ is $\{1,\ldots,d-1\}\cap \lambda^{-1}(J)$-prepared on $\Pi_{d-1}(A_{\bd})$ with center $0$ (or center $\emptyset$ when $d=1$), and if the closure of the image of $a_d$ contains $0$.  We say that $\F$ is $J$-prepared on $A$ with center $\theta$ if $A$ is $J$-prepared with center $\theta$, and if for each $f\in\F$ either $f=0$ on $A$ or
\[
f(x) = |\tld{x}|^\alpha u(x)
\]
on $A$ for some strong subanalytic unit $u$ on $A$ with center $\theta$ and some $\alpha\in\QQ^{\im(\lambda)}$ with support in $J$.

When $J = \{i \in\im(\lambda): \text{$\tld{x}_i$ is asymptotically undetermined on $A$}\}$, we simply say that $A$ (or $\F$) is prepared with center $\theta$ (on $A$).

\item
For $i\in\{1,\ldots,d\}$, call $\tld{x}_{\lambda(i)}$ {\bf constrained on $A$} if $a_i > 0$ on $\Pi_{i-1}(A_{\bd})$, and call $\tld{x}_{\lambda(i)}$ {\bf unconstrained on $A$} if $a_i = 0$ on
$\Pi_{i-1}(A_{\bd})$.
}
\end{enumerate}
\end{definitions}


Note that if $A$ is a subanalytic $\lambda$-cell in $\RR^n$ with center $\theta=(\theta_i)_{i\in\im(\lambda)}$, where $n\in\im(\lambda)$, and if $u$ is a strong subanalytic unit on $A$ with center $\theta$, then $A$ is also a subanalytic cylinder in $\RR^n$ with center $\theta_n$ and $u$ is a strong subanalytic unit on $A$ with center $\theta_n$ (in the cylindrical senses of Definitions \ref{def:subPrep}).


\begin{lemma}\label{lem:asympt}
Let $A\subset(0,1)^n$ be an open cell $A\subset(0,1)^n$ which is prepared with center $0$, and let
$
J= \{i : \tld{x}_i \mbox{  is asymptotically undetermined on $A$  }\}.
$
Then for any nonzero $\gamma\in\QQ^n$ with support in $J$, the image of the function $x\in A\mapsto x^\gamma$ is not contained in a compact subset of $(0,+\infty)$.
\end{lemma}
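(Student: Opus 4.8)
The plan is to reduce everything to the behavior of $x^\gamma$ in the single highest coordinate occurring in $\gamma$, since the lower coordinates will contribute only a harmless positive factor. Because $A$ is an open cell in $(0,1)^n$ prepared with center $0$, we have $\tld{x}_i = x_i$ and $A_{\bd} = A$, so the functions $a_i,b_i$ of \eqref{eq:Abd} are exactly the functions cutting out the cell, namely $a_i(x_{<i}) < x_i < b_i(x_{<i})$ on $\Pi_i(A)$. I would let $k$ be the largest index with $\gamma_k \neq 0$, which exists as $\gamma \neq 0$. Since $\gamma$ has support in $J$, we have $k\in J$, i.e.\ $\tld{x}_k = x_k$ is asymptotically undetermined, so by definition $b_k/a_k$ is unbounded on $\Pi_{k-1}(A)$ (with the convention $b_k/0 = +\infty$). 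As $\gamma_i = 0$ for $i > k$, the function $x^\gamma = \prod_{i\le k} x_i^{\gamma_i}$ depends only on $x_{\le k}$ and is defined on the cell $\Pi_k(A)$; since $\Pi_k$ maps $A$ onto $\Pi_k(A)$, the image of $x^\gamma$ on $A$ equals its image on $\Pi_k(A)$.

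Next I would fix $x_{<k}\in\Pi_{k-1}(A)$ and set $v := \prod_{i<k} x_i^{\gamma_i} > 0$. As $x_k$ ranges over $(a_k(x_{<k}), b_k(x_{<k}))$, the value $v\,x_k^{\gamma_k}$ sweeps out an open subinterval of the image whose two endpoints have ratio $(b_k(x_{<k})/a_k(x_{<k}))^{|\gamma_k|}$, independent of $v$ and of the sign of $\gamma_k$. Given any compact $[c,C]\subset(0,+\infty)$, the unboundedness of $b_k/a_k$ lets me choose $x_{<k}$ with $b_k(x_{<k})/a_k(x_{<k}) > (C/c)^{1/|\gamma_k|}$; the corresponding interval then has endpoint ratio exceeding $C/c$ and so cannot lie in $[c,C]$. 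Hence the image meets the complement of $[c,C]$, and as $[c,C]$ was arbitrary the image is not contained in any compact subset of $(0,+\infty)$, which is the assertion.

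The only point needing care is the degenerate case $a_k(x_{<k}) = 0$ (the unconstrained case), where the "interval'' for fixed $x_{<k}$ is really a ray $(0,v\,b_k^{\gamma_k})$ or $(v\,b_k^{\gamma_k},+\infty)$ according to the sign of $\gamma_k$; either way it is unbounded in the relevant direction and immediately escapes any $[c,C]$, so the convention $b_k/0=+\infty$ makes the argument above go through uniformly. Beyond this, the substantive steps are the elementary endpoint-ratio computation and the observation that fixing $x_{<k}$ and letting $x_k$ vary genuinely traces a subinterval of the image, which follows from the surjectivity of the cell projection $\Pi_k:A\to\Pi_k(A)$. I do not expect any real obstacle here: the content of the lemma is precisely that "asymptotically undetermined'' forces arbitrarily wide multiplicative oscillation in the top coordinate, and the lower coordinates cannot compress it.
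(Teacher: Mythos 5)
Your proof is correct, and it takes a genuinely different route from the paper's. The paper argues by induction on $n$: it writes $A$ as a cylinder over $\Pi_{n-1}(A)$ with bounding functions $a$ and $b$ in prepared form $x_{<n}^{\alpha}u$ and $x_{<n}^{\beta}v$, uses Corollary \ref{cor:asympt}.1 to conclude $\alpha\neq\beta$ from asymptotic undeterminedness, and then pushes the problem down to $\Pi_{n-1}(A)$ by applying the induction hypothesis to one of the modified exponent tuples $\gamma_{<n}+\gamma_n\alpha$ or $\gamma_{<n}+\gamma_n\beta$ arising from the fiberwise bounding functions $h_1,h_2$. You instead avoid induction entirely: you isolate the top index $k$ of the support of $\gamma$, invoke only the raw definition of ``asymptotically undetermined'' (unboundedness of $b_k/a_k$, with the unconstrained case $a_k=0$ handled separately), and observe that each fiber over $x_{<k}$ contributes a multiplicative interval of endpoint ratio $(b_k/a_k)^{|\gamma_k|}$, which cannot all fit inside a fixed $[c,C]$. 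The key facts you rely on --- that $A_{\bd}=A$ and $\tld{x}_i=x_i$ for an open cell in $(0,1)^n$ with center $0$, that $\Pi_k$ maps $A$ onto the cell $\Pi_k(A)$ so the image of $x^\gamma$ is computed there, and that a prepared cell has $a_k$ either identically zero or everywhere positive --- all hold. Your argument is shorter and more elementary in that it does not need the prepared monomial forms of $a_k,b_k$ or Corollary \ref{cor:asympt}; the paper's inductive version is more in the spirit of the cell-by-cell induction used elsewhere in the paper but is not needed for this statement.
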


\begin{proof}
The proof is by induction on $n$.  For $n=1$ this is clear. Suppose the statement is known for $n-1$.  Write
$$
A=\{x : x_{<n}\in\Pi_{n-1}(A),\  a(x_{<n}) <   x_n < b(x_{<n}) \},
$$
where either $a = 0$ or $a(x_{<n}) = x_{<n}^{\alpha} u(x)$, and $b(x_{<n}) = x_{<n}^{\beta}v(x)$, for some
$\alpha = (\alpha_1,\ldots,\alpha_{n-1})$ and $\beta = (\beta_1,\ldots,\beta_{n-1})$ in $\QQ^{n-1}$ with support in $J\cap\{1,\ldots,n-1\}$.
We may suppose that $n\in J$ and that $\gamma_n\not = 0$, since otherwise we are done.
If $a = 0$, then by fixing $x_{<n}\in\Pi_{n-1}(A)$ and letting $x_n\to 0$, $x^\gamma$ tends to either $0$ or $+\infty$, and we are done.  So assume that $a > 0$.  Since $n\in J$, it follows that $\alpha\not =  \beta$.  Define, on $\Pi_{n-1}(A)$, the functions
$$
h_1(x_{<n }):=x_{<n }^{\gamma_{<n}}a(x_{<n })^{\gamma_n} \ \mbox{ and } \  h_2(x_{<n }):=x_{<n }^{\gamma_{<n}}b(x_{<n })^{\gamma_n}.
$$
Clearly $g_\gamma$ takes all values between $h_1(x_{<n })$ and $h_2(x_{<n })$ for any $x_{<n }\in \Pi_{n-1}(A)$. Since $\alpha\not =  \beta$,
at least one of the tuples $\gamma_{<n}+\gamma_n\alpha$ or $\gamma_{<n}+\gamma_n\beta$ is nonzero.
Hence, by induction, at least one of the $h_i$ has an image which is not contained in a compact subset of $(0,+\infty)$, and so does $g_\gamma$.
\end{proof}

The following are immediate consequences of Lemma \ref{lem:asympt}.

\begin{corollaries}\label{cor:asympt}
Using the notation from Definitions \ref{def:Abd}, suppose that $A$ is prepared with center $\theta$, and let $J = \{i \in\im(\lambda): \text{$\tld{x}_i$ is asymptotically undetermined on $A$}\}$.  For the functions $a_i$ and $b_i$ given in \eqref{eq:Abd}, write $a_i(y_{<i}) = y_{<i}^{\alpha_i} u_i(y_{\leq i})$ provided that $a_i > 0$ on $\Pi_{i-1}(A_{\bd})$, and write $b_i(y_{\leq i}) = y_{<i}^{\beta_i}v_i(y_{\leq i})$, where $\alpha_i,\beta_i\in\QQ^{i-1}$ have support in $\{1,\ldots,i-1\}\cap\lambda^{-1}(J)$, and $u_i$ and $v_i$ are strong subanalytic units on $\Pi_i(A_{\bd})$ with center $0$.
\begin{enumerate}{\setlength{\itemsep}{3pt}
\item
For each $i\in\{1,\ldots,d\}$, $\tld{x}_{\lambda(i)}$ is asymptotically determined on $A$ if and only if $a_i > 0$ and $\alpha_i = \beta_i$, if and only if $a_i/b_i$ is a strong subanalytic unit.

\item
The prepared form of a subanalytic function $f:A\to\RR$ is unique in the following sense: if $f(x) = |\tld{x}|^{\alpha} u(x) = |\tld{x}|^{\beta} v(x)$ on $A$, where $\alpha,\beta\in\QQ^n$ have support in $J$ and $u$ and $v$ are strong subanalytic units on $A$ with center $\theta$, then $\alpha = \beta$.
}
\end{enumerate}
\end{corollaries}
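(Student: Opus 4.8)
The plan is to reduce both statements to monomial estimates on the open cell $A_{\bd}\subset(0,1)^d$ and to feed these into Lemma \ref{lem:asympt}. The one preliminary fact I would isolate is that a strong subanalytic unit, in the sense of Definitions \ref{def:Abd}, is bounded between two positive constants in absolute value: writing $u = U\circ\varphi$ with $\varphi$ bounded and $U$ an analytic unit on the compact set $\cl(\im(\varphi))$, continuity and nonvanishing of $U$ give the bound. Consequently a quotient of two strong subanalytic units of the same sign is again a strong subanalytic unit, hence itself bounded between positive constants. Since $A_{\bd}$ is prepared with center $0$ and its asymptotically undetermined coordinates are exactly $\lambda^{-1}(J)$, Lemma \ref{lem:asympt} applies verbatim to $A_{\bd}$ and to each projection $\Pi_{i-1}(A_{\bd})$.

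For part (2), suppose $f(x) = |\tld{x}|^\alpha u(x) = |\tld{x}|^\beta v(x)$ on $A$ with $\alpha,\beta$ supported in $J$. Dividing gives $|\tld{x}|^{\alpha-\beta} = v(x)/u(x)$ on $A$, and the right-hand side is a strong subanalytic unit (the two units share the sign of $f$), hence bounded between positive constants; thus the image of $x\mapsto|\tld{x}|^{\alpha-\beta}$ lies in a compact subset of $(0,+\infty)$. I would then push this forward through $y = F^{[\theta]}_A(x)$: since $|\tld{x}_{\lambda(j)}| = y_j^{\zeta_{\lambda(j)}}$, the monomial $|\tld{x}|^{\alpha-\beta}$ equals $y^\delta$ with $\delta_j = \zeta_{\lambda(j)}(\alpha-\beta)_{\lambda(j)}$, a tuple supported in $\lambda^{-1}(J)$. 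Now $y^\delta$ has relatively compact image in $(0,+\infty)$, so Lemma \ref{lem:asympt} forces $\delta = 0$; as each $\zeta_{\lambda(j)}\neq 0$ this gives $\alpha = \beta$.

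For part (1), I would prove the three-way equivalence among: $\tld{x}_{\lambda(i)}$ is asymptotically determined; $a_i > 0$ and $\alpha_i = \beta_i$; and $a_i/b_i$ is a strong subanalytic unit. If $a_i = 0$ then $b_i < C a_i$ is impossible, so determinacy forces $a_i > 0$; assuming $a_i > 0$ and writing $a_i = y_{<i}^{\alpha_i} u_i$, $b_i = y_{<i}^{\beta_i} v_i$ from \eqref{eq:Abd}, we get $b_i/a_i = y_{<i}^{\beta_i-\alpha_i}(v_i/u_i)$ with $v_i/u_i$ bounded between positive constants. Asymptotic determinacy is exactly an upper bound on $b_i/a_i$, equivalently on $y_{<i}^{\beta_i-\alpha_i}$, and if $\alpha_i=\beta_i$ this holds while $a_i/b_i = u_i/v_i$ is visibly a strong subanalytic unit; the reverse implications from the unit condition are then immediate. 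The remaining implication, that determinacy forces $\alpha_i = \beta_i$, is where I expect the one genuine subtlety: Lemma \ref{lem:asympt} only says a nonzero monomial has image \emph{not} contained in a compact subset of $(0,+\infty)$, which a priori permits escape to $0$ rather than to $+\infty$, so an upper bound alone would not immediately contradict $\beta_i-\alpha_i\neq 0$. The resolution is to exploit the cell inequality $a_i < b_i$, which gives $b_i/a_i > 1$ and hence a positive lower bound on $y_{<i}^{\beta_i-\alpha_i}$; combined with the upper bound from determinacy this confines $y_{<i}^{\beta_i-\alpha_i}$ to a compact subset of $(0,+\infty)$, so applying Lemma \ref{lem:asympt} on $\Pi_{i-1}(A_{\bd})$ (whose undetermined coordinates are $\{1,\ldots,i-1\}\cap\lambda^{-1}(J)$, the support of $\beta_i-\alpha_i$) forces $\beta_i-\alpha_i = 0$. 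I anticipate this lower-bound maneuver, rather than any computation, to be the crux of the argument.
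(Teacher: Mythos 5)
Your proposal is correct and follows exactly the route the paper intends: the paper states these corollaries as immediate consequences of Lemma \ref{lem:asympt}, and your argument is precisely that derivation, transporting the monomial $|\tld{x}|^{\alpha-\beta}$ (resp.\ $b_i/a_i$) to $A_{\bd}$ (resp.\ $\Pi_{i-1}(A_{\bd})$) and using boundedness of quotients of strong subanalytic units. Your observation that the lower bound $b_i/a_i>1$ from the cell inequality is needed to pin the monomial inside a compact subset of $(0,+\infty)$ is exactly the right point of care.
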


\begin{definition}
Inductively define a {\bf subanalytic cell decomposition} of a subanalytic
set $X\subset\RR^n$ to be a finite partition $\A$ of $X$ by
subanalytic cells such that, when $n > 0$, $\{\Pi_{n-1}(A) :
A\in\A\}$ is a subanalytic cell decomposition of $\Pi_{n-1}(X)$.
\end{definition}

\begin{definition}
Let $\F$ be a finite collection of real-valued subanalytic functions on a subanalytic set $X$.  A {\bf cell preparation} of $\F$ is a finite subanalytic cell decomposition of $X$, say $\A$, such that for each $A\in\A$ there exists a center $\theta$ for $A$ such that $\F$ is prepared on $A$ with center $\theta$.  We call $\theta$ the center {\bf associated} with $A$ by the preparation.
\end{definition}

\begin{theorem}\label{thm:cellPrep}
Suppose that $\F$ is a finite collection of real-valued subanalytic
functions on a subanalytic set $X$ and that $\S$ is a finite set of
subanalytic subsets of $X$.  There exists a cell preparation of
$\F$ compatible with $\S$.
\end{theorem}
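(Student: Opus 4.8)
The plan is to induct on the ambient dimension $n$, reducing the $n$-variable cell preparation to an $(n-1)$-variable cell preparation of suitable base functions together with a single application of the cylindrical preparation theorem \ref{thm:subPrep} in the last variable $x_n$. The base case $n=0$ is immediate, since then $X$ is a point, $d=0$, and every function is $\emptyset$-prepared with center $\emptyset$.

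For the inductive step I would first apply Theorem \ref{thm:subPrep} to $\F$ and $\S$, treating $x_n$ as the special variable, to obtain a finite partition of $X$ into subanalytic cylinders compatible with $\S$. On each thin cylinder every $f\in\F$ equals a subanalytic term in $x_{<n}$ alone, so these cylinders are handled directly by the inductive hypothesis applied over the base. On each fat cylinder there is a common center $\theta_n(x_{<n})$ with $f(x)=a_f(x_{<n})\,|x_n-\theta_n(x_{<n})|^{r_f}\,u_f(x)$, and after applying Lemma \ref{lemma:strongUnit} I may assume each unit has the normal form $u_f=U_f\circ\varphi_f$, with $\varphi_f$ built from a bounded tuple of coefficient terms in $x_{<n}$ together with $|x_n-\theta_n|^{\pm 1/p}$.

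Next I would collect into one finite family all the functions of $x_{<n}$ produced above — the coefficients $a_f$, all coefficient terms occurring in the $\varphi_f$, the cylinder boundary terms $a,b$, and the center $\theta_n$ — and apply the inductive hypothesis to prepare this family on a subanalytic cell decomposition of $\Pi_{n-1}(X)$, with an associated center on each base cell. Restricting each fat cylinder over the cells of this base decomposition, and partitioning further so that $\tld{x}_n:=x_n-\theta_n$ has constant sign and avoids $\{-1,0,1\}$, yields subanalytic $\lambda$-cells $A\subset\RR^n$ whose center is the base center augmented by $\theta_n$ in the $x_n$-direction. Including $a,b$ in the prepared base family guarantees that the induced top boundary functions of $A_{\bd}$ are themselves prepared on $\Pi_{d-1}(A_{\bd})$, which is exactly what makes $A$ a prepared cell in the recursive sense of Definitions \ref{def:Abd}. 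Since each $A$ lies in a single step-one cylinder, compatibility with $\S$ is inherited, and the projections $\Pi_{n-1}(A)$ recover the base decomposition, so the resulting $\A$ is a subanalytic cell decomposition of $X$.

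On such a cell, substituting the prepared base forms into the expression for $f$ yields $f(x)=|\tld{x}_{<n}|^{\mu_f}\,|\tld{x}_n|^{r_f}$ times a product of the pulled-back base unit $\bar{u}_f$ and the cylindrical unit $u_f$. The crux is then twofold. First, I must verify that this product is a strong subanalytic unit on $A$ in the cell sense of Definitions \ref{def:Abd}: each component of $\varphi_f$ becomes a monomial $|\tld{x}|^\beta$ times a base unit, and after partitioning so that every such monomial is bounded I can merge the finitely many analytic units into a single analytic unit on the closure of the image of the combined monomial tuple. Second, the exponent $(\mu_f,r_f)$ must be supported on the asymptotically undetermined coordinates, as the definition of \emph{prepared} demands. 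Here I would invoke Corollaries \ref{cor:asympt}: if $\tld{x}_{\lambda(i)}$ is asymptotically determined, then $a_i$ and $b_i$ share the same monomial part, so $y_i/y_{<i}^{\alpha_i}$ is trapped between two positive constants and any power of $\tld{x}_{\lambda(i)}$ rewrites as a monomial in the lower variables times a bounded analytic-unit factor; applying this from the top coordinate downward clears all determined exponents into the units and the lower monomials. I expect this absorption step, together with the bookkeeping needed to keep all $\varphi$-monomials bounded while merging analytic units, to be the principal technical difficulty; the passage from cylindrical to cell preparation is otherwise a routine induction once these normalizations are in place.
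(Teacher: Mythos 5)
Your overall strategy --- induction on $n$, cylindrical preparation in $x_n$ via Theorem \ref{thm:subPrep}, inductive cell preparation of the base data, and absorption of the exponents of asymptotically determined coordinates into the units via Corollaries \ref{cor:asympt} --- is the same as the paper's, and the absorption step you single out is indeed one of the two corrections the paper makes at the end of its proof. But you have missed the other, more delicate correction. Being \emph{prepared} in the sense of Definitions \ref{def:Abd} requires not only that the exponent tuple be supported on the asymptotically undetermined coordinates, but also that the closure of the image of the lower boundary function $a_d$ of $A_{\bd}$ contain $0$. After your construction it can perfectly well happen that both $a$ and $b$ are strong subanalytic units on the base, i.e.\ bounded away from $0$; then the cell is simply not prepared, and no amount of rewriting the monomial part of $f$ fixes this. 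The paper's remedy is to replace the center $\theta_n$ by $\theta_n + a$, so that the new fiber coordinate $x_n-(\theta_n+a)(x_{<n})$ ranges over $(0,(b-a)(x_{<n}))$. This is not free: one must check that a strong subanalytic unit $u$ with the old center remains one with the new center. The paper does this by observing that on such a cell $\tld{x}_n$ ranges over a compact subset of $(0,+\infty)$, so $u$ can be rewritten as $U(|\tld{x}_{<n}|^{\gamma_1},\ldots,|\tld{x}_{<n}|^{\gamma_{\ell-1}},\tld{x}_n)$ with $U$ analytic on a compact set, after which one substitutes $\tld{x}_n = (x_n-(\theta_n+a)(x_{<n}))+a(x_{<n})$ and uses that $a$ is itself a unit on the base. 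This recentering is the second genuine technical point of the proof (and is reused later, e.g.\ in Lemma \ref{lem:goodCoord}), so it cannot be dismissed as part of a routine induction.

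A smaller omission: your list of base functions to be prepared inductively contains $a$ and $b$ but not $b-a$. Definitions \ref{def:Abd}(3) requires the triple $\{a_d,b_d,b_d-a_d\}$ to be prepared on $\Pi_{d-1}(A_{\bd})$, and the difference of two prepared functions is in general not prepared, so $b-a$ must be added to the family before invoking the inductive hypothesis, exactly as the paper does. With these two additions your argument matches the paper's proof.
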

\begin{proof}
Apply Theorem \ref{thm:subPrep} to $\F$ and $\S$, and let $\A$ be
the finite partition of $X$ into subanalytic cylinders, compatible
with $\S$, which is given by the preparation. The proof now proceeds
by induction on $n$, where $X\subset\RR^n$.
 The case that $n=0$, with $\RR^{0}=\{0\}$, being trivial, suppose that $n \geq 1$.  By further partitioning the cylinders in
$\A$, we may assume that for each fat cylinder $A\in\A$, with
associated center $\theta_n:\Pi_{n-1}(A)\to\RR$,
\begin{equation}\label{eq:Acylinder}
A = \{x : x_{<n}\in\Pi_{n-1}(A), a(x_{<n}) < \varepsilon_i \tld{x}_{n}^{\zeta_n} < b(x_{<n})\}
\end{equation}
for some $\varepsilon_n,\zeta_n\in\{-1,1\}$ and subanalytic functions $a,b:\Pi_{n-1}(A)\to[0,1]$ such that $a < b$ and either $a = 0$ or $a>0$, where $\tld{x}_n = x_n - \theta_n(x_{<n})$.
%
%
%
%
By partitioning the cylinders in $\A$ even further, we may also
assume that $\A$ is a cylindrical decomposition of $X$ over
$\Pi_{n-1}(X)$, namely, that there exists a finite partition $\B$ of
$\Pi_{n-1}(X)$ into subanalytic sets such that
$\A=\bigcup_{B\in\B}\A_B$, where $\A_B$ is a set of disjoint
cylinders with base $B$.  Fix $B\in\B$. For each fat cylinder $A\in
\A_B$ there exists a center $\theta_n:B\to\RR$ such that each
$f\in\F$ can be written in the form
\begin{equation}\label{eq:prepf}
f(x) = c(x_{<n}) |\tld{x}_n|^r
u(a_1(x_{<n})|\tld{x}_n|^{r_1},\ldots,a_N(x_{<n})|\tld{x}_n|^{r_n})
\end{equation}
on $A$.  Now apply the
induction hypothesis to $\B$ and the following set of functions on each $B\in \B$ (extended by zero outside $B$):
\begin{itemize}
\item
For each $f\in\F$ and each fat cylinder $A\in\A_B$, the functions
$c, a_1,\ldots,a_N$ from \eqref{eq:prepf}.


\item
For each fat cylinder $A\in\A_B$, the functions $a$, $b$, and $b-a$ from \eqref{eq:Acylinder}.

\item
For each thin cylinder $A\in\A_B$, say of the form $A = \{x\in
B\times\RR : x_n = d(x_{<n})\}$, and each $f\in\F$, the function
$f\circ d$.
\end{itemize}
This gives a decomposition of $X$ into subanalytic cells which associates to each cell $A$ in the decomposition a center $\theta = (\theta_1,\ldots,\theta_n)$ such that $\Pi_{n-1}(A)$ is prepared with center $\theta_{<n}$, and each $f\in\F$ is either identically zero on $A$ or can be written in the form
\[
f(x) = |\tld{x}|^\alpha u(x)
\]
on $A$ for a strong subanalytic unit $u(x)$ with center $\theta$, where for each $i < n$, $\alpha_i = 0$ if $\tld{x}_i$ is asymptotically determined on $A$.  If $A$ is thin in $x_n$, then $\F$ is prepared on $A$ with center $\theta$.  If $A$ is fat in $x_n$ and
$a = 0$,
then we are also done on $A$. So let $A$ be
fat in $x_n$ with $a > 0$.
There are two possible reasons why $\F$ might not be prepared on $A$ with center $\theta$:
\begin{enumerate}{\setlength{\itemsep}{3pt}
\item
It might be that $\alpha_n \neq 0$ but $\tld{x}_n$ is asymptotically determined on $A$, that is,
the image of $a/b$ is contained in a compact subset of $(0,+\infty)$.

\item
It might be that
$a$ and $b$ are both strong subanalytic units on $\Pi_{n-1}(A)$ with center $\theta_{<n}$,
which violates the requirement that $0$ be in the closure of the image of $a$.
}
\end{enumerate}

Suppose that we are in case 1. Then
\[
f(x) = |\tld{x}_{<n}|^{\alpha_{<n}}|a(\tld{x}_{<n})|^{\alpha_n}\cdot\left[
\frac{ | \tld{x}_n|^{\alpha_n}}{|a(\tld{x}_{<n})|^{\alpha_n}} u(x)\right].
\]
The expression in square brackets is a strong subanalytic unit, and we are done.

Finally we treat case 2, where we suppose that $\varepsilon_n = \zeta_n = 1$, the other cases being similar. Since clearly $\tld{x}_n$ is asymptotically determined
on $A$, our treatment of case 1 shows that we can assume that $\alpha_n=0$. We will change the center so that any strong subanalytic unit $u$ with the old center remains a strong subanalytic unit with the new center.
Write
\[
u(x) = U(|\tld{x}|^{\gamma_1},\ldots,|\tld{x}|^{\gamma_k})
\]
for some natural number $k$, tuples $\gamma_1,\ldots,\gamma_k\in\QQ^n$,
and a function $U$ which is analytic on the closure of the image
of $x\in A\mapsto(|\tld{x}|^{\gamma_1},\ldots,|\tld{x}|^{\gamma_k})$,
which is bounded in $\RR^k$. For any $i$,
the closure of the image of the map $x\in A\mapsto
(|\tld{x}_{<n}|^{\gamma_{<n}(i)}, \tld{x}_n)$ is a
compact subset of $\RR\times(0,+\infty)$, and the map $(s,t)\mapsto s\cdot
t^{\gamma_n(i)}$ is analytic on this set. So we may assume that $u(x)$ is of the
form
\[
u(x) =
U(|\tld{x}_{<n}|^{\gamma_1},\ldots,|\tld{x}_{<n}|^{\gamma_{\ell-1}},\tld{x}_n)
\]
for an analytic function $U(t_1,\ldots,t_{\ell})$ and
$\gamma_1,\ldots,\gamma_{\ell-1}\in\QQ^{n-1}$, for some natural number
$\ell$.  But then we may write $A$ as
\[
A = \{x\in\Pi_{n-1}(A)\times\RR : 0 < x_n - (\theta_n + a)(x_{<n}) <
(b-a)(x_{<n})\}
\]
and write $u$ as
\[
u(x) = U(\tld{x}_{<n}^{\gamma_1},\ldots,\tld{x}_{<n}^{\gamma_{\ell-1}},(x_n
- (\theta_n+a)(x_{<n})) + a(x_{<n})).
\]
The function $b-a$ is prepared on $\Pi_{n-1}(A)$ with center $\theta_{<n}$, so $A$ is prepared with center $(\theta_{<n},\theta_n+a)$.  Also, the function $a$ is a strong subanalytic unit on $\Pi_{n-1}(A)$ with center $\theta_{<n}$, so $u(x)$ is a strong subanalytic unit on $A$ with center $(\theta_{<n},\theta_n+a)$.  Thus $\F$ is prepared on $A$ with center $(\theta_{<n},\theta_n+a)$.
\end{proof}

In order to apply the cell preparation theorem multiple times we
will need the following variant of Theorem \ref{thm:cellPrep}.

\begin{theorem}\label{thm:iteratedCellPrep}
Suppose that $\F$ is a finite set of real-valued subanalytic
functions on a subanalytic set $X$, that $\S$ is a subanalytic cell
decomposition of $X$, and that for each $S\in\S$ we have an
associated center $\theta_S$ for $S$.
There exists a cell preparation of $\F$ compatible with $\S$
such that for each cell $A$ in this preparation, say a
$\lambda$-cell with center $\theta =
(\theta_{\lambda(1)},\ldots,\theta_{\lambda(d)})$, if $S$ is the
unique cell in $\S$ containing $A$,
then for each $i\in\im(\lambda_S)$
exactly one of the following holds:
\begin{renumerate}{\setlength{\itemsep}{3pt}
\item
The cell $A$ is thin in $x_i$, and
\[
x_i - \theta_{S,i}(x_1,\ldots,x_{i-1}) = a(x_1,\ldots,x_{i-1})
\]
on $A$ for a subanalytic term $a(x_1,\ldots,x_{i-1})$ which is
prepared on $\Pi_{i-1}(A)$ with center
$(\theta_j)_{j\in\im(\lambda),j<i}$.

\item
The cell $A$ is fat in $x_i$, and
\[
x_i - \theta_{S,i}(x_1,\ldots,x_{i-1}) = a(x_1,\ldots,x_{i-1})
u(x_1,\ldots,x_i)
\]
on $A$ for a subanalytic term $a(x_1,\ldots,x_{i-1})$ which is
prepared on $\Pi_{i-1}(A)$ with center
$(\theta_j)_{j\in\im(\lambda),j<i}$ and a strong subanalytic unit
$u(x_1,\ldots,x_i)$ on $\Pi_i(A)$ with center
$(\theta_j)_{j\in\im(\lambda),j\leq i}$.

\item
The cell $A$ is fat in $x_i$,
$\theta_i = \theta_{S,i}$, and
$x_i - \theta_i(x_1,\ldots,x_{i-1})$
is asymptotically undetermined on $A$.
}
\end{renumerate}
\end{theorem}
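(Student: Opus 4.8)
The plan is to prove this by induction on $n$, where $X\subset\RR^n$, closely following the structure of the proof of Theorem \ref{thm:cellPrep} but carrying along the extra data of the given centers $\theta_S$ and using Lemma \ref{lemma:simulPrep} as the essential new tool to compare the center $\theta$ produced by the preparation with the prescribed center $\theta_S$. As in Theorem \ref{thm:cellPrep}, the work at level $n$ is to prepare $\F$ in the single variable $x_n$ over the base $\Pi_{n-1}(X)$, to record the resulting coefficient functions of $x_{<n}$, and then to recurse on $\Pi_{n-1}(X)$. The trichotomy (i)--(iii) is produced coordinate by coordinate, the top coordinate $x_n$ being handled at the current level and the coordinates $x_i$ with $i<n$ being handled by the inductive call.

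For the top coordinate, first note that the only indices under discussion are $i\in\im(\lambda_S)$, since when $S$ is thin in $x_i$ every subcell $A\subset S$ is thin in $x_i$ and $x_i-\theta_{S,i}$ vanishes on $A$. Suppose then that $n\in\im(\lambda_S)$, and apply Theorem \ref{thm:subPrep} to $\F$ as functions of $x_n$ over the base, compatibly with $\S$, obtaining on each fat cylinder $A'\subset S$ a center $\theta_n$ together with prepared forms for the members of $\F$. If $A'$ is thin in $x_n$, then $x_n$ equals a subanalytic term $c(x_{<n})$ on $A'$ and I would put $a:=c-\theta_{S,n}$, landing in case (i). If $A'$ is fat in $x_n$, I would distinguish two cases according to whether $\tld{x}_n=x_n-\theta_{S,n}$, measured against the given center $\theta_{S,n}$, is asymptotically determined on $A'$ in the sense of Definitions \ref{def:Abd}. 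When it is asymptotically undetermined, the preparation of $\F$ requires no change of center (the obstruction in case 2 of the proof of Theorem \ref{thm:cellPrep} arises only in the asymptotically determined, ``compact subinterval'' situation), so I keep $\theta_n=\theta_{S,n}$ and land in case (iii). When it is asymptotically determined, $x_n-\theta_{S,n}$ is comparable to a fixed subanalytic term of $x_{<n}$; after recentering as needed to prepare $\F$, I would apply Lemma \ref{lemma:simulPrep} with $\theta_1=\theta_n$ and $\theta_2=\theta_{S,n}$ to write
\[
x_n-\theta_{S,n}=a(x_{<n})\,u(x_{\leq n}),
\]
where $u$ is a strong subanalytic unit with the new center $\theta_n$ (this is the content of cases (ii) and (iii) of that lemma, in which the unit is always built with center $\theta_1$), using Corollaries \ref{cor:asympt} to absorb a factor of $\tld{x}_n$ into the coefficient when Lemma \ref{lemma:simulPrep}(iii) occurs; this is case (ii).

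It remains to prepare the coefficients of $x_{<n}$ that have accumulated. I would collect into a single finite family $\F'$ on $\Pi_{n-1}(X)$ all the coefficient functions arising above---the coefficients $a$ and the base functions of the units from the prepared forms, the boundary functions $a$, $b$, $b-a$ of the cylinders, and the center differences $\theta_n-\theta_{S,n}$---and apply the induction hypothesis to $\F'$, to the induced decomposition $\Pi_{n-1}(\S)=\{\Pi_{n-1}(S):S\in\S\}$, and to the restricted centers $\theta_{S,<n}=(\theta_{S,j})_{j\in\im(\lambda_S),\,j<n}$. This simultaneously prepares every coefficient $a$ on $\Pi_{i-1}(A)$ with the required center $(\theta_j)_{j\in\im(\lambda),\,j<i}$ and furnishes the trichotomy for the indices $i<n$. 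Splicing the resulting cell decomposition of $\Pi_{n-1}(X)$ together with the $x_n$-preparation yields the desired cell preparation of $X$.

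The main obstacle will be the bookkeeping of centers in the fat case, specifically excluding Lemma \ref{lemma:simulPrep}(i), in which the produced unit carries the wrong center $\theta_{S,n}$ rather than $\theta_n$. This case is exactly the regime where $x_n$ lies nearer to $\theta_{S,n}$ than to $\theta_n$, so there one should simply not recenter---retaining $\theta_n=\theta_{S,n}$ and reclassifying the piece according to asymptotic determinacy against $\theta_{S,n}$. Making this consistent requires verifying that whenever recentering is genuinely forced (so that $\theta_n\neq\theta_{S,n}$), $x_n-\theta_{S,n}$ is asymptotically determined and Lemma \ref{lemma:simulPrep} lands in case (ii) or (iii); and, conversely, that asymptotic undeterminedness of $x_n-\theta_{S,n}$ against $\theta_{S,n}$ really does allow $\F$ to be prepared without abandoning that center, so that case (iii) is attainable with $\theta_n=\theta_{S,n}$. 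Establishing this clean equivalence between ``recentering is needed'' and ``asymptotically determined,'' and checking that it survives the descent to $\Pi_{n-1}$, is where the care lies.
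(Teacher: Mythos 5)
Your proposal follows essentially the same route as the paper: induction on $n$ mirroring the proof of Theorem \ref{thm:cellPrep}, with Lemma \ref{lemma:simulPrep} used to reconcile the center handed to you by the cylindrical preparation with the prescribed center $\theta_{S,n}$, and with the center differences and coefficient functions added to the family passed to the inductive call on $\Pi_{n-1}(X)$. The one organizational difference is that the paper applies Lemma \ref{lemma:simulPrep} with $\theta_1=\theta_{S,n}$, so that in the lemma's cases (ii) and (iii) the unit is centered at $\theta_{S,n}$ and the whole prepared form of $\F$ is simply rewritten with that center (only the lemma's case (i) retains the preparation's own center, landing directly in the theorem's case (ii)); this orientation dissolves the ``wrong center'' obstacle you flag and removes the need to pre-classify pieces by asymptotic determinacy of $x_n-\theta_{S,n}$.
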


\begin{proof}
The proof of Theorem \ref{thm:iteratedCellPrep} is a simple
modification of the proof of Theorem \ref{thm:cellPrep} using Lemma
\ref{lemma:simulPrep}.  Indeed, for the cases  $n\geq 1$,
begin as before by applying Theorem \ref{thm:subPrep} to get a
cylindrical preparation of $\F$ compatible with $\S$.  Call it $\A$
as before.
Consider a fat cylinder $A\in\A$, and fix the unique $S\in\S$ such that $A\subset S$. Let $\theta_{A,n}$ be the center for $A$ given by the cylindrical preparation $\A$.  Thus each $f\in\F$ can be written in the form
\begin{equation}\label{eq:fPrepOnA}
f(x) = a(x_{<n})|x_n - \theta_{A,n}(x_{<n})|^r u(x_{<n},x_n - \theta_{A,n}(x_{<n}))
\end{equation}
on $A$, where $u(x_{<n},x_n)$ is a strong subanalytic unit with center $0$ on \[
\{(x_{<n},x_n-\theta_{A,n}(x_{<n})) : x\in A\}.
\]
By applying Lemma \ref{lemma:simulPrep} to $\theta_{S,n}$ and $\theta_{A,n}$ on $A$, we get a partition $\A'_A$ of $A$ by subanalytic cylinders such that for each fat cylinder $A'\in\A'_A$, either $\theta_{A,n} = \theta_{S,n}$ on $A'$, or at least one of the following holds on $A'$,
\begin{enumerate}
\item
$\displaystyle x_n - \theta_{S,n}(x_{<n}) = (\theta_{A,n}(x_{<n}) -
\theta_{S,n}(x_{<n})) \cdot \left[1 + \frac{x_n -
\theta_{A,n}(x_{<n})}{\theta_{A,n}(x_{<n}) - \theta_{S,n}(x_{<n})} \right]$,

\item
$\displaystyle x_n - \theta_{A,n}(x_{<n}) = (\theta_{S,n}(x_{<n}) -
\theta_{A,n}(x_{<n})) \cdot \left[1 + \frac{x_n -
\theta_{S,n}(x_{<n})}{\theta_{S,n}(x_{<n}) - \theta_{A,n}(x_{<n})} \right]$,


\item
$\displaystyle x_n - \theta_{A,n}(x_{<n}) = (x_n - \theta_{S,n}(x_{<n}))
\cdot \left[1 + \frac{\theta_{S,n}(x_{<n}) - \theta_{A,n}(x_{<n})}{x_n -
\theta_{S,n}(x_{<n})} \right]$,
\end{enumerate}
where the expressions in square brackets are strong subanalytic units.  We now assign a center $\theta_n$ to $A'$ as follows: In case 1 we let $\theta_n = \theta_{A,n}$.  In cases 2 and 3 we let $\theta_n = \theta_{S,n}$, and for each $f\in\F$ we use the equation from either 2 or 3 to express each instance of $x_n - \theta_{A,n}(x_{<n})$ in \eqref{eq:fPrepOnA} in terms of $x_n - \theta_n(x_{<n})$ so that $f$ is prepared on $A'$ with center $\theta_n$.

Doing this for each fat cell $A$ in $\A$ gives a cylindrical preparation $\A' = \bigcup_{A\in\A}\A'_A$ of $\F$ on $X$.  We now proceed as before in the proof of Theorem \ref{thm:cellPrep}, except that we use $\A'$ in place of $\A$, we include the functions $\theta_{A,n} - \theta_{S,n}$ (extended by zero), and on thin cylinders $A$ the functions $x_n- \theta_{S,n}$ (extended by zero), in the set of functions to which the induction hypothesis is applied, and we strengthen our induction hypothesis to  assume that cases (i), (ii), or (iii) of the Theorem hold in the variables $x_1,\ldots,x_{n-1}$.
\end{proof}


\begin{theorem}[Cell preparation of constructible functions]\label{thm:constrPrep}
Suppose that $\F$ is a finite set of functions in $\C(X)$ for a subanalytic set $X\subset\RR^n$, and that $\S$ is a finite set of subanalytic subsets of $X$.  There exists a finite subanalytic cell decomposition of $X$, compatible with $\S$, such that for each cell $A$ in this decomposition, there exists a center $\theta$ such that  $A$ is prepared with center $\theta$ and the following hold:

Suppose that $A$ is a $d$-dimensional $\lambda$-cell, let $\tld{x} =
(\tld{x}_{\lambda(1)},\ldots,\tld{x}_{\lambda(d)})$ be the
coordinates for $A$ with center $\theta$, and let
\[
J = \{j\in\im(\lambda) : \text{$\tld{x}_j$ is
asymptotically undetermined on $A$}\}.
\]
Each $f\in\F$ can be written as a finite sum
\begin{equation}\label{eq:sumT}
f(x) = \sum_{i\in I} T_i(x)
\end{equation}
on $A$, where each term $T_i(x)$ is of the form
\begin{equation}\label{eq:formT}
u_i(\tld{x})
\left(\prod_{j\in J} |\tld{x}_j|^{\alpha_j(i)}
    (\log|\tld{x}_j|)^{\ell_j(i)}\right)
\end{equation}
for rational numbers $\alpha_j(i)$, integers $\ell_j(i)\geq 0$,
and strong subanalytic units $u_i$ with center $\theta$.  Moreover, the tuples $(\ell_j(i))_{j\in J}$ are distinct for different $i$ in $I$.
\end{theorem}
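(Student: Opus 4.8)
The plan is to reduce $f$ to an explicit combination of subanalytic functions and logarithms, prepare the subanalytic ingredients once, and then peel off the logarithms factor by factor. First I would use the observation from the introduction to write each $f\in\F$ as a finite sum $f=\sum_i f_i\prod_j\log f_{i,j}$ with $f_i$ subanalytic and $f_{i,j}$ positive subanalytic. Applying Theorem \ref{thm:cellPrep} to the finite family $\{f_i\}\cup\{f_{i,j}\}$ together with $\S$, I obtain a cell decomposition compatible with $\S$ on whose cells $A$ (with center $\theta$, coordinates $\tld x$, and asymptotically undetermined set $J$) one has $f_i=|\tld x|^{\gamma_i}v_i$ and $f_{i,j}=|\tld x|^{\delta_{i,j}}w_{i,j}$ for strong subanalytic units $v_i,w_{i,j}$ and exponents $\gamma_i,\delta_{i,j}\in\QQ^{\im(\lambda)}$ with support in $J$. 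Since $f_{i,j}>0$ we may take $w_{i,j}>0$, and then
\[
\log f_{i,j}=\sum_{k\in J}(\delta_{i,j})_k\log|\tld x_k|+\log w_{i,j}.
\]
Because $w_{i,j}=W\circ\varphi$ for a positive analytic unit $W$ and a bounded tuple $\varphi$ of monomials $|\tld x|^{\beta}$, the term $\log w_{i,j}=(\log W)\circ\varphi$ is again subanalytic ($\log W$ being analytic on the compact closure of the image of $\varphi$). Thus, apart from the subanalytic functions $\log w_{i,j}$, the only logarithms occurring in $f$ are the $\log|\tld x_k|$ with $k\in J$, since the monomial exponents are already supported in $J$.

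Next I would remove the remaining subanalytic factors. The functions $v_i$ and $\log w_{i,j}$ are subanalytic, so I apply Theorem \ref{thm:iteratedCellPrep} to them, taking $\S$ to be the decomposition just produced and $\theta_S$ its associated centers; this refines the cells while recording how the new coordinates relate to the old ones. On a refined cell each $\log w_{i,j}$ becomes a monomial times a strong unit (with no new logarithm created, as these are ordinary subanalytic functions), and each $v_i$, being bounded away from $0$ and $\infty$, remains a strong unit with vanishing exponent by Lemma \ref{lem:asympt}. The point of using the iterated version is case (iii): a direction of $J$ that stays asymptotically undetermined keeps its old center, so the corresponding $\log|\tld x_k|$ is unchanged; and for a direction that becomes asymptotically determined, cases (i) and (ii) express $x_k-\theta_{S,k}$ as a prepared monomial times a strong unit, which lets me rewrite $\log|\tld x_k|$ as a $\QQ$-linear combination of the new $\log|\tld x_{k'}|$ plus logarithms of units. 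Expanding the products $\prod_j\log f_{i,j}$ and multiplying by $f_i$, every resulting summand takes the form $|\tld x|^\alpha u\prod_{k\in J}(\log|\tld x_k|)^{\ell_k}$, where $u$ is a product of strong units (hence a strong unit), $\alpha$ has support in $J$, and $(\ell_k)$ records the logarithmic multiplicities; this is precisely the shape \eqref{eq:formT}.

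Finally I would enforce the distinctness of the logarithmic tuples. Grouping the summands by their tuple $(\ell_k)_{k\in J}$ leaves, for each tuple, a coefficient that is a finite sum of terms $|\tld x|^\alpha u$; this coefficient is subanalytic, so feeding all such coefficients to Theorem \ref{thm:iteratedCellPrep} forces each into a single monomial-times-unit form, after which distinct summands carry distinct tuples $(\ell_k)$ and \eqref{eq:sumT}--\eqref{eq:formT} hold. I expect the crux of the argument to lie here, because this re-preparation is not a one-shot procedure: preparing a logarithm of a unit, or a coefficient sum, may split the range of a previously undetermined direction and thereby turn it determined, which in turn produces fresh logarithms of units to be prepared. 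I would resolve this by an outer induction, on $n$ or equivalently on the number of asymptotically undetermined directions, using Theorem \ref{thm:iteratedCellPrep} at each stage to keep the data already prepared on lower-dimensional bases compatible and to guarantee (via case (iii)) that undetermined directions retain their centers; this keeps the finitely many logarithms $\log|\tld x_k|$ with $k\in J$ stable and forces the recursion to terminate.
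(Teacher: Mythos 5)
Your overall strategy is the same as the paper's: write $f=\sum_i f_i\prod_j\log f_{i,j}$, prepare the arguments of the logarithms with Theorem \ref{thm:cellPrep} so that each $\log f_{i,j}$ splits into a $\QQ$-linear combination of the $\log|\tld{x}_k|$, $k\in J$, plus a subanalytic function (the log of a strong unit), then re-prepare the resulting subanalytic coefficients with Theorem \ref{thm:iteratedCellPrep} and use its cases (i)--(iii) to rewrite any $\log|\tld{x}_k|$ whose direction has become asymptotically determined. You also correctly locate the crux: this re-preparation can change which directions are asymptotically undetermined and thereby spawn fresh logarithms, so one needs a well-founded measure to terminate the recursion.

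That is exactly where your argument has a gap. Your proposed measure --- induction ``on $n$ or equivalently on the number of asymptotically undetermined directions'' --- does not work, and the two are not equivalent. When an index $m$ falls into case (i) or (ii) of Theorem \ref{thm:iteratedCellPrep}, the term $\log|\tld{x}_m|$ is rewritten as $\log$ of a \emph{prepared term on the base} plus $\log$ of a unit, i.e.\ as a $\QQ$-linear combination of $\log|\tld{x}_{j}|$ for several indices $j<m$ that may not have carried any logarithm before. So the set of indices carrying logarithms can strictly grow in cardinality at each step (e.g.\ $\{3\}$ can become $\{1,2\}$), and neither $n$ nor the count of undetermined directions decreases. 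What does decrease is the \emph{lexicographic} position of the set $L$ of indices that actually carry a logarithm with positive exponent: if $m$ is the largest index of $L$ falling into case (i) or (ii), then after rewriting, the new set $L''$ agrees with $L$ above $m$ and has $\max(L''\cap\{1,\ldots,m\})<m$. The paper's proof makes this precise by introducing a lexicographic well-ordering on subsets of $\{1,\ldots,n\}$ and inducting on $L$ with respect to it; without this (or an equivalent well-founded measure), your recursion is not known to terminate. A secondary, minor omission: after the rewriting you must also re-establish the distinctness of the tuples $(\ell_j(i))_{j\in J}$, which the paper gets for free because at each stage the expression is put back into the normal form \eqref{eq:logPrep1} with subanalytic coefficients before re-preparing; your ``group by tuple and prepare the coefficient sums'' step is the right idea but is itself one more round of the same recursion and so again relies on the missing termination argument.
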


\begin{proof}
Each $f\in\F$ may be written in the form
\[
f(x) = \sum_{i} g_i(x) \prod_{k}\log g_{ik}(x),
\]
where the $g_i:X\to\RR$ and
$g_{ik}:X\to(0,+\infty)$ are subanalytic and $i$ and
$k$ run over finite index sets.  Apply Theorem \ref{thm:cellPrep} to
all the $g_{ik}$ for all $f\in\F$.
Fix a cell $A'$ from this preparation, with associated center $\theta'$.  Let $x'$ be the coordinates for $A'$ with center $\theta'$,
%
%
and let $J'$ be the set of indices $j$ in $\{1,\ldots,n\}$ for which
$x_j$ is asymptotically undetermined on $A'$.  Thus we may write each
$f\in\F$ in the form
\[
f(x) = \sum_{i} g_i(x)\prod_{k} \log\left( \left(\prod_{j\in
J'}|x'_j|^{\gamma_j(i,k)}\right) u_{i,k}(x')
\right)
\]
on $A'$ for rational numbers $\gamma_j(i,k)$ and strong subanalytic
units $u_{i,k}$.  By expanding the logarithmic expressions and by using the
facts that logarithms of strong subanalytic units are subanalytic and
that sums of subanalytic functions are subanalytic, we may write each $f\in\F$
in the form
\begin{equation}\label{eq:logPrep1}
f(x) = \sum_{i\in I_f} f_i(x)
\left(\prod_{j\in J'}(\log|x'_j|)^{\ell_{f,j}(i)}\right)
\end{equation}
on $A'$, where the $(\ell_{f,j}(i))_{j\in J'}$ are
tuples of natural numbers which are distinct for
different $i$ in the
finite index set $I_f$, the $f_i$ are subanalytic functions.

Consider the following ``lexicographical'' ordering of the power set
of $\{1,\ldots,n\}$:  for any subsets $M$ and $N$ of $\{1,\ldots,n\}$,
define $M < N$ if and only if there exists $m\in\{1,\ldots,n\}$ such that
$M\cap\{m+1,\ldots,n\} = N\cap\{m+1,\ldots,n\}$ and $\max(
M\cap\{1,\ldots,m\}) < \max(N\cap\{1,\ldots,m\})$, with the
understanding that $\max(\emptyset)$ is defined to be $0$.  Note
that the smallest member of this ordering is the empty set.  With
respect to this ordering, we induct on the set
\begin{equation}\label{eq:L'}
L' := \{j\in J' : \text{$\ell_{f,j}(i) > 0$ for some
$f\in\F$ and $i\in I_f$}\}.
\end{equation}

If $L'$ is empty, then each $f\in\F$ is subanalytic on $A'$, in
which case we are done by applying Theorem \ref{thm:cellPrep} to
$\F$ on $A'$.  So assume that $L'$ is nonempty. Apply
Theorem \ref{thm:iteratedCellPrep} to $\{f_i : f\in\F, i\in I_f\}$. Fix an open
cell $A''\subset A'$ from this preparation, with associated center
$\theta''$.  Let $x''$ be the coordinates for $A''$ with center $\theta''$, and let $J''$ be the set of indices $j$ in $\{1,\ldots,n\}$ for which $x''_j$ is
asymptotically undetermined on $A''$. Thus each $f\in\F$ can be written as
\begin{equation}\label{eq:logPrep2}
f(x) = \sum_{i\in I_f} u_{f,i}(x'') \left(\prod_{j\in J''}
|x''_j|^{\alpha_{f,j}(i)}\right)
\left(\prod_{j\in J'}(\log|x'_j|)^{\ell_{f,j}(i)}\right)
\end{equation}
on $A''$ for some rational numbers $\alpha_{f,j}(i)$ and strong
subanalytic units $u_{f,i}$, and each $x'_j$  can be expressed in terms of $x''$ in the form
(i), (ii), or (iii) from Theorem \ref{thm:iteratedCellPrep}.
We are done if case (iii) holds for every $j$ in $L'$, so assume otherwise.  Let $m$ be the maximum $j$ in $L'$ such that $x'_j$ is of the form (i) or (ii).  By expanding each of the logarithmic terms $(\log (x'_m))^{\ell_{f,m}(i)}$ in \eqref{eq:logPrep2}, where $x'_m$ is expressed in terms of $x''_{\leq m}$ in accordance with the form (i) or (ii), we may write each $f\in\F$ in the form \eqref{eq:logPrep1} on $A''$ but with a new set \eqref{eq:L'}, which we call $L''$, which is lexicographically less than $L'$, since $L''\cap\{m+1,\ldots,n\} = L'\cap\{m+1,\ldots,n\}$ and $\max(L''\cap\{1,\ldots,m\}) < m = \max(L'\cap\{1,\ldots,m\})$.  We are done by the induction hypothesis.
\end{proof}

\begin{remark}\label{rmk:unbddFiberCell}
Remark \ref{rmk:unbddFiberCylinder} and the proof of Theorem \ref{thm:cellPrep} imply that the cell preparation constructed by Theorem \ref{thm:cellPrep} has the following property:
\begin{quote}
For every cell $A$ in the preparation with associated center $\theta = (\theta_1,\ldots,\theta_n)$, and every $i\in\{1,\ldots,n\}$, if each of the fibers of $\Pi_i(A)$ over $\Pi_{i-1}(A)$ are unbounded, then $\theta_i = 0$.
\end{quote}
Similarly, if in Theorem \ref{thm:iteratedCellPrep} we assume that the given cell decomposition $\S$, with associated centers $\theta_S$, has this property, then the cell preparation constructed by Theorem \ref{thm:iteratedCellPrep} has this property.  Therefore the cell preparation constructed in Theorem \ref{thm:constrPrep} also has this property, because its proof applies Theorems \ref{thm:cellPrep} and Theorem \ref{thm:iteratedCellPrep} in succession.
\end{remark}

%

\section{Sliver functions}\label{s:sliver}

Slivers will be used to study relative asymptotic classes of basic terms in the proof of Proposition \ref{prop:constrPrep}.

\begin{definition}
\label{def:sliver}
Let $\epsilon\in(0,1)$, and for each $i\in\{2,\ldots,n\}$ let $(p_i,q_i)$ be an interval with $0 < p_i < q_i < +\infty$.  Write $(p,q) = (p_2,q_2)\times \cdots \times (p_n,q_n)$.  Define $\psi:(0,\epsilon)\times(p,q)\to\RR^n$ by
\[
\psi(t) = (t_1, t_{1}^{t_2}, t_{1}^{t_3},\ldots,t_{1}^{t_n}),
\]
where $t = (t_1,\ldots,t_n)$.   We call $\psi$ a {\bf sliver function for} a set $A\subset \RR^n$ if $\im(\psi)\subset A$, and in this case we also call $\im(\psi)$ a {\bf sliver in $A$}.
\end{definition}

\begin{remark}\label{rmk:sliver}
A sliver function $\psi:(0,\epsilon)\times(p,q)\to A$ is an analytic isomorphism onto its image, since it is clearly an injective analytic map and its Jacobian matrix is lower triangular with diagonal entries which are nonzero on $(0,\epsilon)\times(p,q)$. In particular, the image of $\psi$ is an open subset of $A$.
\end{remark}

If $\psi:(0,\epsilon)\times(p,q) \to \RR^n$ is a sliver function and $\alpha = (\alpha_1,\ldots,\alpha_n)\in\QQ^n$, then
\[
\psi(t)^\alpha = t_{1}^{\alpha_1+\sum_{j=2}^{n}\alpha_j t_j},
\]
so the asymptotic behavior of $\psi(t)^\alpha$ as $t_1\to 0$ is determined by the values of the affine function $t_{>1}\mapsto \alpha_1+\sum_{j=2}^{n}\alpha_j t_j$ on $(p,q)$.  By shrinking $(p,q)$ we may gain more control over the asymptotics of $\psi(t)^\alpha$ as $t_1\to 0$.
The following lemma is therefore useful when studying the relative asymptotics of finite sets of rational monomial functions along slivers, and through extension via the preparation theorems, of prepared subanalytic and constructible functions along slivers.

%
%

\begin{lemma}\label{lemma:affine}
Let $f_1,\ldots,f_k$ be distinct affine functions on an open set $U\subset\RR^n$.  Then there exist $c > 0$, $i\in\{1,\ldots,k\}$, and an open set $V$ in $U$ such that $f_i(x) + c < f_j(x)$ on $V$ for all $j\neq i$.
\end{lemma}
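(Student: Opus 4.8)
The plan is to reduce everything to a single well-chosen point of $U$, exploiting the fact that distinct affine functions can agree only on a hyperplane. First I would analyze the \emph{coincidence loci}. For each pair $i\neq j$ the difference $f_i - f_j$ is an affine function on $\RR^n$; since the $f_j$ are distinct, $f_i - f_j$ is not identically zero. Hence $\{x\in\RR^n : f_i(x) = f_j(x)\}$ is either empty (when $f_i - f_j$ is a nonzero constant) or a proper affine hyperplane (when $f_i - f_j$ is nonconstant). In particular none of these loci can be all of $\RR^n$. Setting $H := \bigcup_{i\neq j}\{x : f_i(x) = f_j(x)\}$, we see that $H$ is a finite union of proper affine subspaces, so $H$ is closed and nowhere dense in $\RR^n$.

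Next, assuming $U$ is nonempty (otherwise take $V = \emptyset$ and the statement holds vacuously), I would use that $U$ is open and $H$ is nowhere dense to conclude that $U\setminus H$ is a nonempty open subset of $U$. Pick any $x_0\in U\setminus H$. By the definition of $H$, the real numbers $f_1(x_0),\ldots,f_k(x_0)$ are pairwise distinct. Let $i$ be the index achieving the minimum value, so $f_i(x_0) < f_j(x_0)$ for every $j\neq i$, and set $c := \tfrac{1}{2}\min_{j\neq i}\bigl(f_j(x_0) - f_i(x_0)\bigr)$, which is strictly positive.

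Finally I would pass from the single point $x_0$ to a neighborhood by continuity. Each $f_j - f_i$ is continuous and satisfies $f_j(x_0) - f_i(x_0) \geq 2c$ for all $j\neq i$, so there is an open ball $V\subset U$ centered at $x_0$ on which $f_j(x) - f_i(x) > c$ for every $j\neq i$; equivalently $f_i(x) + c < f_j(x)$ on $V$. This produces the required $c$, $i$, and $V$, completing the argument.

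There is essentially no serious obstacle in this proof; the only point requiring care is the structural observation that two \emph{distinct} affine functions coincide on at most a hyperplane, so that finitely many coincidence loci cannot cover the open set $U$. Once a point $x_0$ off all these hyperplanes is secured, the strict minimizer at $x_0$ together with continuity immediately yields the uniform gap $c$ on a small neighborhood.
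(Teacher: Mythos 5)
Your proof is correct and follows essentially the same route as the paper's: both remove the nowhere dense coincidence locus where two of the affine functions agree, pick a point where all values are distinct, take the minimizing index, and conclude by continuity on a small neighborhood. Your write-up merely makes explicit the choice of $c$ and the continuity step that the paper leaves implicit.
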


\begin{proof}
The set
$
C = \{x\in U : \text{$f_i(x) = f_j(x)$ for distinct $i,j \in \{1,\ldots,k\}$}\}
$
is closed and nowhere dense in $U$, so $U\setminus C$ is open and dense in $U$.  Choose $a\in U\setminus C$, and note that the values of $f_1(a),\ldots,f_k(a)$ are distinct.  Let $V$ be any sufficiently small neighborhood of $a$ in $U\setminus C$.
\end{proof}

\begin{lemma}\label{lemma:sliverMonomialLimit}
Fix a sliver function $\psi:(0,\epsilon)\times(p,q)\to \RR^n$ and fix $\beta = (\beta_1,\ldots,\beta_n) \in\QQ^n\setminus\{0\}$  such that $\psi(t)^\beta$ is bounded on $(0,\epsilon)\times(p,q)$. Then, up to shrinking $(p,q)$, we can ensure that $\psi(t)^{\beta}\to 0$ as $t_1\to 0$ uniformly on $(p,q)$.
\end{lemma}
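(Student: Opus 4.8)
The plan is to reduce the whole statement to the sign of a single affine function. Recall the computation made just above the lemma: for $\beta=(\beta_1,\ldots,\beta_n)$ one has $\psi(t)^\beta = t_1^{E(t)}$, where $E(t) := \beta_1 + \sum_{j=2}^{n}\beta_j t_j$ is affine in $(t_2,\ldots,t_n)$ and independent of $t_1$. Since $t_1$ ranges over $(0,\epsilon)$ with $\epsilon<1$, the quantity $t_1^{E(t)}$ tends to $0$, stays equal to $1$, or blows up as $t_1\to 0$ according to whether $E(t)$ is positive, zero, or negative. Thus the lemma is really the assertion that, after shrinking $(p,q)$, one can force $E$ to be bounded below by a positive constant.

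First I would extract the sign of $E$ from the boundedness hypothesis. If $E(t^\ast)<0$ for some $t^\ast=(t^\ast_2,\ldots,t^\ast_n)\in(p,q)$, then freezing $(t_2,\ldots,t_n)=t^\ast$ and letting $t_1\to 0^+$ makes $t_1^{E(t^\ast)}\to+\infty$, contradicting the assumed boundedness of $\psi(t)^\beta$ on $(0,\epsilon)\times(p,q)$. Hence $E\geq 0$ throughout $(p,q)$, and by continuity on the closed box $[p,q]$.

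Next I would produce a subbox on which $E$ is bounded below by a positive constant, and here I would invoke Lemma \ref{lemma:affine}. Since $\beta\neq 0$, the affine function $E$ is not identically $0$ (it vanishes identically only when all $\beta_j=0$), so the two affine functions $0$ and $E$ are distinct. Applying Lemma \ref{lemma:affine} to them yields a constant $c>0$, an index $i\in\{1,2\}$, and an open set $V\subset(p,q)$ on which either $E>c$ (if $i$ selects the function $0$) or $E<-c$ (if $i$ selects $E$). The second alternative is impossible because $E\geq 0$ on $(p,q)$, so in fact $E>c$ on $V$. Shrinking to a subbox $(p',q')$ with $[p',q']\subset V$ then gives $E\geq c>0$ there, and replacing $(p,q)$ by $(p',q')$ is exactly the permitted shrinking.

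Finally, on the shrunk sliver we have $0<t_1<\epsilon<1$ and $E(t)\geq c>0$, so, since the base $t_1$ lies in $(0,1)$, the map $s\mapsto t_1^{s}$ is decreasing and $0<\psi(t)^\beta = t_1^{E(t)}\leq t_1^{c}$, a bound that depends only on $t_1$ and tends to $0$ as $t_1\to 0$. Taking the supremum over $(t_2,\ldots,t_n)\in(p',q')$ gives $\psi(t)^\beta\to 0$ uniformly as $t_1\to 0$, as required. The only real content, and the step I would treat most carefully, is the passage from $E\geq 0$ to $E\geq c>0$: it is where the hypothesis $\beta\neq 0$ is used to exclude the borderline exponent $E\equiv 0$, and where Lemma \ref{lemma:affine} (with the trivial affine function adjoined) does the work of separating $E$ from $0$ on a subbox.
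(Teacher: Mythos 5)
Your proof is correct and follows essentially the same route as the paper's: deduce $E\geq 0$ on $(p,q)$ from boundedness, use $\beta\neq 0$ to get $E\geq c>0$ on a shrunk box, and conclude $0<\psi(t)^\beta\leq t_1^c$. The only difference is that the paper asserts the passage from $E\geq 0$ to $E\geq c$ directly, whereas you justify it by invoking Lemma \ref{lemma:affine} with the zero function adjoined, which is a valid (if slightly heavier) way to fill in that step.
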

\begin{proof}
The function
 $
\psi(t)^\beta = t_{1}^{\beta_1+\sum_{i=2}^{n}\beta_i t_i}
 $
is bounded on $(0,\epsilon)\times(p,q)$, thus $\beta_1+\sum_{i=2}^{n}\beta_i t_i \geq 0$ on $(p,q)$.  Since $\beta\not=0$, up to shrinking the intervals $(p_i,q_i)$, we can ensure that $\beta_1+\sum_{i=2}^{n}\beta_i t_i \geq c$ on $(p,q)$ for some $c > 0 $.  Then $0 < \psi(t)^\beta \leq t_{1}^{c}$, so $\psi(t)^\beta$  tends uniformly to $0$ as $t_1\to 0$.
\end{proof}

\begin{proposition}\label{lemma:sliver}
Let $A\subset(0,1)^n$ be an open, subanalytic cell
which is prepared with center $0$ and is such that $x_1,\ldots,x_n$ are all asymptotically undetermined on $A$. Then there exists a sliver function for $A$.
\end{proposition}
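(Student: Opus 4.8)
The plan is to induct on $n$, constructing the sliver function one coordinate at a time while shrinking both the exponent box $(p,q)$ and the parameter $\epsilon$. Since $A\subset(0,1)^n$ has center $0$, in the notation of Definitions \ref{def:Abd} we have $\varepsilon_i=\zeta_i=1$ and $A_{\bd}=A$, so the boundary data $a_i,b_i$ of \eqref{eq:Abd} are honest functions of $x_{<i}$, and by Corollaries \ref{cor:asympt} each $b_i$, and each $a_i$ when $a_i>0$, is a rational monomial in $x_{<i}$ times a strong subanalytic unit. For the base case $n=1$, asymptotic undetermination of $x_1$ forces $a_1=0$, so $A=(0,b_1)$ and $\psi(t_1)=t_1$ on $(0,b_1)$ works. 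For the inductive step, I would first note that $A'=\Pi_{n-1}(A)$ is again an open cell prepared with center $0$ on which $x_1,\dots,x_{n-1}$ are all asymptotically undetermined; these properties are inherited directly from the recursive clause of ``prepared'' in Definitions \ref{def:Abd} and from the fact that the data $a_i,b_i$ for $i\le n-1$ agree for $A$ and $A'$. By the induction hypothesis there is a sliver function $\psi'$ for $A'$ on a box $(0,\epsilon')\times(p_2,q_2)\times\cdots\times(p_{n-1},q_{n-1})$, and because the first $n-1$ coordinates of any sliver function are exactly such a $\psi'$, the task reduces to adjoining an interval $(p_n,q_n)$ and shrinking $\epsilon\le\epsilon'$ so that $a_n(\psi'(t))<t_1^{t_n}<b_n(\psi'(t))$ on the enlarged box.

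The key computation is that substituting $x=\psi'(t)=(t_1,t_1^{t_2},\dots,t_1^{t_{n-1}})$ into a monomial $x_{<n}^{\gamma}$ yields $t_1$ raised to the affine function $\gamma_1+\sum_{j\ge 2}\gamma_j t_j$ of $t_{>1}$. Writing $b_n=x_{<n}^{\beta}v_n$ and, when $a_n>0$, $a_n=x_{<n}^{\alpha}u_n$, the constraint $a_n<t_1^{t_n}<b_n$ becomes, after taking logarithms and dividing by $\log t_1<0$,
\[
L^b(t_{>1})+\delta_b(t) < t_n < L^a(t_{>1})+\delta_a(t),
\]
where $L^a,L^b$ are the affine functions attached to $\alpha,\beta$ and the error terms $\delta_a,\delta_b$ have the form $\log(\text{unit})/\log t_1$. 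The point is that these errors tend to $0$ uniformly in $t_{>1}$ as $t_1\to 0$, because a strong subanalytic unit is bounded above and below by positive constants. (When $a_n=0$ only the upper bound survives, since $0<t_1^{t_n}$ automatically, and the argument simplifies.)

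To choose the interval, I would argue as follows. From $a_n<b_n$ on $A'$ one gets $L^a\ge L^b$ throughout the box, since otherwise the ratio $a_n/b_n$ would blow up as $t_1\to 0$; meanwhile asymptotic undetermination of $x_n$ gives $\alpha\neq\beta$ via Corollaries \ref{cor:asympt}, so $L^a-L^b$ is a nonzero affine function. Hence $\{L^a=L^b\}$ is a nowhere dense hyperplane (the two-function instance of Lemma \ref{lemma:affine}), so $L^a-L^b\ge c>0$ on a suitable subbox $B''$. Shrinking $B''$ further so that $L^a$ and $L^b$ each oscillate by less than $c/4$, I obtain $\inf_{B''}L^a>\sup_{B''}L^b+c/2$, which produces a fixed interval lying simultaneously strictly below $L^a$ and strictly above $L^b$ over all of $B''$. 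Choosing $(p_n,q_n)$ inside $(\sup_{B''}L^b,\inf_{B''}L^a)$ with a little room to spare, and then $\epsilon$ small enough that $|\delta_a|,|\delta_b|$ stay within that room, secures the displayed inequality for every $t$. Finally, $a_n,b_n<1$ force $L^a,L^b\ge 0$, so $p_n>0$ and $\psi$ is a genuine sliver function for $A$.

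The hardest step is this last uniform choice of $(p_n,q_n)$. The naive prescription ``take $t_n$ strictly between $L^b(t_{>1})$ and $L^a(t_{>1})$'' depends on the point $t_{>1}$, and collapsing that pointwise gap into a single interval valid over the entire box is exactly what forces the double shrinking of $B''$: first to separate $L^a$ from $L^b$ with a uniform gap, and then to make both affine functions nearly constant. This must be combined with the uniform decay of the unit-correction terms $\delta_a,\delta_b$, which is where the boundedness of strong subanalytic units is essential.
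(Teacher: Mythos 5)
Your proposal is correct and follows essentially the same route as the paper's proof: induction on $n$, substitution of the sliver $\psi'$ for $\Pi_{n-1}(A)$ into the prepared boundary data $a_n=x_{<n}^{\alpha}u_n$, $b_n=x_{<n}^{\beta}v_n$, passage to base-$t_1$ logarithms so that the unit corrections decay uniformly, and use of $\alpha\neq\beta$ (from asymptotic undetermination, via Corollaries \ref{cor:asympt}) to separate the two affine functions by a uniform gap on a shrunken box before choosing $(p_n,q_n)$ and $\epsilon$. The only cosmetic difference is that you route the separation step through Lemma \ref{lemma:affine}, whereas the paper shrinks $(p',q')$ directly; the argument is the same.
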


\begin{proof}
We proceed by induction on $n$. If $n=1$ the statement is easy, since $A$ is then of the form $(0,b)$ with $b \leq 1$. Suppose next that $n>1$ and that we have chosen a sliver function $\psi':  (0,\epsilon)\times(p',q')\to \Pi_{n-1}(A)$ for $\Pi_{n-1}(A)$, where $(p',q') = (p_2,q_2)\times\cdots\times(p_{n-1},q_{n-1})$.
Write
$$
A=\{x : x_{<n}\in\Pi_{n-1}(A),\  a(x_{<n})<   x_n < b(x_{<n}) \}.
$$
We suppose that $a>0$ on $\Pi_{n-1}(A)$, for the case $a = 0$ is proven by simply omitting $a$ in the following argument.  Hence we can write
$$
a(x_{<n}) = u(x_{<n}) x_1^{\alpha_1} \cdots x_{n-1}^{\alpha_{n-1}}
$$
and
$$
b(x_{<n}) = v(x_{<n}) x_1^{\beta_1} \cdots x_{n-1}^{\beta_{n-1}} ,
$$
for unique $\alpha = (\alpha_1,\ldots,\alpha_n)$ and $\beta = (\beta_1,\ldots,\beta_n)$ in $\QQ^{n-1}$.  Consider the inequalities
\begin{equation}\label{eq:sliver}
a(\psi'(t_{<n})) = u(\psi'(t_{<n})) t_{1}^{\alpha_{1} + \sum_{j=2}^{n-1} \alpha_{j} t_j}
 <
 t_{1}^{t_n}
 <
 v (\psi'(t_{<n})) t_{1}^{\beta_{ 1} + \sum_{j=2}^{n-1} \beta_{j} t_j} =  b(\psi_0(t_{<n})).
\end{equation}
on $(0,\epsilon)\times(p',q')$.  Note that $0 < a < b$ on $\Pi_{n-1}(A)$, so for each $t_{<n}$ there exists $t_n$ satisfying the above inequalities.  Taking the logarithm with base $t_1$ of these inequalities, one gets
\begin{equation}\label{eq:logSliver}
\log_{t_1}(u(\psi_0(t_{<n}))) + \alpha_{1} + \sum_{j=2}^{n-1} \alpha_{j} t_j
>
t_n
>
\log_{t_1}(v(\psi_0(t_{<n}))) +\beta_{ 1} + \sum_{j=2}^{n-1} \beta_{j} t_j.
\end{equation}
Note that $\log_{t_1}(u(\psi_0(t_{<n})))$ and $\log_{t_1}(v(\psi_0(t_{<n})))$ go to zero uniformly when $t_1\to 0$,  so $\alpha_{1} + \sum_{j=2}^{n-1} \alpha_{j} t_j \geq \beta_{ 1} + \sum_{j=2}^{n-1} \beta_{j} t_j$ on $(p',q')$.  Since $x_n$ is asymptotically undetermined, one has $\alpha\not=\beta$, so up to shrinking $(p',q')$, we may fix positive constants $p_n < q_n$ and $\delta$ such that
\[
\alpha_{1} + \sum_{j=2}^{n-1} \alpha_{j} t_j \geq q_n +\delta \quad\text{and}\quad p_n - \delta \geq \beta_{ 1} + \sum_{j=2}^{n-1} \beta_{j} t_j
\]
on $(p',q')$.  Put $(p,q) = (p_2,q_2)\times\cdots\times(p_n,q_n)$.   Up to shrinking $\epsilon$, we may ensure that $|\log_{t_1}(u(\psi_0(t_{<n})))|$ and $|\log_{t_1}(v(\psi_0(t_{<n})))|$ are bounded above by $\delta$ on $(0,\epsilon)\times(p,q)$.  Thus \eqref{eq:logSliver} holds on $(0,\epsilon)\times(p,q)$, so \eqref{eq:sliver} does as well.
\end{proof}

%
%

\section{Coordinate transforms}\label{sec:goodCoord}

Let $A\subset(0,1)^n$ be an open subanalytic cell which is prepared with center $0$.
In order to use the existence of slivers, we will perform a coordinate change $H:B\to A$ such that the transformed cell $B$ satisfies the conditions of Proposition \ref{lemma:sliver}. Let
\[
J = \{j\in\{1,\ldots,n\} : \text{$x_j$ is asymptotically undetermined on $A$}\},
\]
and write
$
\{1,\ldots,n\}\setminus J = \{d_1 < \cdots < d_k\}$ for some $k\geq 0$.
Define $H := H_{d_1}\circ\cdots\circ H_{d_k}:B\to A$, where the $H_{d_i}$ are defined as follows.

Writing $d=d_1$, one has  $d > 1$, as follows from Definitions \ref{def:Abd}. By
Definitions \ref{def:Abd} and Corollary \ref{cor:asympt}.1, we can write
\[
\Pi_d(A ) = \{x_{\leq d} : x_{<d}\in\Pi_{d-1}(A ),\ x_{<d}^{\alpha} u(x_{<d}) < x_d < x_{<d}^{\alpha} v(x_{<d})\}
\]
for some $\alpha\in\QQ^{d-1}$ with support in $\{1,\ldots,d-1\}\cap J$ and strong subanalytic units $u$ and $v$ such that $x_{<d}^{\alpha} v(x_{<d}) - x_{<d}^{\alpha} u(x_{<d})$ is $J$-prepared on $\Pi_{d-1}(A )$.  It follows that $v-u$ is also $J$-prepared on $\Pi_{d-1}(A )$. Let $R_d>0$ be a constant such that $v-u\leq R_d$ on $\Pi_{d-1}(A )$. Define $H_d:\Pi_{d-1}( A  )\times(0,1)^{n-d+1}\to\RR^n$ by
\[
H_d(y) = (y_1,\ldots,y_{d-1},\,\,\, y_{<d}^{\alpha}(R_d y_d + u(y_{<d})),\,\,\, y_{d+1},\ldots,y_n),
\]
and define
\[
D := H_{d}^{-1}(A ).
\]
By restricting to $D$ this defines a map $H_d:D\to A $ where we let $y$ range over $D$ and write $x = H_d(y)$.
Clearly $D$ is an open cell contained in $(0,1)^n$,
\[
\Pi_d(D) = \left\{y_{\leq d} : y_{<d}\in\Pi_{d-1}(D),
0 < y_d<  \frac{v(y_{<d})-u(y_{<d})}{R_d}\right\},
\]
and thus $y_d$ is asymptotically undetermined on $D$.  Note also that
$R_d y_d + u(y_{<d})$
is a strong subanalytic unit on $\Pi_d(D)$ with center $0$.
Now construct $H_{d_2}$ as $H_d$, but by starting with $D$ instead of with $A $, and so on, up to the map $H_{d_k}$.

\begin{lemma}\label{lem:goodCoord}
With notation from the above construction, and writing $x = H(y)$
for  $y = (y_1,\ldots,y_n)$ in $B$, the set $B$ is an open, subanalytic cell in $(0,1)^n$ which is $J$-prepared with center $0$, and for all $i$,
\begin{renumerate}

\item $y_i$ is asymptotically undetermined on $B$,

\item $y_i$ is constrained on $B$ if and only if $x_i$ is constrained on $A$ and $i \in J$.
\end{renumerate}
Moreover, for any subanalytic function $g:A\to\RR$ which is prepared on $A$ with center $0$, $g\circ H$ is $J$-prepared on $B$ with center $0$, and for every strong subanalytic unit $w$ on $A$ with center $0$, $w\circ H$ is a strong subanalytic unit on $B$ with center $0$.
\end{lemma}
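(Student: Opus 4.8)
The plan is to reduce the whole statement to a single-step analysis of one map $H_d$ and then iterate. First I would record the following properties of a single transform $H_d:D\to C$, where $C\subset(0,1)^n$ is an open cell that is $J$-prepared with center $0$ (the role played by $A$, with $J$ its original undetermined set) and $d\in\{1,\ldots,n\}\setminus J$ is a coordinate along which $C$ is asymptotically determined. Writing $x=H_d(y)$, the map has $x_j=y_j$ for $j\neq d$ and $x_d=y_{<d}^{\alpha}\,(R_d y_d+u(y_{<d}))$, and the construction already supplies that $D$ is an open cell in $(0,1)^n$, that $y_d$ is asymptotically undetermined (indeed unconstrained) on $D$, and that $\tld{w}:=R_d y_d+u(y_{<d})$ is a strong subanalytic unit on $\Pi_d(D)$ with center $0$. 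The claims I would establish are: \emph{(d1)} composing any strong subanalytic unit on $C$ with center $0$ with $H_d$ yields a strong subanalytic unit on $D$ with center $0$; \emph{(d2)} composing any $J$-prepared $g=|\tld{x}|^{\gamma}u$ on $C$ with $H_d$ yields $|\tld{y}|^{\gamma}\,(u\circ H_d)$, again $J$-prepared on $D$ with the same exponent $\gamma$; and \emph{(d3)} $D$ is again $J$-prepared, its asymptotically undetermined set is exactly that of $C$ together with $\{d\}$, and for $i\neq d$ the coordinate $y_i$ is constrained (resp.\ undetermined) on $D$ precisely when $x_i$ is on $C$, while $y_d$ is unconstrained.

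The engine is (d1), which I expect to be the main obstacle. Here I would take a strong unit $w=W\circ\varphi$ on $C$ with $\varphi=(|\tld{x}|^{\beta_1},\ldots,|\tld{x}|^{\beta_N})$ bounded and $W$ an analytic unit on $\cl(\im\varphi)$, and substitute $x=H_d(y)$. Since $x_j=y_j$ for $j\neq d$ and $x_d=y_{<d}^{\alpha}\tld{w}$, each coordinate becomes $|H_d(y)|^{\beta_l}=|y|^{\beta'_l}\,\tld{w}^{\,\beta_{l,d}}$ for a new tuple $\beta'_l$ whose $y_d$-component is $0$. The point is that $\tld{w}$ is a strong unit, hence bounded and bounded away from $0$, so each rational power $\tld{w}^{\beta_{l,d}}$ is an analytic unit of bounded monomials in $y$; writing $u$ in its strong-unit form and collecting all $y$-monomials that occur into a single bounded tuple $\varphi'$, one repackages $w\circ H_d=W'\circ\varphi'$ for an analytic function $W'$ that is a unit on $\cl(\im\varphi')$. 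The two facts that make this go through are that $\tld{w}$ is bounded away from $0$ (so rational powers and the outer composition stay analytic) and that $H_d$ maps $D$ into the bounded set $C$ (so all arguments remain in the compact domain where $W$ is analytic and nonvanishing). I would not grind through the indices, only verify that the collected tuple is bounded and that $W'$ does not vanish.

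Given (d1), part (d2) is immediate: $\gamma$ has support in $J$ and $d\notin J$, so $\gamma_d=0$ and $|H_d(y)|^{\gamma}=|y|^{\gamma}$, while $u\circ H_d$ is a strong unit by (d1). Part (d3) I would deduce from Corollary \ref{cor:asympt}.1: because the walls $a_i,b_i$ of $C$ are $J$-supported and $d\notin J$, substituting $x_d=y_{<d}^{\alpha}\tld{w}$ alters only their unit factors (by (d1)) and leaves their monomial exponents unchanged, so for $i\neq d$ the conditions $a_i>0$ and $\alpha_i=\beta_i$ transfer verbatim between $C$ and $D$; the coordinate $d$ is handled by the explicit walls $0<y_d<(v-u)/R_d$ exhibited in the construction, with $v-u$ being $J$-prepared. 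The recursive clauses defining $J$-preparedness in Definitions \ref{def:Abd}, including that $0$ lies in the closure of the image of each lower wall, are preserved because each transformed wall equals the original one composed with a bijection of the appropriate projection, so images (and their closures) are unchanged.

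Finally I would assemble the Lemma by induction on $k=|\{1,\ldots,n\}\setminus J|$, applying the single-step result to $H_{d_1},\ldots,H_{d_k}$ in turn. Starting from $C=A$ (which is $J$-prepared because it is prepared with $J$ its undetermined set), each $H_{d_i}$ keeps the cell $J$-prepared and enlarges the undetermined set by $\{d_i\}$, and by (d3) the coordinate $d_i$ is asymptotically determined on the intermediate cell so the transform is legitimate at each stage; hence $B$ is $J$-prepared with every coordinate undetermined, giving (i). Since each $H_{d_i}$ acts as the identity on coordinates $<d_i$ and never re-touches a coordinate once transformed, the constrainedness bookkeeping of (d3) composes to give (ii); and (d1)--(d2) compose to yield that $g\circ H$ is $J$-prepared with center $0$ and that strong subanalytic units pull back to strong subanalytic units, completing the proof.
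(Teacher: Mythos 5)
Your proposal is correct and fills in, with explicit bookkeeping, exactly the argument the paper leaves implicit: the paper's proof simply says the unit statement "follows with a similar argument as in case 2 of the proof of Theorem \ref{thm:cellPrep}" (which is precisely your repackaging of $w\circ H_d$ via analyticity of $W$ and of powers of the nonvanishing unit $R_d y_d+u(y_{<d})$ on compact closures of bounded monomial images) and that "the other statements follow from the construction" (your (d2), (d3) and the induction on $k$). So this is essentially the same approach, written out in full.
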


\begin{proof}
The last sentence follows with a similar argument as in case 2 of the proof of Theorem \ref{thm:cellPrep}, since there a similar transformation is performed. The other statements follow from the construction.
\end{proof}

\section{Integrability and preparation of constructible functions}\label{s:constrPrep}

In this section we prove our
key technical result, Proposition \ref{prop:constrPrep},
which gives a strong connection between  integrability conditions and  the  preparation result for constructible functions
given by Theorem \ref{thm:constrPrep}.  Slivers and the transformation
$H$ from Section \ref{sec:goodCoord}
are used  to prove
Proposition \ref{prop:constrPrep}.

\begin{proposition}
\label{prop:constrPrep}
Consider the situation and notation of Theorem \ref{thm:constrPrep}. Fix $f\in\F$ and a cell $A$ from the decomposition of $X$, and write $f(x) = \sum_{i\in I} T_i(x)$, as in \eqref{eq:sumT}.
 Then the following statements are equivalent:
\begin{renumerate}{\setlength{\itemsep}{3pt}
\item
For all $x_{< n}\in\Pi_{n-1}(A)$, the function $f(x_{<n},\cdot)$ is integrable over the fiber $A_{x_{<n}}$.

\item
The set of the $x_{<n }$ in $\Pi_{n-1}(A)$ for which the function $f(x_{<n},\cdot)$ is integrable over $A_{x_{<n}}$ is dense in $\Pi_{n-1}(A)$.

\item
For all $x_{<n}\in\Pi_{n-1}(A)$ and all $i\in I$, the function
$T_i(x_{<n},\cdot)$ is integrable over $A_{x_{<n}}$.
}
\end{renumerate}
\end{proposition}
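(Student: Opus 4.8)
The plan is to establish the cycle $(\mathrm{iii})\Rightarrow(\mathrm{i})\Rightarrow(\mathrm{ii})\Rightarrow(\mathrm{iii})$, in which the first two implications are soft and essentially all the content sits in $(\mathrm{ii})\Rightarrow(\mathrm{iii})$. I would begin by clearing away the trivial configurations of the cell $A$. If $A$ is thin in $x_n$, then every fiber $A_{x_{<n}}$ is a single point, so all three statements hold vacuously. If $A$ is fat in $x_n$ but either $n\notin J$, or $n\in J$ with $\tld{x}_n$ constrained, then Corollary \ref{cor:asympt}.1 together with Definitions \ref{def:Abd} show that each fiber is a compact interval bounded away from the center and from infinity; there every $T_i(x_{<n},\cdot)$ is continuous with compact support and hence integrable, so $(\mathrm{iii})$ holds outright and the equivalence is immediate. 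Thus I may assume $n\in\im(\lambda)$, $n\in J$, and $\tld{x}_n$ unconstrained, so that $a_d=0$ and every fiber runs out to the same \emph{bad endpoint}, namely $\tld{x}_n\to0$ when $\zeta_n=1$ and $\tld{x}_n\to+\infty$ when $\zeta_n=-1$; which of the two occurs is a feature of the cell and not of $x_{<n}$.

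The organizing observation in this reduced situation is a \textbf{uniformity} principle: whether a single term $T_i(x_{<n},\cdot)$ is integrable over its fiber is independent of $x_{<n}$. Indeed, for fixed $x_{<n}$ the factors indexed by $j\in J$ with $j<n$ are nonzero constants (the center conditions of Definition \ref{def:center} force $\tld{x}_j\neq0,\pm1$), the unit $u_i$ is bounded above and below, and therefore $|T_i(x_{<n},\tld{x}_n)|\asymp|\tld{x}_n|^{\alpha_n(i)}\,|\log|\tld{x}_n||^{\ell_n(i)}$ near the bad endpoint. Such a function is integrable exactly when $\alpha_n(i)>-1$ at a $0$-endpoint, respectively $\alpha_n(i)<-1$ at an $\infty$-endpoint, a condition on the pair $(\alpha_n(i),\ell_n(i))$ alone. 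Consequently $(\mathrm{iii})$ is equivalent to the statement that each $T_i$ is integrable on some (equivalently every) fiber; $(\mathrm{iii})\Rightarrow(\mathrm{i})$ because a finite sum of integrable functions is integrable; and $(\mathrm{i})\Rightarrow(\mathrm{ii})$ is trivial.

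For $(\mathrm{ii})\Rightarrow(\mathrm{iii})$ I would argue by contraposition: assuming some $T_{i_0}$ violates the integrability condition above, I would produce a nonempty open $W\subset\Pi_{n-1}(A)$ on which $f(x_{<n},\cdot)$ fails to be integrable, contradicting the density in $(\mathrm{ii})$. Since the integrable terms contribute an integrable function over every fiber, I may discard them and assume that \emph{every} $T_i$ is non-integrable. Grouping the remaining terms by their pair $(\alpha_n(i),\ell_n(i))$, the monomials $|\tld{x}_n|^{\alpha_n}|\log|\tld{x}_n||^{\ell_n}$ form an asymptotic scale at the bad endpoint (ordered by $\alpha_n$, then by $\ell_n$), and the leading, most divergent class is non-integrable. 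On the locus where its total coefficient $c(x_{<n}):=\sum_{i\in\mathrm{class}}C_i(x_{<n})\bar{u}_i(x_{<n})$ is nonzero, where $C_i$ collects the $j<n$ factors and $\bar{u}_i$ is the nonzero boundary value of $u_i$, the sum $f(x_{<n},\cdot)$ is asymptotic to a nonzero multiple of the divergent leading monomial and so is non-integrable. As $c$ is subanalytic in $x_{<n}$, it is either identically zero or nonzero on an open dense set, so the whole argument reduces to the \emph{non-cancellation claim}: within any nonempty class, $c$ is not identically zero.

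This non-cancellation claim is the heart of the matter, and it is precisely what the sliver machinery is built to supply. Distinct terms of a single class share the value of $\ell_n$ but, by the distinctness clause of Theorem \ref{thm:constrPrep}, have distinct restricted tuples $(\ell_j(i))_{j\in J,\,j<n}$. I would first apply the coordinate transform $H$ of Section \ref{sec:goodCoord} (Lemma \ref{lem:goodCoord}) to the normalized base, replacing $\Pi_{n-1}(A)$ by a cell $B$ on which all coordinates are asymptotically undetermined and each $C_i\bar{u}_i$ is again prepared, and then invoke Proposition \ref{lemma:sliver} to obtain a sliver function $\psi$ for $B$. Along $\psi$ each factor $|\tld{x}_j|^{\alpha_j(i)}$ raises $t_1$ to an affine exponent in $t$, while $\log|\tld{x}_j|$ contributes $t_j\log t_1$ to leading order, so $c\circ\psi$ becomes a finite combination of terms of the shape $t_1^{L_i(t)}(\log t_1)^{|\ell_i|}M_i(t_{>1})$ with $L_i$ affine, $|\ell_i|=\sum_{j<n}\ell_j(i)$, and $M_i$ a monomial. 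Using Lemma \ref{lemma:affine} to isolate a strictly dominant $t_1$-rate, breaking remaining ties by the power of $\log t_1$ and finally by the linear independence of the distinct monomials $M_i$, I can force a single term $C_{i^\ast}\bar{u}_{i^\ast}\neq0$ to survive on a suitable subsliver, whence $c\circ\psi\not\equiv0$ and so $c\not\equiv0$. I expect this asymptotic separation along slivers, reconciling the three distinct mechanisms ($t_1$-rate, $\log t_1$-degree, residual monomials) by which the terms are told apart, to be the principal technical obstacle; the residual endpoint bookkeeping (the $\infty$-endpoint reduces to the $0$-endpoint via the normalization $y_d=\tld{x}_n^{-1}$ of Definitions \ref{def:Abd}) is routine.
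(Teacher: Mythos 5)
Your proposal is correct and follows essentially the same route as the paper's proof of (ii)$\Rightarrow$(iii): reduce to the case that $\tld{x}_n$ is unconstrained, push everything through $G^{[\theta]}_A$ and the transform $H$ onto a sliver, use Lemma \ref{lemma:affine} to isolate a dominant affine $t_1$-exponent, and break the remaining ties first by the $\log t_1$-degree and then by the linear independence of the distinct monomials in $t_{>1}$ supplied by the distinctness clause of Theorem \ref{thm:constrPrep}. One cosmetic slip: your leading coefficient $c$ is constructible rather than subanalytic (it contains logarithms), but the dichotomy you invoke still holds because $c$ is real-analytic on the connected cell --- and is in fact unnecessary, since the open subsliver on which $c$ is bounded away from zero already meets the dense set furnished by (ii).
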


\begin{proof}
Statement (i) clearly implies (ii), and (iii) clearly implies (i), thus the only nontrivial implication is that (ii) implies (iii).

So assume (ii). We may assume that $\tld{x}_n$ is unconstrained on $A$, since otherwise there is nothing to prove.  Let $F = G_{A}^{[\theta]}\circ H: B\to A$ be the map constructed by composing the map $G_{A}^{[\theta]}:A_{\bd}\to A$ from Definitions \ref{def:Abd} with the map $H:B\to A_{\bd}$ from Lemma \ref{lem:goodCoord},
and write $y = (y_1,\ldots,y_d)$ for the coordinates on $B$.
For each $i\in I$ and $y\in B$, let
\begin{eqnarray}
\label{eq:Sdef}
S_i(y) & = & T_i(F(y)) \cdot \PD{}{F_n}{y_d}(y).
\end{eqnarray}
Thus $f\circ F(y) \PD{}{F_n}{y_d}(y) = \sum_{i\in I} S_i(y)$ on $B$.  It follows from the construction of $F$  and from the situation given by Theorem \ref{thm:constrPrep} that for each $i\in I$,
\begin{equation}\label{eq:Sjac}
S_i(y) = u_i(y) \prod_{k\in K} y_{k}^{r_k(i)} (\log y_k)^{\ell_k(i)},
\end{equation}
for $K = \lambda^{-1}(J)$, a strong subanalytic unit $u_i$ with center $0$, rational numbers $r_k(i)$, and nonnegative integers $\ell_k(i)$ such that the tuples $(\ell_k(i))_{k\in K}$ are distinct for different $i$ in $I$.  It is enough to show that $\bar{r} > -1$, where
\[
\bar{r} = \min\{r_d(i) : i\in I\}.
\]

Define
\begin{eqnarray*}
I_1 & = &  \{i\in I : r_d(i) = \bar{r}\},\\
\bar{\ell}  & = &   \max\{ \ell_d(i) : i\in I_1\},\\
I_2 & = & \{i\in I_1 : \ell_d(i) = \bar{\ell}\}.
\end{eqnarray*}
By Proposition \ref{lemma:sliver} we may fix a sliver function $\psi':(0,\epsilon)\times(p,q)\to B'$ for $B' = \Pi_{d-1}(B)$, where $(p,q) = (p_2,q_2)\times\cdots\times(p_{d-1},q_{d-1})$.  Write $t = (t_1,\ldots,t_{d-1})$ for a tuple of variables ranging over $(0,\epsilon)\times(p,q)$.  By Lemma \ref{lemma:affine}, by shrinking $(p,q)$, and up to reordering the terms $S_i$, we may assume that there is a positive constant $c$ such that for all $i\in I_2$ with $(r_k(i))_{k\in K}\not = (r_k(1))_{k\in K}$ one has
\begin{equation}\label{eq:I3dominance0}
c + r_1(1) + \sum_{k\in K\setminus\{1\}} r_k(1) t_k
\leq
r_1(i) + \sum_{k\in K\setminus\{1\}} r_k(i) t_k
\end{equation}
on $(p,q)$.
 Define
\begin{eqnarray*}
I_3
  & = &
    \{i\in I_2 : (r_k(i))_{k\in K} = (r_k(1))_{k\in K}\}.
\end{eqnarray*}
 For each $i\in I$ define
\[
\ell'(i) = \sum_{k\in K\setminus \{d\}} \ell_k(i)
\]
and define
\begin{eqnarray*}
\barellprime & = & \max\{\ell'(i) : i\in I_3\}, \\
I_4 & = & \{i\in I_3 : \ell'(i) = \barellprime\}.
\end{eqnarray*}
Define a map $\psi_0 : (0,\epsilon) \times (p,q) \times \RR\to \RR^d$ by
\[
\psi_0(t,y_d) = (\psi'(t), y_d),
\]
and put $B_0:= \psi_0^{-1}(B)$ and
$
\psi: B_0 \to B: (t,y_d)\mapsto \psi_0(t,y_d).
$

Consider the function $W:B\to \RR$ defined by
\[
W(y) = \left(\prod_{k\in K} y_k^{r_k(1)}\right)
    (\log y_1)^{ \barellprime  }
       (\log y_d)^{\bar{\ell}}.
\]
For any function $h:B\to \RR$ and for $(t,y_d)\in B_0$, write $h(t,y_d)$ for $h(\psi(t,y_d))$.
One then has for $(t,y_d)\in B_0$
\begin{equation}\label{eq:W}
W(t,y_d) =  t_{1}^{r_1(1) + \sum_{k\in K\setminus\{1,d\}} r_k(1) t_k}
    (\log t_1)^{ \barellprime  }
    y_{d}^{\bar r}
    (\log y_d)^{\bar{\ell}},
\end{equation}
and for $i\in I$, one has
\begin{eqnarray*}\label{eq:SsliverI3}
S_i(t,y_d)
    & = &
    t_{1}^{ r_1(i) + \sum_{k\in K\setminus\{1,d\}} r_k(i) t_k  }
    (\log t_1)^{\ell'(i)} y_{d}^{r_d(i)}(\log y_d)^{\ell_d(i) }
    \\
    &&
    \cdot\left(\prod_{k\in\{2,\ldots,d-1\}\cap J} t_{k}^{\ell_k(i)}\right)
    u_i (t,y_d)
\end{eqnarray*}
Thus, for $i\in I$  and $(t,y_d)\in B_0$,
\begin{eqnarray}
\label{eq:SW}
\frac{S_i (t,y_d) }{W(t,y_d) }
    & = &
    t_{1}^{r_1(i) - r_1(1) + \sum_{k\in K\setminus\{1,d\}} (r_k(i) - r_k(1))t_k}
    (\log t_1)^{\ell'(i) - \barellprime} \\
    & &
    \cdot \,\, y_{d}^{r_d(i) - \bar{r}}
    (\log y_d)^{\ell_d(i) - \bar{\ell} }
    \left(\prod_{k\in\{2,\ldots,d-1\}\cap J} t_{k}^{\ell_k(i)}\right)
    u_i (t,y_d). \nonumber
\end{eqnarray}
It follows from the definitions of the sets $I_1,\ldots,I_4$ and from \eqref{eq:I3dominance0} and \eqref{eq:SW} that for all $i\in I\setminus I_4$,
\begin{equation}\label{eq:I4dominance}
\lim_{t_1\to 0}
\left(
\lim_{y_d\to 0}
\frac
    {
        S_i(t,y_d)
    }
    {
        W(t,y_d)
    }
\right)
= 0,
\end{equation}
where the limit $t_1\to 0$ is uniform on $(p,q)$, and that for all $i\in I_4$,
\begin{equation}\label{eq:I4class}
\frac{S_i(t,y_d)}{W(t,y_d)}
    =
    \left(\prod_{k\in\{2,\ldots,d-1\}\cap J} t_{k}^{\ell_k(i)}\right)
    u_i (t,y_d).
\end{equation}

Write $u_i(y) = U_i\circ\varphi_i(y)$, where $\varphi_i$ is a bounded rational monomial map on $B$ and $U_i$ is an analytic unit on the closure of the image of $\varphi_i$.  Since $\varphi_i$ is bounded and $y_d$ is unconstrained on $B$ and can approach zero, the powers of $y_d$ that occur in each of the components of $\varphi_i$ must all be nonnegative.  So $\lim_{y_d\to 0}u_i(y) = U_i\circ\varphi_i(y_{<d},0)$, which is a strong subanalytic unit on $B'$, where we have extended $\varphi_i$ naturally on the closure of $B$ in $B'\times\RR$.  Lemma \ref{lemma:sliverMonomialLimit} implies that by shrinking $(p,q)$, we can ensure that $\lim_{t_1\to 0} u_i\circ\psi(t,0) = U_i(0)$ uniformly on $(p,q)$.  In particular, $0$ is in the domain of $U$. In summary,
\begin{equation}\label{eq:u}
\lim_{t_1\to 0}\left( \lim_{y_d\to 0} u_i\circ\psi(t,y_d)\right) = U_i(0),
\end{equation}
where the limit $t_1\to 0$ is uniform on $(p,q)$, and where by the definition of strong subanalytic units, $U_i(0)\not =0$.  By construction, the tuples
\[
(\ell_k(i))_{k \in \{2,\ldots,d-1\}\cap K}
\]
are distinct for different $i$ in $I_4$, so
\begin{equation}\label{eq:tpoly}
\sum_{i\in I_4}
    U_i(0)
    \left(\prod_{k\in\{2,\ldots,d-1\}\cap K} t_{k}^{\ell_k(i)}\right)
\end{equation}
is a nonzero polynomial.  Thus by shrinking $(p,q)$, we can ensure that \eqref{eq:tpoly} is bounded away from $0$ on $(p,q)$.

Equations \eqref{eq:I4dominance}-\eqref{eq:u}, and the fact \eqref{eq:tpoly} is bounded away from $0$ on $(p,q)$, show that by shrinking $\epsilon$ we may ensure that for all $t\in (p,q)$, the limit
\begin{equation}\label{eq:Wasymp}
\lim_{y_d\to 0} \frac{\sum_{i\in I} S_i(t,y_d)}{W(t,y_d)}
\end{equation}
exists and is nonzero.  Since the sliver $\im (\psi)$ is open in $B'$, (ii) of the Proposition implies that the function $y_d\mapsto \sum_{i\in I}S_i(t,y_d)$ is integrable for all $t$ in a dense subset of $(0,\epsilon) \times (p,q)$, and thus by \eqref{eq:Wasymp}, the same is true for $y_d\mapsto W(t,y_d)$.  Therefore  $\bar r>-1$ by \eqref{eq:W}, which finishes the proof.
\end{proof}

%

\section{Proofs of Theorem \ref{thm:LRCfamily} and Proposition  \ref{thm:decayRate}}\label{s:LRCfamily_plus}

In this section we prove Theorem \tprime{\ref{thm:LRCfamily}} and Proposition  \ref{thm:decayRate}.

\begin{proof}[Proof of Theorem \tprime{\ref{THM:LRCfamily}}]
Let $f:X\times Y\times\RR \to\RR$, the set $C$, and the variables $x,y,z$ be as in the statement of the Theorem, and assume that $X\subset\RR^k$ and $Y\subset\RR^n$.  We first prove the Theorem assuming $Y = \RR^n$.  Apply Theorem \ref{thm:constrPrep} to $f$.  This gives a subanalytic cell decomposition $\A$ of $X\times\RR^n\times\RR$, and $\A$ induces a subanalytic cell decomposition of $X\times\RR^n$, namely $\B = \{\Pi_{k+n}(A) : A\in\A\}$.  Consider a cell $A\in\A$ which is fat in $y$, let $B = \Pi_{k+n}(A)$, and fix $x\in\Pi_k(A)$.  Since $B_x$ is open in $\RR^n$ and $C_x$ is dense in $\RR^n$, the set $(C\cap B)_x$ is dense in $B_x$.  The function $z\mapsto f(x,y,z)$ is integrable on $A_{(x,y)}$ for all $y\in (C\cap B)_x$, and hence by Proposition \ref{prop:constrPrep}, for all $y\in B_x$.  Therefore if we define
\[
C' = \bigcup\{D\in\B : \text{$D$ is fat in $y$}\},
\]
the set $C'$ is a subanalytic, $C'_x$ is dense in $\RR^n$ for all $x\in X$, and the linearity of integration implies that for all $x\in X = \Pi_k(C')$ and all $y\in C'_x$, the map $z\mapsto f(x,y,z)$ is integrable on $\RR$.  This completes the proof when $Y = \RR^n$.

To finish we need to show how to reduce to the case that $Y = \RR^n$.  Let $\S$ be a stratification of $Y$ into subanalytic cells, and let $\H$ be the highest level strata of $\S$, namely, the set of all $S\in\S$ which are not contained in the boundary of any other member of $\S$.  Thus $\bigcup\H$ is dense and open in $Y$, and for all $x\in X$ and all $S\in\H$, the set $C_x\cap S$ is dense in $S$.  It therefore suffices to fix $S\in\H$, assume that $C_x$ is a dense subset of $S$ for all $x\in X$, and study the restriction of $f$ to $X\times S\times\RR$.  By projecting into a lower dimensional space, we may assume that $S$ is open in $\RR^n$.  Now extend $f$ by $0$ on $X\times(\RR^n\setminus S)\times\RR$.  Fix $x\in X$.  Because $S$ is open in $\RR^n$ and $f(x,y,z) = 0$ for all $y\not\in S$, the function $z\mapsto f(x,y,z)$ is integrable on $\RR$ for all $y$ in dense subset of $\RR^n$ if and only if it is integrable on $\RR$ for all $y$ in a dense subset of $S$.  So we are done by the case that $Y=\RR^n$.
\end{proof}

\begin{lemma}\label{lemma:subanalBound}
For any $f\in\C(X)$, where $X\subset\RR^n$ is subanalytic, there exists a subanalytic function $h:X\to(0,+\infty)$ such that $|f(x)| \leq h(x)$ for all $x\in X$.
\end{lemma}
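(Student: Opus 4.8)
The plan is to bypass the preparation machinery entirely and instead exploit the fact that logarithms grow slowly enough to be dominated by elementary subanalytic expressions. First I would invoke the representation recorded just after Definition~\ref{def:constru}: every $f\in\C(X)$ can be written as
\[
f(x)=\sum_{i} f_i(x)\prod_{j}\log f_{i,j}(x),
\]
with finitely many subanalytic $f_i:X\to\RR$ and subanalytic $f_{i,j}:X\to(0,+\infty)$. Since the class of subanalytic functions is closed under absolute value, reciprocals of positive functions, finite sums, and finite products, it suffices to majorize each factor $|\log f_{i,j}(x)|$ by a subanalytic function.

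The key step is the elementary inequality $|\log t|\le t+t^{-1}$ for all $t>0$, which I would verify in the two cases $t\ge 1$ and $0<t<1$ using $\log s\le s-1$. Applying it pointwise with $t=f_{i,j}(x)$ yields
\[
|\log f_{i,j}(x)|\le f_{i,j}(x)+f_{i,j}(x)^{-1},
\]
whose right-hand side is subanalytic because $f_{i,j}$ is a positive subanalytic function and hence so is its reciprocal.

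Assembling these bounds, I would set
\[
h(x)=1+\sum_i |f_i(x)|\prod_j\bigl(f_{i,j}(x)+f_{i,j}(x)^{-1}\bigr).
\]
The triangle inequality together with the factorwise estimates gives $|f(x)|\le h(x)$ on all of $X$, while $h$ is a finite sum of finite products of subanalytic functions (plus the constant $1$ to force strict positivity), hence subanalytic and valued in $(0,+\infty)$, as required.

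There is essentially no hard obstacle here; the only thing to get right is the choice of a single, exponent-free majorant for the logarithm that works uniformly for $t$ near $0$ and near $+\infty$. One could alternatively route the argument through the cell preparation of Theorem~\ref{thm:constrPrep}, bounding each factor $(\log|\tld{x}_j|)^{\ell_j(i)}$ on a cell by $C\bigl(|\tld{x}_j|^{\epsilon}+|\tld{x}_j|^{-\epsilon}\bigr)$ for a small rational $\epsilon$ and patching the resulting subanalytic bounds over the finitely many cells, but the direct estimate above is cleaner and sidesteps the decomposition altogether.
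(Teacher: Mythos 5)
Your proof is correct and follows essentially the same route as the paper: both start from the representation $f=\sum_i f_i\prod_j\log f_{i,j}$ and majorize each logarithm factor by an explicit subanalytic function of $f_{i,j}$ (the paper uses $L_+(t)=\max(t,1/t)$ where you use $t+t^{-1}$, a negligible difference). The final assembly into a positive subanalytic majorant is the same in both arguments.
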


\begin{proof}
Let $f\in\C(X)$.  Write $f(x) = \sum_i f_i(x) \prod_j \log f_{i,j}(x)$ for subanalytic functions $f_i:X\to\RR$ and $f_{i,j}:X\to(0,+\infty)$, where the indices $i$ and $j$ run over finite index sets.  The function
$L_+:(0,+\infty)\to(0,+\infty)$ defined by
\[
L_+(t) = \begin{cases}
1/t,    & \text{if $0<t\leq 1$},\\
t,      & \text{if $t > 1$},
\end{cases}
\]
satisfies $|\log t|  < L_+(t)$ for all $t>0$.  Therefore for all $x\in X$,
%
%
\[
|f(x)| \leq   \sum_i |f_i(x)| \prod_j L_+(f_{i,j}(x)) \leq h(x)
\]
for the positively-valued subanalytic function
\[
h(x) = \max\left\{1, \sum_i |f_i(x)| \prod_j L_+(f_{i,j}(x))\right\}.
\]
\end{proof}


\begin{proof}[Proof of Proposition \ref{thm:decayRate}]
Let $f\in\C(X\times\RR)$ for a subanalytic set $X\subset\RR^{n-1}$, and suppose that $\lim_{x_n\to +\infty} f(x_{<n},x_n) = 0$ for all $x_{<n}\in X$.  Our goal is to show that there exist a constant $r>0$ and a subanalytic function $g:X\to(0,+\infty)$ such that
\[
|f(x)| \leq x_{n}^{-r}
\]
for all $x_{<n}\in X$ and all $x_n > g(x_{<n})$.  Let $\A$ be the subanalytic cell decomposition of $X\times\RR$ given by applying Theorem \ref{thm:constrPrep} to $f$.  Fix $A'\in\{\Pi_{n-1}(A) : A\in\A\}$, and fix the unique cell $A\in\A$ of the form $A = \{x : x_{<n}\in A', x_n > a(x_{<n})\}$.  The function $f\Restr{A}$ is of the form given by Theorem \ref{thm:constrPrep}, and we shall henceforth use the notation in the statement of Theorem \ref{thm:constrPrep} without redefinition. Remark \ref{rmk:unbddFiberCell} implies
that $\tld{x}_n = x_n$.  As in the proof of Proposition \ref{prop:constrPrep}, let $F = G_{A}^{[\theta]}\circ H:B\to A$, write $y = (y_1,\ldots,y_d)$ for a tuple of variables ranging over $B$, and write $y' = (y_1,\ldots,y_{d-1})$ and $B' = \Pi_{d-1}(B)$.  Note that $F_n(y) = 1/y_d$.  Therefore $\lim_{y_d\to 0^+} f\circ F(y',y_d) = 0$ for all $y'\in B'$, and it suffices to find an $r > 0$ and a subanalytic function $g:B'\to(0,+\infty)$ such that $|f\circ F(y)| \leq y_{d}^{r}$ for all $y\in B$ with $y_d < g(y')$.

For each $i\in I$ let $S_i(y) = T_i\circ F(y)$, so that $f\circ F(y) = \sum_{i\in I}S_i(y)$ on $B$. Note that the function $S_i(y)$ differs from the notation in the proof of Proposition \ref{prop:constrPrep}, because we do not multiply by $\PD{}{F_n}{y_d}$, but it is of the same form as given by \eqref{eq:Sjac}.  We therefore use the notation of the proof of Proposition \ref{prop:constrPrep} without redefinition, and we proceed as in the proof of the proposition until we get to \eqref{eq:Wasymp}.  At this point, note that since $\lim_{y_d\to 0^+} f\circ F(y',y_d) = 0$ for all $y'\in B'$, and since the limit \eqref{eq:Wasymp} exists and is nonzero, $\lim_{y_d\to 0^+} W(t,y_d) = 0$ for all $t\in (0,\epsilon)\times(p,q)$.  Therefore $\bar{r} > 0$ by \eqref{eq:W}.

Fix $\epsilon > 0$ such that $\bar{r} - \epsilon\,\bar{\ell} > 0$, and fix $r$ such that $0 < r < \bar{r} - \epsilon\,\bar{\ell}$.  The number $r$ may be chosen so that $\bar{r} - \epsilon\,\bar{\ell} - r$ is rational.  Note that $|\log y_d|  < y_{d}^{-\epsilon}$ for all sufficiently small $y_d>0$.  Therefore \eqref{eq:Sjac} shows that there exists a $\delta > 0$ such that for all $y\in B$ with $y_d < \delta$,
\[
|f\circ F(y)| \leq \sum_{i\in I} |u_i(y)| \left(\prod_{k\in K\setminus\{d\}} y_{k}^{r_k(i)} |\log y_k|^{\ell_k(i)}\right) y_{d}^{\bar{r} - \epsilon \bar{\ell}}.
\]
Since the functions $|u_i(y)|$ are units, it follows from Lemma \ref{lemma:subanalBound} that there is a subanalytic function $h:B'\to(0,+\infty)$ such that
\[
|f\circ F(y)| \leq h(y') y_{d}^{\bar{r} - \epsilon \bar{\ell}}
\]
on $B\cap(\RR^{d-1}\times(0,\delta))$.  Define a subanalytic function $g:B'\to (0,+\infty)$ by
\[
g(y') = \min\left\{\delta, \,\, h(y')^{-1/(\bar{r} - \epsilon\bar{\ell} - r)}\right\}.
\]
Then $|f\circ F(y)| \leq y_{d}^{r}$ for all $y\in B$ with $y_d < g(y')$.
\end{proof}

%

\section{Proof of Theorem \ref{thm:LRC}}\label{s:LRC}

The presented proof of Theorem \ref{thm:LRC} uses an adapted version of the integration procedure of \cite{LR98}. 
We will describe this procedure in detail since the set-up and generality is different from the one of \cite{LR98}. 
We proceed by induction on $m$, where the integration is performed over $\RR^m$.  The base case of $m=1$  is nontrivial and relies on Theorem \ref{thm:constrPrep} and Proposition \ref{prop:constrPrep}.  In contrast, the induction step will follow rather immediately from the base case by Fubini's theorem and Theorem \tprime{\ref{thm:LRCfamily}}.

\begin{proof}[Proof of Theorem \ref{thm:LRC} for $m=1$]
Let $f\in\C(X\times\RR)$ for a subanalytic set $X\subset\RR^k$.  Our goal is to show that $I_X(f)$ is in $\C(X)$.  We may assume that $\RR\to\RR:y\mapsto f(x,y)$ is integrable on $\RR$ for all $x\in X$.  Apply Theorem \ref{thm:constrPrep} to get a subanalytic cell decomposition of $X\times\RR$ such that on each cell $A$ in the decomposition, $f(x,y)$ can be expressed as a finite sum of terms of the form  \eqref{eq:formT} which, by Proposition \ref{prop:constrPrep}, are integrable over the fiber $A_x$ for all $x\in\Pi_k(A)$.  Because of the linearity of integration, we focus on a single cell $A$ in the decomposition of $X\times\RR$ which is fat in $y$ with base $B$ and on a single term $S(x,y)=T_i(x,y)$ on $A$ of the form \eqref{eq:formT}. It suffices to show that $B\to\RR:x\mapsto\int_{A_x}S(x,y)dy$ is in $\C(B)$. \\

\noindent\emph{Claim.} By partitioning $A$ into smaller subanalytic cylinders (not cells) and by performing subanalytic coordinate transformations, and adjusting $S$ by a Jacobian accordingly, and by the linearity of the integral, we can reduce to the case that $A$ is a cylinder of the form
\begin{equation}\label{eq:Aform}
A = \{(x,y)\in B\times\RR : a(x) < y < b(x)\}
\end{equation}
for subanalytic functions $a,b:B\to\RR$ such that either $0 = a(x) < b(x) \leq \epsilon$ on $B$ or $0 < a(x) < b(x)\leq \epsilon$ on $B$ for some $\epsilon > 0$, and $S$ is of the form
\begin{equation}\label{eq:Sform}
S(x,y) = \left(\sum_{i=1}^{l}S_i(x)y^{-i} + F(S_0(x),y)\right)(\log y)^s
\end{equation}
on $A$, for some $l,s\in\NN$, subanalytic functions $S_1,\ldots,S_l:B\to\RR$ and $S_0:B\to\RR^N$, where $N\in\NN$ and $S_0$ is bounded, and an analytic function $F(X,Y)$ on a neighborhood $V$ of $\cl(S_0(B))\times[-\epsilon,\epsilon]$ which is represented by a power series in $Y$, $\sum_{i=0}^{\infty}F_i(X)Y^i$, which converges on $V$, where $X = (X_1,\ldots,X_N)$ and where $\cl(\cdot)$ denotes the topological closure.   \\

\noindent \emph{Note}: Because $y\mapsto S(x,y)$ is integrable on $A_x$ for all $x\in B$,  necessarily $l = 0$ when $a(x) = 0$ on $B$. \\

We first use the claim to prove the theorem.  Note that
\begin{eqnarray*}
\int_{a(x)}^{b(x)} S(x,y) \,dy
    & = &
    \sum_{i=2}^{l} S_i(x) \int_{a(x)}^{b(x)} y^{-i}(\log y)^s \,dy \\
    & &
    + S_1(x) \int_{a(x)}^{b(x)}\frac{(\log y)^s}{y}\, dy \\
    & &
    + \int_{a(x)}^{b(x)} F(S_0(x),y) (\log y)^s \,dy
\end{eqnarray*}
on $B$.  Clearly,
\[
\int_{a(x)}^{b(x)} \frac{(\log y)^s}{y}\,dy = \left.\frac{1}{s+1}(\log y)^{s+1}\right|_{a(x)}^{b(x)}.
\]
To integrate the other terms, define analytic functions
\begin{eqnarray*}
G(X,Y) & = & \sum_{i=0}^{\infty}\frac{F_i(X)}{i+1}Y^{i+1}, \\
H(X,Y) & = & \sum_{i=0}^{\infty}\frac{F_i(X)}{i+1}Y^i,
\end{eqnarray*}
on $V$, and note that $\PD{}{G}{Y}(X,Y) = F(X,Y)$ and $G(X,Y) = Y H(X,Y)$.  The theorem now follows by inducting on $s$, using the fact that
when $s=0$,
\begin{eqnarray*}
\sum_{i=2}^{l} S_i(x) \int_{a(x)}^{b(x)} y^{-i} \,dy
    & = &
    \sum_{i=2}^{l} \left.\frac{S_i(x)}{-i+1}y^{-i+1}\right|_{a(x)}^{b(x)}, \\
\int_{a(x)}^{b(x)} F(S_0(x),y) \,dy
    & = &
    G(S_0(x),y)\Big|_{a(x)}^{b(x)},
\end{eqnarray*}
and when $s > 0$, integration by parts gives
\begin{eqnarray*}
\int_{a(x)}^{b(x)} y^{-i} (\log y)^s\,dy
    & = &
    \left.\frac{1}{-i+1}y^{-i+1}(\log y)^s\right|_{a(x)}^{b(x)} - \frac{s}{-i+1}\int_{a(x)}^{b(x)} y^{-i}(\log y)^{s-1}dy,
    \\
\int_{a(x)}^{b(x)} F(S_0(x),y) (\log y)^s \,dy
    & = &
    G(S_0(x),y)(\log y)^s\Big|_{a(x)}^{b(x)} - s\int_{a(x)}^{b(x)} H(S_0(x),y) (\log y)^{s-1} \, dy.
\end{eqnarray*}
\hfill\\

We now prove the claim.  We may suppose that $A$ is an $(n+1)$-dimensional $\lambda$-cell with center $\theta$.  By applying the map $G^{[\theta]}_A$ of Definitions \ref{def:Abd}  and adjusting $S$ by a Jacobian, we may assume that $A$ an open subanalytic cell in $(0,1)^{n+1}$ with center $0$ which is of the form
\begin{equation}\label{eq:A}
A = \{(x,y)\in B\times\RR : a(x) < y < b(x)\}
\end{equation}
for subanalytic terms $a$ and $b$ on $B$, where $a(x) < b(x) $ on $B$ and either $a(x) = 0$ on $B$ or $a(x) > 0$ on $B$, and that
\begin{equation}\label{eq:S}
S(x,y) = g(x) y^r (\log y)^s u(x,y)
\end{equation}
on $A$, where $g\in\C(B)$, $r\in\QQ$, $s\in\NN$, and $u(x,y)$ is a strong subanalytic unit on $A$ with center $0$. We are done if $g(x)$ is identically equal to $0$ on $B$, so assume otherwise. By the linearity of the integral we may pull $g(x)$ out of the integral and thus suppose that $g(x)=1$.  By applying Lemma \ref{lemma:strongUnit}, which partitions $B$ into smaller subanalytic sets and thereby partitions $A$ into smaller subanalytic cylinders (not cells), we may assume that $A$ is a subanalytic cylinder of the form \eqref{eq:A}, and that $S(x,y)$ is of the form \eqref{eq:S} but with $u(x,y) = U\circ\varphi(x,y)$, where $\varphi:A\to\RR^{N+2}$ is a bounded function of the form
\[
\varphi(x,y)=(c_1(x),\ldots,c_N(x),c_{N+1}(x)y^{1/p},c_{N+2}(x)y^{-1/p})
\]
for analytic subanalytic terms $c_1,\ldots,c_{N+2}$ on $B$ and a positive integer $p$ such that $pr$ is an integer, and $U:\RR^{N+2}\to\RR$ is an analytic unit on the closure of the image of $\varphi$.  Let $(X,Y,Z) =(X_1,\ldots,X_N,Y,Z)$ denote a tuple of variables ranging over the domain of $U$, and define $c:B\to\RR^N$ by $c(x) = (c_1(x),\ldots,c_N(x))$.
 By further partitioning $B$ we may assume that $c_{N+1}(x) = 0$ on $B$ or that $c_{N+1}(x) \neq 0$ on $B$.  If $c_{N+1}(x) = 0$ on $B$ then apply the change of variables $y\mapsto y^p$, and if $c_{N+1}(x)\neq 0$ on $B$ then apply the change of variables $y\mapsto (y/c_{N+1}(x))^p$.  In either case we may adjust the definition of $S$ by a Jacobian to assume that $r$ is an integer and that
\begin{equation}\label{eq:varphi}
\varphi(x,y) = (c(x),y,d(x)/y)
\end{equation}
for a subanalytic term $d$ on $B$.  By further partitioning $B$ and absorbing sign information of $d(x)$ into $U$, we may assume that $d(x) = 0$ on $B$ or $d(x) > 0$ on $B$.

Let $K$ be the closure of $\{(y,d(x)/y) : (x,y)\in A\}$.  The set $K$ is compact, and for each $(Y_0,Z_0)\in K$ and $\epsilon > 0$, the set $\{(x,y)\in A : |y - Y_0| < \epsilon, |d(x)/y - Z_0| < \epsilon\}$ can be partitioned into finitely many subanalytic cylinders.  So it suffices to fix $(Y_0,Z_0)\in K$, and we may assume that for all $(x,y)\in A$,
\begin{eqnarray}
\label{eq:Y0}
|y - Y_0| & < & \epsilon, \\
\label{eq:Z0}
\left|\frac{d(x)}{y} - Z_0\right| & < & \epsilon,
\end{eqnarray}
where $\epsilon > 0$ can be chosen to be as small as we wish.  Note that if after performing a change of variable of the form $(x,y)\mapsto (x, h(x)(y + y_0))$ for some subanalytic function $h(x)$ and constant $y_0$, we can express $u(x,y)$ in the form $U(c(x),y)$ with $|y|$ small (for a new choice of $U(X,Y)$ and $c(x)$), then $S(x,y)$ will be of the form \eqref{eq:Sform} in these new variables.

If $d(x) = 0$, then we are in this form by performing the change of variables $y\mapsto y + Y_0$.  If $Y_0\neq 0$, then $d(x)/y$ is analytic in $y$, and we are in this form by the same change of variables $y \mapsto y + Y_0$.  So we may assume that $Y_0 = 0$ and $d(x) > 0$ on $B$.

Suppose that $Z_0 \neq 0$.
Perform the change of variables $y \mapsto \frac{d(x)(y+1)}{Z_0}$. Then $d(x)/y$ in the old variables becomes $\frac{Z_0}{1+y}$ in the new variables.  Thus \eqref{eq:Z0} becomes $|\frac{Z_0}{y + 1} - Z_0| < \epsilon$, which means that $|y|$ is small if $\epsilon$ is small.  So $\frac{Z_0}{y+1}$ is analytic in $y$, and so also $U\left(c(x),\frac{d(x)}{Z_0}(y+1),\frac{Z_0}{y+1}\right)$, and we are done.

Now suppose that $Z_0 = 0$.  Define
\[
F(X,Y,Z) =
\begin{cases}
Y^r U(X,Y,Z),       & \text{if $r\geq 0$}, \\
Z^{-r} U(X,Y,Z),    & \text{if $r < 0$}.
\end{cases}
\]
By pulling $d(x)^r$ out of the integral if $r < 0$, we may assume that
\[
S(x,y) = (\log y)^s F(c(x),y,d(x)/y).
\]
Note that $F$ is analytic on the domain of $U$.  We now use Lion and Rolin's ``splitting lemma'', which can be proven by writing $F$ as a doubly-indexed power series in $Y$ and $Z$, say $F(X,Y,Z) = \sum_{(i,j)\in\NN^2} F_{i,j}(X) Y^i Z^j$, and then splitting this sum into the three sums $\sum_{i-j\leq -2}F_{i,j}(X)Y^i Z^j$, $\sum_{i-j = -1}F_{i,j}(X)Y^i Z^j$, and $\sum_{i-j\geq 0}F_{i,j}(X)Y^i Z^j$. \\

\noindent\emph{The Splitting Lemma}:
There exist $\epsilon \in (0,1)$ and functions $F_{\leq -2}$, $F_{-1}$ and $F_{\geq 0}$ which are analytic on $\cl(c(B))\times[-\epsilon,\epsilon]^2$, $\cl(c(B))\times[-\epsilon,\epsilon]$ and $\cl(c(B))\times[-\epsilon,\epsilon]^2$, respectively, such that
\begin{eqnarray*}
F(c(x),y,d(x)/y)
  & = &
    \left(\frac{d(x)}{y}\right)^2
    F_{\leq -2}\left(c(x),d(x),\frac{d(x)}{y}\right) \\
  & &
    + \left(\frac{d(x)}{y}\right) F_{-1}(c(x),d(x)) \\
  & &
    + F_{\geq 0}(c(x),d(x),y)
\end{eqnarray*}
on $\{(x,y)\in A : \text{$|y| < \epsilon$ and $|d(x)/y| < \epsilon$}\}$. \\

To finish, use the splitting lemma to express $S(x,y)$ as the sum of the function
\[
\left(F_{-1}(c(x),d(x))\left(\frac{d(x)}{y}\right) + F_{\geq 0}(c(x),d(x),y)\right)(\log y)^s,
\]
which is in the form $\eqref{eq:Sform}$, and the function
\[
(\log y)^s \left(\frac{d(x)}{y}\right)^2 F_{\leq -2}\left(c(x),d(x), \frac{d(x)}{y}\right),
\]
which can be reduced to the form \eqref{eq:Sform} using the change of variables $y \mapsto d(x)/y$ and adjusting by a Jacobian.
\end{proof}

\begin{proof}[Proof of Theorem \ref{thm:LRC} for a general value of $m$]
Let $m\geq 1$, and let $f\in\C(X\times\RR^m)$ for a subanalytic set $X\subset\RR^k$.  Our goal is to show that $I_X(f)$ is in $\C(X)$.  We may assume that $\RR^m\to\RR:y\mapsto f(x,y)$ is integrable on $\RR^m$ for all $x\in X$.  The case of $m=1$ has been proven, so let $m > 1$.  By Fubini's theorem and Theorem \tprime{\ref{THM:LRCfamily}}, there exists a subanalytic family $\{C_x\}_{x\in X}$ of dense subsets of $\RR^{m-1}$ such that for all $x\in X$ and all $y_{<m}\in C_x$, the function $y_m\mapsto f(x,y_{<m}, y_m)$ is integrable over $\RR$. Replacing $f(x,y)$ with the product of $f$ and the characteristic function of $\{(x,y_{<m}) : y_{<m}\in C_x\}$ does not affect the function $I_X(f)$.  We may therefore assume that $y_m\mapsto f(x,y)$ is integrable on $\RR$ for all $(x,y_{<m})\in X\times\RR^{m-1}$.  The base case of our induction shows that $g = I_{X\times\RR^{m-1}}(f)$ is in $\C(X\times\RR^{m-1})$.  By Fubini's theorem, $y_{<m}\mapsto g(x,y_{<m})$ is integrable over $\RR^{m-1}$ for all $x\in X$ and moreover $I_X(f) = I_X(g)$.  The induction hypothesis then shows that $I_X(g)$ is in $\C(X)$, which completes the proof of Theorem \ref{thm:LRC}.
\end{proof}

\bibliographystyle{amsplain}
\bibliography{bibliotex}

\providecommand{\bysame}{\leavevmode\hbox to3em{\hrulefill}\thinspace}
\providecommand{\MR}{\relax\ifhmode\unskip\space\fi MR }
\providecommand{\MRhref}[2]{%
  \href{http://www.ams.org/mathscinet-getitem?mr=#1}{#2}
}
\providecommand{\href}[2]{#2}
\begin{thebibliography}{10}

\bibitem{AGZV88}
V.~I. Arnol$'$d, S.~M. Guse{\u{\i}}n-Zade, and A.~N. Varchenko,
  \emph{Singularities of differentiable maps. {V}ol. {II}}, Monographs in
  Mathematics, vol.~83, Birkh\"auser Boston Inc., Boston, MA, 1988, Monodromy
  and asymptotics of integrals, Translated from the Russian by Hugh Porteous,
  Translation revised by the authors and James Montaldi.

\bibitem{Barlet82}
D.~Barlet, \emph{D\'eveloppement asymptotique des fonctions obtenues par
  int\'egration sur les fibres}, Invent. Math. \textbf{68} (1982), no.~1,
  129--174.

\bibitem{BM88}
E.~Bierstone and P.~Milman, \emph{Semianalytic and subanalytic sets}, Inst.
  Hautes \'{E}tudes Sci. Publ. Math. \textbf{67} (1988), 29--54.

\bibitem{Ccell}
R.~Cluckers, \emph{Analytic $p$-adic cell decomposition and integrals}, Trans.
  Amer. Math. Soc. \textbf{356} (2004), no.~4, 1489 -- 1499,
  arXiv:math.NT/0206161.

\bibitem{CLR}
R.~Cluckers, L.~Lipshitz, and Z.~Robinson, \emph{Analytic cell decomposition
  and analytic motivic integration}, Ann. Sci. \'{E}cole Norm. Sup. \textbf{39}
  (2006), no.~4, 535--568, arxiv:math.AG/0503722.

\bibitem{CLoes}
R.~Cluckers and F.~Loeser, \emph{Constructible motivic functions and motivic
  integration}, Inventiones Mathematicae \textbf{173} (2008), no.~1, 23--121,
  arxiv:math.AG/0410203.

\bibitem{CluckersMiller3}
R.~Cluckers and D.~J. Miller, \emph{Higher dimensional, subanalytic harmonic
  analysis: decay rates for families of {F}ourier transforms of constructible
  functions}, 2009.

\bibitem{CluckersMiller2}
\bysame, \emph{Multivariate ${L}^p$ properties of globally subanalytic
  functions and their logarithms}, 2009.

\bibitem{CLR2000}
G.~Comte, J.-M. Lion, and J.-P. Rolin, \emph{Nature log-analytique du volume
  des sous-analytiques}, Illinois J. Math. \textbf{44} (2000), no.~4, 884--888.

\bibitem{Denef3}
J.~Denef, \emph{On the evaluation of certain $p$-adic integrals}, Th\'eorie des
  nombres, S\'emin. Delange-Pisot-Poitou 1983--84, vol.~59, 1985, pp.~25--47.

\bibitem{Denef2}
\bysame, \emph{$p$-adic semialgebraic sets and cell decomposition}, Journal
  f{\"u}r die reine und angewandte Mathematik \textbf{369} (1986), 154--166.

\bibitem{Denef1}
\bysame, \emph{Arithmetic and geometric applications of quantifier elimination
  for valued fields}, MSRI Publications, vol.~39, pp.~173--198, Cambridge
  University Press, 2000.

\bibitem{DvdD}
J.~Denef and L.~van~den Dries, \emph{$p$-adic and real subanalytic sets},
  Annals of Math. \textbf{128} (1988), 79--138.

\bibitem{Igusa3}
J.~Igusa, \emph{Lectures on forms of higher degree (notes by {S}. {R}aghavan)},
  Lectures on mathematics and physics, Tata institute of fundamental research,
  vol.~59, Springer-Verlag, 1978.

\bibitem{KontZag}
Maxim Kontsevich and Don Zagier, \emph{Mathematics unlimited---2001 and
  beyond}, ch.~Periods, pp.~771--808, Springer, 2001.

\bibitem{Kulikov98}
Valentine~S. Kulikov, \emph{Mixed {H}odge structures and singularities},
  Cambridge Tracts in Mathematics, vol. 132, Cambridge University Press,
  Cambridge, 1998.

\bibitem{LR97}
J.-M. Lion and J.-P. Rolin, \emph{Th\'{e}or\`{e}me de pr\'{e}paration pour les
  fonctions logarithmico-exponentielles}, Ann. Inst. Fourier (Grenoble)
  \textbf{47} (1997), 859--884 (French).

\bibitem{LR98}
\bysame, \emph{Int\'{e}gration des fonctions sous-analytiques et volumes des
  sous-ensembles sous-anaytiques}, Ann. Inst. Fourier (Grenoble) \textbf{48}
  (1998), 755--767 (French).

\bibitem{Loeser86}
Fran{\c{c}}ois Loeser, \emph{Volume de tubes autour de singularit\'es}, Duke
  Math. J. \textbf{53} (1986), no.~2, 443--455.

\bibitem{Mal74}
Bernard Malgrange, \emph{Int\'egrales asymptotiques et monodromie}, Ann. Sci.
  \'Ecole Norm. Sup. (4) \textbf{7} (1974), 405--430 (1975).

\bibitem{DJMprep}
D.~J. Miller, \emph{A preparation theorem for {W}eierstrass systems}, Trans.
  Amer. Math. Soc. \textbf{358} (2006), no.~10, 4395--4439.

\bibitem{SpeissPfaff}
P.~Speissegger, \emph{The pfaffian closure of an o-minimal structure}, J. Reine
  Angew. Math. \textbf{508} (1999), 189--211.

\bibitem{Stein}
Elias~M. Stein, \emph{Harmonic analysis: real-variable methods, orthogonality,
  and oscillatory integrals}, Princeton Mathematical Series, vol.~43, Princeton
  University Press, Princeton, NJ, 1993, With the assistance of Timothy S.
  Murphy, Monographs in Harmonic Analysis, III.

\bibitem{vdDcM}
L.~van~den Dries and C.~Miller, \emph{Geometric categories and o-minimal
  structures}, Duke Math. J. \textbf{84} (1996), no.~2, 497--540.

\end{thebibliography}
\end{document}